\documentclass[leqno]{amsart}
\usepackage[margin=1in]{geometry}
\usepackage{amsmath, amsthm, amssymb}
\usepackage{hyperref}
\usepackage{multicol}
\hypersetup{colorlinks=true, pdfstartview=FitV, linkcolor=blue, citecolor=blue, urlcolor=blue}
\usepackage{times}
\usepackage{comment}
\usepackage[usenames,dvipsnames]{color}
\usepackage{bbm}
\usepackage{subcaption}
\usepackage{mathtools}
\usepackage{pdfpages}
\usepackage{bm}
\usepackage{verbatim}
\usepackage{mathabx}
\usepackage{geometry}
\usepackage{enumerate} % Fancy enumeration lists 
\geometry{a4paper} % heightrounded, %\usepackage{cancel}

%\usepackage{mathpazo}
%%%%%%%%%%%%%%%%%%%%%%%%%%%%%%%%%%%%%%%%%%%%%%%%

%%%%%%%%%%%%%%%%% theorems %%%%%%%%%%%%%%%%%%%%%
%\theoremstyle{definition}
%\newtheorem*{rmk}{Remark}%[section]
%\theoremstyle{plain}
\newtheorem{thm}{Theorem}[section]%[chapter]%[section]%!!the counter really doesn't follow the chapters!!
\newtheorem{Prop}[thm]{Proposition}
\newtheorem{lem}[thm]{Lemma}
\newtheorem{Cor}[thm]{Corollary}
%[section]
 \newtheorem{Thm}{Theorem}[section]
 \newtheorem{Rmk}[thm]{Remark}
 \newtheorem{Lem}[thm]{Lemma}

% \newtheorem{Lemma}{Lemma}
% \newtheorem{Def}[thm]{Definition}
%%%%%%%%%%%%%%%%%%%%%%%%%%%%%%%%%%%%%%%%%%%%%%%%
%\newenvironment{proof}{\noindent{\it Proof.--}\begin{quotation}\noindent }{\end{quotation}\hfill$\square $}
%%%%%%%%%%%%%%%% macros Laure %%%%%%%%%%%%%%%%%%
%\def\epsqdefa{\buildrel\hbox{\footnotesize def}\over =}
%\def\sumetage#1#2{
%\sum_{\scriptstyle {#1}\atop\scriptstyle {#2}} }
%\def\prodetage#1#2{
%\prod_{\scriptstyle {#1}\atop\scriptstyle {#2}} }

 \def\N {\mathbb{N}}
\def\R {\mathbb{R}}

\def\cH {\mathcal{H}}
\def\cG {\mathcal{G}}
\def\cL {\mathcal{L}}

\def\cS {\mathcal{S}}

\def\cT {\mathcal{T}}
\def\cW {\mathcal{W}}

\def\bx {{\textbf{x}}}
\def\aa {\mathrm{axi}}

\def\de {{\partial}}
\def\eps {{\varepsilon}}

\newcommand{\eul}{\mathrm{e}}

\newcommand{\Div}{\operatorname{div}}

\newcommand{\supp}{\operatorname{supp}}

\newcommand{\ba}{\begin{aligned}}
\newcommand{\ea}{\end{aligned}}
\newcommand{\be}{\begin{equation}}
\newcommand{\ee}{\end{equation}}
\newcommand{\bu}{\textbf{u}}

\newcommand{\wt}{\widetilde}

\newcommand{\red}{\textcolor{red}}
%%%%% MACROS
\newcommand{\Ps}{\Psi_{\mathrm{app}}}
\newcommand{\Psa}{\Psi_{2}}
\newcommand{\hE}{\Psi_{\mathrm{err}}}

\newcommand{\Ome}{\Omega_{\mathrm{app}}}
\newcommand{\et}{\eta_{\mathrm{app}}}
\newcommand{\fe}{f_{\mathrm{app}}}
\newcommand{\csi}{\xi_{\mathrm{app}}}
%%%%%%%%%%%%%%%%%%%%%%%%%%%%%%%%%%%%%%%%%%%%%%%%

%%%%%%%%%%%%%%%% macros Diogo %%%%%%%%%%%%%%%%%%
%\newcommand{\set}[2]{\left\{#1\,:\,#2\right\}}
%\newcommand{\ip}[1]{\left\langle#1\right\rangle}
%\newcommand{\iip}[1]{\left\langle\!\!\left\langle#1\right\rangle\!\!\right\rangle}
%%%%%%%%%%%%%%%%%%%%%%%%%%%%%%%%%%%%%%%%%%%%%%%%

%%%%%%%%%%%%% numbering equations %%%%%%%%%%%%%%
\numberwithin{equation}{section}
%%%%%%%%%%%%%%%%%%%%%%%%%%%%%%%%%%%%%%%%%%%%%%%%

%%%%%%%%%%%%%%%%%%%%%%%%%%%%%%%%%%%%%%%%%%%%%%%%
\begin{document}%%%%%%%%%%%%%%%%%%%%%%%%%%%%%%%%
%%%%%%%%%%%%%%%%%%%%%%%%%%%%%%%%%%%%%%%%%%%%%%%%

% Vous pouvez mettre dans la prochain ligne la rubrique choisie
% (si vous la connaissez) - meme deux, format : Rubrique1/Rubrique2

\title[Strong ill-posedness in $L^{\infty}$ of 2D Boussinesq in vorticity form and 3D axisymmetric Euler]{Strong ill-posedness in $L^{\infty}$ of the 2D Boussinesq equations in vorticity form and application to the 3D axisymmetric Euler Equations}\author[R. Bianchini]{Roberta Bianchini}
\address{IAC, Consiglio Nazionale delle Ricerche, 00185 Rome, Italy.}
\email{roberta.bianchini@cnr.it}
\author[L.E. Hientzsch]{Lars Eric Hientzsch}
\address{Fakult\"at f\"ur Mathematik, Universit\"at Bielefeld, 33501 Bielefeld, Germany.}
\email{lhientzsch@math.uni-bielefeld.de}
\author[F. Iandoli]{Felice Iandoli}
\address{DEMACS, Università della Calabria, 87035 Rende, Italy.}
\email{felice.iandoli@unical.it}
\date\today
 
%\null
\maketitle 
\begin{abstract}
We prove the strong ill-posedness of the two-dimensional Boussinesq system in vorticity form in $L^{\infty}(\R^2)$ without boundary, building upon the method that Shikh Khalil \& Elgindi arXiv:2207.04556v1 developed for scalar equations. We provide examples of initial data with vorticity and density gradient of small $L^{\infty}(\R^2)$ size, for which the horizontal density gradient $\de_x \rho$ has a strong $L^{\infty}(\R^2)$-norm inflation in infinitesimal time, while the vorticity and the vertical density gradient remain bounded. Furthermore, exploiting the three-dimensional version of Elgindi's decomposition of the Biot-Savart law, we apply our method to the three-dimensional axisymmetric Euler equations with swirl and away from the vertical axis, showing that a large class of initial data with vorticity uniformly bounded and small in $L^{\infty}(\R^2)$ provides a solution whose gradient of the swirl has a strong $L^{\infty}(\R^2)$-norm inflation in infinitesimal time. The norm inflation is quantified from below by an explicit lower bound which depends on time, the size of the data and is valid for small times.
\end{abstract}
%\tableofcontents

\section{Introduction and Result}
The two-dimensional Boussinesq equations in $(x,y) \in \R^2$, in vorticity $\omega(t,x,y): [0, \infty) \times \R^2 \to \R $ and density $\rho(t,x,y): [0, \infty) \times \R^2 \to  \R_+$ formulation, read as follows
%%%%%
\begin{align}\label{eq:2Dbouss}
\de_t \omega + \bu \cdot \nabla \omega & = \de_x \rho, \notag\\
\de_t \rho + \bu \cdot \nabla \rho&= 0, 
\end{align}
%%%%%
where $\bu (t,x,y)=(u_1 (t,x,y), u_2 (t,x,y)) \in \R^2$ is a divergence-free velocity field that can be expressed in terms of a stream-function as $\bu=\nabla^\perp \psi$, where $\nabla^\perp=(-\de_y, \de_x)^T$ and 
\be\label{eq:streamfun}
-\Delta \psi = \omega. 
\ee
This system is obtained from the two-dimensional incompressible Euler equations for non-homogeneous fluids, after applying the so-called \emph{Boussinesq approximation} (\cite{long1965}), which neglects density variations everywhere but in gravity terms, see \cite{lannes, bianchini} for further details. Besides being very commonly used for applications in oceanography and atmospheric sciences \cite{rieutord}, the 2D Boussinesq system \eqref{eq:2Dbouss} attracted the interest of the mathematical community for several decades and the question whether finite-time blow-up of solutions emanated from finite-energy, smooth initial data can occur is an outstanding open problem (see \cite{yudovich}). 
In the presence of a boundary, several impressive contributions have been very recently made in this direction, among which Chen \& Hou \cite{chen2022} showed the blow-up of self-similar solutions by means of computer-assisted proofs and physics-informed neural networks have been used to construct approximated self-similar blow-up profiles in \cite{wang2022}. The result in \cite{chen2022} also includes the finite-time blow-up for smooth data for the 3D axisymmetric Euler equations based on an analogy between such equations in potential vorticity $\frac{\omega_\theta}{r}$ formulation where $\bu^\aa=(u_r^\aa,u_z^\aa)^T$ and the 2D Boussinesq system in vorticity $\omega$ and density gradient $\nabla \rho$, which is very well described for instance in \cite[Section 1.2]{jeong}. We report here this analogy for completeness (for explanations on the 3D axisymmetric Euler equations see Section \ref{sec:euler}) :
\vspace{0.8cm}
\begin{multicols}{2}
\noindent
\be\label{eq:2Dbouss-origianal-gradient}
\ba
D_t \omega &= \de_x \rho, \\
D_t (\de_x\rho) &= - \de_x \bu \cdot \nabla_{x,y} \rho, \\
D_t (\de_y \rho)&=  - \de_y \bu \cdot \nabla_{x,y} \rho,\\
D_t:&=\de_t + \bu \cdot \nabla_{x,y},
\ea
\ee
\be\label{eq:3Daxisymm-intro}
\ba
\wt D_t \left({\omega_\theta^\aa}/{r}\right)  &= - {r^{-4}} \de_z ((r u_\theta^\aa)^2), \notag\\
\wt D_t \de_r (r u_\theta^\aa) &=- \de_r \bu^\aa \cdot \nabla_{r, z} (r u_\theta^\aa),  \notag\\
\wt D_t  \de_z(r  u_\theta^\aa) &=- \de_z \bu^\aa \cdot \nabla_{r, z} (r u_\theta^\aa), \\
\wt D_t :&= \de_t + \bu^\aa \cdot \nabla_{r, z}.
\ea\ee
\end{multicols}
Very far from being exhaustive, regarding the outstanding open question on finite-time blow-up of smooth solutions to the 3D Euler equations we only mention the first rigorous result for $C^{1, \alpha}$ solutions by Elgindi in \cite{tarek1}, the analogous result with boundary by Chen \& Hou \cite{chen1} and the aforementioned work \cite{chen2022}. 
Besides the blow-up scenario, another important question concerning the 2D Boussinesq equations is the small-scale formation and the growth of Sobolev norms in infinite time for large classes of initial data, see the work \cite{kiselev2022} and references therein for recent developments in this direction. 

The 2D Boussinesq system \eqref{eq:2Dbouss} is of significant interest in (mathematical) fluid dynamics for the aforementioned reasons. The so-called \emph{stratified} steady solutions $(\bar \omega_{\text{eq}}, \bar \rho_{\text{eq}})=(0, \bar \rho_{\text{eq}}(y))$, where $\bar \rho_{\text{eq}}(y)$ is a continuous function of the vertical coordinate $y$ only, are of particular relevance for applications in oceanography, see for instance \cite{DY1999, rieutord}. In that context, it is customary to consider a background stratification $\bar \rho_{\text{eq}}(y)$ that decreases with height $\bar \rho_{\text{eq}}'(y)<0$, so that the steady state is called \emph{stably stratified} since the linear operator associated with the linearization of Eqs. \eqref{eq:2Dbouss} around such steady state exhibits spectral stability (see for instance \cite{gallay, lannes}).
Let us consider the perturbations, modeled by system \eqref{eq:2Dbouss}, of the simplest stably stratified density profile, namely $\bar \rho_{\text{eq}}(y) = \bar \rho_c - y$, where $\bar \rho_c >0$ is a constant averaged density. Being \eqref{eq:2Dbouss} invariant under translation of $\rho$, we consider for convenience $\bar \rho_{\text{eq}}(y) = -y$. We then introduce the perturbed variables $\wt \omega$ and $\wt \rho=  y+ \rho $ and write the equations satisfied by them below:
%%%%%
\begin{align}\label{eq:2Dbouss-stable}
\de_t \wt\omega + \wt \bu \cdot  \nabla \wt \omega & = \de_x \wt \rho, \notag\\
\de_t \wt \rho + \wt \bu \cdot  \nabla \wt\rho&= \wt u_2,\\
\wt \bu &=(\wt u_1, \wt u_2)= (-\Delta)^{-1} \nabla^\perp \wt \omega.\notag
\end{align}
 %%%%%
Within the framework of a spectrally stable regime (under the assumption $\bar \rho_{\text{eq}}'(y)<0$), one might expect solutions to \eqref{eq:2Dbouss-stable} to exhibit better behavior than those of the original 2D Boussinesq equations \eqref{eq:2Dbouss}. However, even under such a spectrally stable situation, the question of finite-time blow-up versus global regularity for smooth solutions remains open. To our knowledge, the best available result in this context is due to Elgindi \& Widmayer \cite{klaus1}, which provides existence of Sobolev solutions to system \eqref{eq:2Dbouss-stable} for initial data of size $\eps$ up to times of order $t \sim \eps^{-4/3}.$
%The method leading to the recent global existence result in \cite{klaus2}, namely exploiting the anisotropy and the null structure of the nonlinear term, should allow one to enlarge the existence time-scale for the 2D Boussinesq system \eqref{eq:2Dbouss-stable}.
If solutions to \eqref{eq:2Dbouss} around a stably stratified state with a background stratification $\bar \rho_{\mathrm{eq}}(y)=-y$ and the Couette flow $\bar \bu_{\mathrm{eq}}=(y,0)^T$, rather than the trivial flow above, are considered, then $\varepsilon$-perturbations of the velocity and density have been proved to decay up to times of order $t \sim \varepsilon^{-2}$. However, it is important to note that, despite the decay of $\varepsilon$-perturbations, the vorticity and density gradient display an algebraic growth in time, leading to interesting long-term behavior (see \cite{bianchini1, bianchini2}).
We will show that the spectral stability of the Boussinesq system around stable stratification \eqref{eq:2Dbouss-stable} is insufficient to rule out norm-inflation at $L^\infty$-regularity for the density gradient $\nabla\rho$ and the vorticity $\omega$. More precisely, we prove that the 2D Boussinesq equations around the \emph{stably} stratified steady state $\bar \rho_{\text{eq}}(y) = - y$ in vorticity form are \emph{strongly ill-posed} in $L^{\infty}(\R^2)$. Moreover, we infer the same ill-posedness result for the original 2D Boussinesq equations \eqref{eq:2Dbouss}. Before describing the results of this paper, we write system \eqref{eq:2Dbouss} in a more convenient form. 
Dropping the \emph{tilde} for lightening the notation and taking the gradient $\nabla=(\de_x, \de_y)^T$ of the equation for $\rho$ in \eqref{eq:2Dbouss-stable}, we write the system as follows:
%%%%%
\be\label{eq:2Dbouss-grad}
\ba\de_t \omega+\bu \cdot \nabla \omega &= \de_x \rho, \\
\de_t (\de_x\rho) + \bu \cdot \nabla (\de_x \rho) &= \de_x u_2 - \de_x u_1 \de_x \rho - \de_x u_2 \de_y \rho, \\
\de_t (\de_y \rho) + \bu \cdot \nabla (\de_y \rho) &= \de_y u_2 - \de_y u_1 \de_x \rho - \de_y u_2 \de_y \rho,\\
\bu&= (-\Delta)^{-1}\nabla^\perp \omega = (-\Delta)^{-1}(-\de_y, \de_x)^T \omega.
\ea
\ee
%%%%%
As in \cite{tarek3}, our proof of strong ill-posedness relies on an approximation of \eqref{eq:2Dbouss-grad}, under a certain scaling and in terms of a small parameter $0<\alpha \ll 1$ (see \eqref{eq:polar-coord}), from which one can extract a Leading Order Model (in terms of the approximation parameter $\alpha$) that drives the ill-posedness mechanism. Interestingly, our Leading Order Model (see \eqref{eq:leading1}) discards the ``stabilizing'' contribution $u_2$ in the right-hand side of the equation for $\wt \rho$ in \eqref{eq:2Dbouss-stable}, being indeed $u_2$ a lower order term in $\alpha$ (see Remark \ref{rmk:originalB}). 
\begin{comment}
However, the stable steady state $\bar \rho_{\mathrm{eq}}(y)$ around which \eqref{eq:2Dbouss-stable} is linearized is not transparent in our Leading Order Model. In fact, although being lower order in $\alpha$, the equation for $\de_y \rho$ in \eqref{eq:2Dbouss-grad} cannot be completely discarded from our Leading Order Model because of the linear term $\de_y u_2$ in the right-hand side. Therefore the stable steady state $\bar \rho_{\mathrm{eq}}$ marks a difference between the original Boussinesq equations \eqref{eq:2Dbouss} and their (spectrally) stable version \eqref{eq:2Dbouss-stable}. However, the ill-posedness of \eqref{eq:2Dbouss-stable} for small vorticity and density gradient data in $L^\infty(\R^2)$ in Theorem \ref{thm:main} can be essentially reformulated (see Theorem \ref{thm:main2}) as a result of strong ill-posedness of the original 2D Boussinesq equations \eqref{eq:2Dbouss} for data that are close to the stable steady state $\bar \rho_{\mathrm{eq}}$.
\end{comment}

The key term leading to the $L^{\infty}$ norm inflation comes from the nonlinearity and it is precisely $-\de_x u_1 \de_x \rho$ in the right-hand side of system \eqref{eq:2Dbouss-grad}. Recalling that $\de_x u_1$ can be expressed as $-\de_{xy} (-\Delta)^{-1} \omega=R_1 R_2\omega$, where $(R_1, R_2)$ is the two-dimensional Riesz Transform (see for instance \cite{grafakos}), our work confirms that the unboundedness of the Riesz Transform as an operator on $L^\infty$ is a critical factor leading to strong ill-posedness. This fact was already exploited by Elgindi \& Masmoudi \cite{tarek3} to prove the \emph{mild} ill-posedness in $L^{\infty}$ of the stably stratified Boussinesq system \eqref{eq:2Dbouss-stable}
in vorticity form, meaning that there exist initial data $\|f_0\|_{L^{\infty}} \le \delta \ll 1,$ such that $\| f(t) \|_{L^{\infty}} > c$ for some fixed constant $c>0$ and some small time $t_\delta$ (see \cite{tarek3} for a more detailed definition). 
It is crucial to emphasize that, although the unboundedness of the Riesz transform in $L^\infty$ also plays a crucial role in this article, the \textbf{nonlinear} mechanism leading to our \textbf{strong} ill-posedness significantly differs from the \textbf{linear} mechanism leading to the \textbf{mild} ill-posedness by Elgindi \& Masmoudi. The latter approach relies on a rigorous semigroup expansion $\exp({R_1 t}) \sim \text{I}+R_1 t$ for small times and does not extend beyond mild ill-posedness due to the perturbative treatment of the nonlinear term (requiring uniform boundedness).
In particular, the semigroup $\exp({R_1 t})$ is associated with the linear part of the system \eqref{eq:2Dbouss-grad}, i.e.,
\begin{align}
\de_t \omega &= \de_x \rho, \notag\\
\de_t (\de_x \rho) &= \de_x u_2 = \de_{xx} (-\Delta)^{-1} \omega = R_1^2 \omega, \label{eq:2Dbouss-grad-lin}
\end{align}
which forms a wave-like system. The oscillations are precisely given by the symbol of $\pm R_1$, namely $\pm \widehat R_1 = \pm i \frac{\xi_1}{|\xi|}$, where $\xi=(\xi_1, \xi_2)$ is the Fourier frequency. As previously mentioned, the small-time expansion of this semigroup roughly leads to the mechanism causing mild ill-posedness in \cite{tarek3}.
In contrast, our proof, building upon the method in \cite{tarek2}, is rooted in the analysis of a \textbf{\emph{nonlinear toy model}} for \eqref{eq:2Dbouss-grad}, \textbf{\emph{considering all of the leading order}} (linear and nonlinear) \textbf{\emph{effects}}. This way, it  \emph{\textbf{involves a crucial interplay between linear and nonlinear terms for norm inflation}}.
It is essential to note that not all linear terms in \eqref{eq:2Dbouss-grad} are leading order. In other words, our toy model, presented below in \eqref{eq:leading1}, does not contain all terms of the linear system \eqref{eq:2Dbouss-grad-lin}. Specifically, $\de_x u_2$ in the right-hand side of the second equation of \eqref{eq:2Dbouss-grad} is neglected as it is considered lower order. This observation demonstrates that the instability mechanism is driven by both linear and nonlinear leading order terms, making it substantially different from the norm growth provided by \cite{tarek3}.
Exploiting the structure of the nonlinearity of system \eqref{eq:2Dbouss-stable}, we prove that there are initial data data  $\|\omega_0\|_{L^{\infty}}+\|\nabla \rho_0\|_{L^{\infty}} \leq \delta \ll 1,$ with $\delta>0$ arbitrarily small such that $\sup_{t \in [0, \delta]} \|\omega (t)\|_{L^{\infty}}+\|\nabla \rho (t) \|_{L^{\infty}}  \ge \delta^{-1}$ (strong ill-posedness). 
%Observe that the \emph{linear} lower bound of $\|\omega (t)\|_{L^{\infty}}+\|\nabla \rho (t) \|_{L^{\infty}}$ by Elgindi \& Masmoudi in \cite{tarek3} is of order $1/\alpha$, while our \emph{nonlinear} lower bound is of order $\log |\log \alpha | $ (see Theorem \ref{thm:main}-\ref{thm:main2}). This means that the nonlinear term of system \eqref{eq:2Dbouss-stable} weakens the effect of the linear part. Such an observation was already made by Elgindi \& Shikh Khalil \cite{tarek2} in the case of scalar equations with a Riesz-type linear operator. Here, we prove this fact for the 2D Boussinesq system providing to the best of our knowledge such a result for a system of equations for the first time. Moreover, exploiting the analogy between the 2D Boussinesq system and the 3D axisymmetric Euler equations in potential vorticity formulation \eqref{eq:2Dbouss-origianal-gradient}, we apply our method to the latter. More specifically, we provide in this paper a result of $W^{1, \infty}$ strong ill-posedness of the 3D axisymmetric Euler equations with swirl (Theorem \ref{thm:main3}). 
To the best of our knowledge, our results on the strong ill-posedness of the 2D Boussinesq equations in Theorem \ref{thm:main} and Theorem \ref{thm:main2} are original. 
\medskip

Our second result concerns the 3D  axisymmetric Euler equations.
The strong ill-posedness of the 3D axisymmetric Euler equations was first proven by Bourgain \& Li \cite{bourgain2015}. However, we emphasize that our Theorem \ref{thm:main3} significantly differs from \cite{bourgain2015}, where the ill-posedness arises in the 3D axisymmetric Euler equations \emph{without} swirl. In contrast, our study focuses on the 3D axisymmetric Euler equations \emph{with swirl} and provides $L^{\infty}$ norm inflation of the \emph{gradient of the swirl} at infini\-tesimal times.
%%%%%

\medskip

Finally, an important point is that our results hold in the whole $\R^2$ space: this is in sharp contrast to the blow-up results by Chen and Hou \cite{chen1}, in which the boundary plays a crucial role.
\medskip

On a technical level, is important to underline that several key steps of our proof, such as the $\alpha$-scaling of the radial coordinate $r=\sqrt{x^2+y^2}=R^{1/\alpha}$ and the decomposition of the Biot-Savart law (Theorem \ref{thm:tarek}), follow Elgindi's ideas \cite{tarek1, tarek2}. A detailed comparison with the inspiring works on ill-posedness by Elgindi et al \cite{tarek2, tarek3} and finite-time blow-up of solutions to the 2D Boussinesq system with boundary by Chen \& Hou \cite{chen2022} is provided below, as well as further comments on the ill-posedness of the 3D Euler equations without swirl by Bourgain \& Li \cite{bourgain2015}.% and Elgindi \& Masmoudi \cite{tarek3}.

We prove our results on the 2D Boussinesq equations, while the application to the 3D Euler equations is confined to Section \ref{sec:euler}.
Hereafter, we  work in $\R^2$ in polar coordinates, with the radial scaling as in \eqref{eq:scaling}.
We further define the new variables depending on $(R, \beta) \in [0, \infty)\times [0, 2\pi]$ as follows 
\be\label{eq:new-var}\ba
\Omega=\Omega (R, \beta):=\omega(x,y), \qquad P=P(R, \beta):= \rho(x,y). 
\ea\ee

%%%%%
Before going further, we introduce our notation and convention below.
\subsection*{Notation and convention}\label{sec:notation}
\begin{itemize}
\item For the 2D Boussinesq equations, namely up to Section \ref{sec:last}, we work in the polar coordinates $(R,\beta)$ where $R=r^{\alpha}$ with $r=\sqrt{x^2+y^2}$, $\alpha>0$ small and $\beta=\arctan(y/x)$. In Section \ref{sec:euler}, we introduce new coordinates tailored to the 3D setting with axial symmetry which are used in the respective section of the paper only.
\item For $1\leq p \leq \infty$, the $L^p$ space with respect to the variables $(R,\beta)$, namely the space $L^p([0,\infty)\times [0,2\pi])$, is defined with respect to the measure $\mathrm{d}R\mathrm{d}\beta$ - unlike the common definition of $L^2(\R^2)$ in polar coordinates. If not stated otherwise, $L^p$ indicates  $L^p([0,\infty)\times [0,2\pi])$.  
\item The $L^2$-based Sobolev space $H^k=H^k([0,\infty)\times [0,2\pi])$ with $k\in \N_{0}$ and norm
\begin{equation*}
    \|f\|_{H^k}=\sum_{0\leq|\gamma|\leq k}\|\partial^{\gamma}u\|_{L^2},
\end{equation*}
where $\gamma$ indicates the $2D$-multi-index. We shall use the notation $\partial^n$ to indicate $\partial^\gamma$ with $|\gamma|=n$.
\item Following \cite{tarek2}, we introduce the weighted (non)-homogeneous Sobolev spaces $\dot{\cH}^k([0,\infty)\times [0,2\pi])$ and $\cH^k([0,\infty)\times [0,2\pi])$ respectively with norms
\begin{equation}\label{def:Hk}
\|f\|_{\dot{\cH}^m}=\sum_{i=0}^{m}\left(\|\partial_R^{i}\partial_\beta^{m-i}f\|_{L^2}+\|R^i\partial_R^{i}\partial_\beta^{m-i}f\|_{L^2}\right), \qquad \|f\|_{\cH^k}=\sum_{m=0}^k\|f\|_{\dot{\cH}^m}.
\end{equation}
Similarly, we denote for $k\in \N_{0}$ the spaces $\dot{\cW}^{m,\infty}$ and $\cW^{k,\infty}$ respectively with norms
\begin{equation}\label{def:Wk}      \|f\|_{\dot{\cW}^{m, \infty}}=\sum_{i=0}^{m}\left(\|\partial_R^{i}\partial_\beta^{m-i}f\|_{L^\infty}+\|R^i\partial_R^{i}\partial_\beta^{m-i}f\|_{L^\infty}\right), \qquad \|f\|_{\cW^{k, \infty}}=\sum_{m=0}^k\|f\|_{\dot{\cW}^m}.
\end{equation}
\item We use the notation $L_R^2, H_R^1, H_R^k$ to denote $L^2([0, \infty)\times [0,2\pi]), \cH^1([0, \infty)\times [0,2\pi]), \cH^k([0, \infty)\times [0,2\pi]), L^\infty([0, \infty)\times [0,2\pi])$ norms of radial functions.
\item We adopt the notation 
\be\label{eq:size-initial}
C_k=C_k ( \|\Omega_0\|_{\cH^k}, \|P_0\|_{\cH^k})
\ee
to denote a general constant, which may change from line to line, depending on the $\cH^k([0,\infty]\times [0, 2\pi])$ (in $(R, \beta)$ coordinates) norm of the initial data $(\omega_0(x,y), \rho_0(x,y))=(\Omega_0(R, \beta), P_0(R, \beta))$ and on the space dimension. 
\item The symbol $\lesssim $ (resp. $\gtrsim $) denotes $\le M $ (resp. $ \ge M $) for some constant $M>0$ and independent of the small parameter $\alpha$. We shall write $A\sim B$ when we have $A\lesssim B$ and $B\lesssim A$.
\item We introduce the notation $\mathbb{P}_n, \, n=0,1,2$ to denote the projection onto $n$-modes: for any function $f(\beta)$ with $\beta \in [0, 2\pi]$, 
\begin{align}\label{def:projector}
    \mathbb{P}_n (f):=\frac{1}{\pi} \left(\sin(n\beta) \int_{0}^{2\pi} f(\beta) \sin(n\beta) \, d\beta + \cos(n\beta) \int_{0}^{2\pi} f(\beta) \cos(n\beta) \, d\beta\right). 
\end{align}

\end{itemize}
%%%%%
\subsection*{Main Results}

Our results read as follows. The first theorem quantifies the norm inflation of the horizontal density gradient $\de_x\rho$ in terms of the small parameter $\alpha$.

\begin{Thm}[$L^{\infty}$ strong ill-posedness for the stably stratified Boussinesq system \eqref{eq:2Dbouss-stable}]
\label{thm:main}
There exists $0<\alpha_0 \ll 1$
such that for any $0<\alpha<\alpha_0$ and any $\delta>0$
there exist initial data $$(\omega_0^{\alpha, \delta}(x,y), \rho_0^{\alpha, \delta}(x, y))=(\Omega_0^{\alpha, \delta}(R, \beta), P_0^{\alpha, \delta}(R, \beta))$$ of the following form 
\begin{align}
    \Omega_0^{\alpha, \delta} (R, \beta) = \bar g_0^{\alpha, \delta} (R) \sin (2\beta), \qquad P_0^{\alpha, \delta} (R, \beta)= R^{1/\alpha} \bar \eta_0^{\alpha, \delta} (R) \cos \beta,
\end{align}
where $\bar g_0^{\alpha, \delta} (R),\bar \eta_0^{\alpha, \delta} (R) \in C_c^\infty([5/8, \infty))$ 
and 
\be\ba
\|(\omega_0^{\alpha, \delta}, \de_x \rho_0^{\alpha, \delta}, \de_y \rho_0^{\alpha, \delta})\|_{L^{\infty}(\R^2)} &=  \delta,\\
\ea \ee
such that the unique solution $\omega^{\alpha, \delta} (t,x,y), \rho^{\alpha, \delta}(t, x, y)$ to the Cauchy problem associated with the stably stratified Boussinesq system \eqref{eq:2Dbouss-stable} satisfies
%%%%
\be\ba
\sup\limits_{t \in [0, T^*(\alpha)]} \|\de_x \rho ^{\alpha, \delta}(t) \|_{L^\infty(\R^2)}  &\ge \frac 12  \|\de_x \rho_0^{\alpha, \delta}\|_{L^\infty (\R^2)} \left(1+{\log |\log \alpha|}\right)^{\frac{1}{c_2}},
\ea\ee
where $T^*(\alpha) = C{\alpha}\log |\log (\alpha)|$ and $C, c_2>0$ are independent of $\alpha$.  
\end{Thm}

%%%%%

\begin{Rmk}[The 2D Boussinesq system without background stratification]\label{rmk:originalB}
Note that the above result can be reformulated to establish the strong ill-posedness of the original Boussinesq system \eqref{eq:2Dbouss}. %\textcolor{red}{by considering the initial density as suitable small perturbation of the stably stratified one $\bar \rho_{\mathrm{eq}}(y)=-y$}, namely $\rho_0^{\alpha, \delta}(x,y)=\bar \rho_{\mathrm{eq}}(y) + P_0^{\alpha, \delta}
%=-y + P_0^{\alpha, \delta}$. 
More precisely, the ill-posedness in the stably stratified setting \eqref{eq:2Dbouss-stable} can be viewed as a particular case of the ill-posedness of the original Boussinesq equations \eqref{eq:2Dbouss}. Setting the notation for the stable stratification $\bar \rho_{\text{eq}}=-y$, this is achieved by taking the initial density $\rho_0^{\alpha, \delta}$ of the original Boussinesq system in the vicinity of the stable stratification itself, namely $\| \rho_0^{\alpha, \delta}- \bar \rho_{\mathrm{eq}}\|_{L^\infty(\R^2)} \le \delta.$
It is less evident a priori that exactly the same dynamics - the same leading-order model in \eqref{eq:leading1} - can be employed to prove the ill-posedness of the original Boussinesq system \eqref{eq:2Dbouss} for small initial density $\| \rho_0^{\alpha, \delta}\|_{L^\infty(\R^2)} \le \delta$ (rather than near a stable stratification). This is the content of the next result, and its proof involves just a minor modification of the proof of Theorem \ref{thm:main}.
\end{Rmk}
%%%%%
\begin{Thm}[$L^{\infty}$ strong ill-posedness for the original Boussinesq system \eqref{eq:2Dbouss}]\label{thm:main2}
Theorem \ref{thm:main} holds true with \eqref{eq:2Dbouss-stable} replaced by \eqref{eq:2Dbouss}.
\begin{comment}
    There exist $0<\alpha_0 \ll 1$  such that, for any $0<\alpha \le \alpha_0 $ and any $\delta>0$, there exist initial data $(\omega_0^{\alpha, \delta}(x,y), \rho_0^{\alpha, \delta}(x, y))=(\Omega_0^{\alpha, \delta}(R, \beta), P_0^{\alpha, \delta}(R, \beta))$ of the following form 
\begin{align}
    \Omega_0^{\alpha, \delta} (R, \beta) = \bar g_0^{\alpha, \delta} (R) \sin (2\beta), \qquad \red{P_0^{\alpha, \delta} (R, \beta)= R^{1/\alpha} \bar \eta_0^{\alpha, \delta} (R) \cos \beta}, 
\end{align}
where $\bar g_0^{\alpha, \delta} (R),\bar \eta_0^{\alpha, \delta} (R) \in C_c^\infty([5/8, \infty))$ 
and 
\be\ba
\|(\omega_0^{\alpha, \delta}, \de_x \rho_0^{\alpha, \delta}, \de_y \rho_0^{\alpha, \delta})\|_{L^{\infty}(\R^2)} &= \delta, 
\ea \ee
such that the unique solution $\omega^{\alpha, \delta} (t,x,y), \rho^{\alpha, \delta}(t, x, y)$ to the Cauchy problem associated with the original Boussinesq system \eqref{eq:2Dbouss} satisfies
%%%%
\be\ba
\sup\limits_{t \in [0, T^*(\alpha)]} \|\de_x \rho ^{\alpha, \delta}(t) \|_{L^\infty(\R^2)}  &\ge \|\de_x \rho_0^{\alpha, \delta}\|_{L^\infty (\R^2)} \left(1+{\log |\log \alpha|}\right)^{\frac{1}{c_2}},
\ea\ee
where  
$T^*(\alpha)=  C {\alpha}\log |\log (\alpha)|$ and $C, c_2>0$ are independent of $\alpha$.
\end{comment}
\end{Thm}
%%%%%
Since it requires introducing a different notation and it is settled in $\R^3$, our result on the $L^{\infty}$ ill-posedness of the 3D axisymmetric Euler equations is stated and proved in Section \ref{sec:euler}. Here we provide only a very rough idea of its content.
%%%%%
\begin{Thm}\label{thm:main3}
For any $\delta>0$, there exists $C_c^\infty (\R^3) $  initial data with size $$\|(\omega_\theta^\aa, \de_r (r u_\theta^\aa)^2, \de_z (ru_\theta^\aa)^2)_{t=0}\|_{L^\infty (\R^2)} = \delta,$$ and a time $0<t<\delta$ such that $$\left\|\frac{(u_\theta^\aa)^2}{r}+\frac12{\de_r (u_\theta^\aa)^2}\right\|_{L^\infty([0,t])} \ge C \delta^{-1},$$
where $u_\theta^\aa = u_\theta^\aa (t, r, z)$ is the swirl component of the three-dimensional velocity $\bu^\aa$ and $\omega_\theta^\aa=\omega_\theta^\aa (t, r, z)$ is the angular vorticity of the solution to the 3D axisymmetric Euler equations with swirl \eqref{eq:3Daxisymm-intro}.
\end{Thm}
%%%%%
\subsection*{Comparison with the recent literature on ill-posedness for the Boussinesq equations and related models}
The first result of \emph{mild} ill-posedness in $L^{\infty}$ for the 2D Boussinesq equation in vorticity form around a stably stratified steady state with $\bar \rho_{\mathrm{eq}}=-y$ as in \eqref{eq:2Dbouss-stable} is due to Elgindi \& Masmoudi \cite{tarek3}. In \cite{tarek3}, the authors prove that for smooth initial data with vorticity and density gradient of size $\alpha$, the solution to \eqref{eq:2Dbouss-stable} satisfies $\|\omega (t)\|_{L^\infty} > C$ with $C>0$ a constant value independent of $\alpha$, for some $t < \alpha\ll 1$. In this paper we prove that the $L^\infty$ ill-posedness of \eqref{eq:2Dbouss-stable} is actually \emph{strong}, namely we provide infinitesimal initial vorticity and density gradient such that $\|\de_x \rho (t)\|_{L^\infty(\R^2)} > \log |\log \alpha|^\mu$,  $\mu\geq 1$, at some infinitesimal time is arbitrarily large. Providing a \emph{strong} ill-posedness result, the method of this paper differs substantially from the one of \cite{tarek3}. Our strategy is rather inspired by the work of Elgindi \& Shikh Khalil \cite{tarek2}, where a result of strong ill-posedness for \emph{scalar} nonlinear transport equations with a Riesz-type linear operator in two space dimensions is provided. More precisely, we use the same scaling of the polar coordinates, leading to the crucial Elgindi's decomposition of the Biot-Savart law \cite{tarek1}. As in \cite{tarek2}, we derive a Leading Order Model (see \eqref{eq:leading1}) yielding the strong ill-posedness result for a suitable choice of the initial data. We point out, however, that there are several key differences between this paper and \cite{tarek2}. In particular, while \cite{tarek2} deals with scalar equations, our Leading Order Model \eqref{eq:leading1} is a system of coupled equations. 
Moreover, while \cite{tarek2} proves norm inflation of $\omega(t)$ in $L^\infty(\R^2)$, our choice of the initial data (explicit examples are given in Section \ref{sec:initialdata}) demonstrates the norm inflation of the \text{(horizontal) density gradient} $\de_x \rho(t)$ in $L^\infty (\R^2)$, while $\omega(t)$ remains bounded in $L^\infty(\R^2)$.

Having already highlighted the substantial differences, let us now establish a connection between our Leading Order Model \eqref{eq:leading1} and the one derived in \cite{tarek2}. To this end, we sketch the main ideas, while a proper derivation of \eqref{eq:leading1} is provided in Section \ref{sec:lom}. The starting point is the use of the $\alpha$-approximation introduced by Elgindi \cite{tarek1} in the context of $C^{1,\alpha}$ velocity (i.e., $C^\alpha$ vorticity and density gradient). In particular, considering the polar coordinates 
\begin{equation}\label{eq:polar-coord}
    r = \sqrt{x^2 + y^2}, \qquad \beta = \arctan\left(\frac{y}{x}\right),
\end{equation}
we introduce the radial scaling
\begin{equation}\label{eq:scaling}
    R = r^\alpha, \quad 0 < \alpha \le \alpha_0 \ll 1,
\end{equation}
where $\alpha \ll 1$ plays the role of the small parameter of the approximation and is small enough. Writing the spatial derivatives $\partial_x$ (resp. $\partial_y$) in terms of the new radial coordinates $(R, \beta)$ leads to a scale separation between the angular part $- R^{-\frac{1}{\alpha}} (\sin \beta) \partial_\beta $ and the radial part $\alpha R^{-\frac{1}{\alpha} + 1} (\cos \beta) \partial_R$, where note that the latter term is proportional to $\alpha$. This allows expanding the equations \eqref{eq:2Dbouss-grad} in terms of different orders in $\alpha$. Keeping only the dominant terms in $\alpha$ provides our Leading Order Model \eqref{eq:leading1}. In this setting, the leading-order term in $\alpha$ of the transport $\mathbf u \cdot \nabla_{x, y} = (2 \Psi) \partial_\beta $ (see system \eqref{eq:model-new-coord} for more details), where $\Psi = \Psi(R, \beta) = r^{-2} \psi(x,y)$ as in \eqref{eq:psi-polar} below, and $\psi$ is the stream function as in \eqref{eq:streamfun}. In particular, as shown in Lemma \ref{lem:key}, the approximated (or leading order in $\alpha$) equation for $\omega$ in \eqref{eq:2Dbouss-grad} can be turned into the equation
\begin{equation}
    \partial_t g + \frac{\cL(g)}{2\alpha} \sin (2\beta) \partial_\beta g = 0,
\end{equation}
where $\cL(g) = \cL(g)(R)$ is a radial linear operator (defined in \eqref{def:L}). Interestingly, this equation is obtained - up to a change of variables - from the leading-order model in \cite{tarek2} for the Riesz Transform Problem
\begin{equation}\label{eq:riesz-transform-problem}
    \partial_t \omega + \bu \cdot \nabla \omega = R (\omega),
\end{equation}
where $R (\omega)$ is any Riesz Transform. Such a model allows in \cite{tarek2} to determine the precise point-wise growth of solutions for short time. In \cite{tarek2}, the solution to \eqref{eq:riesz-transform-problem} is $\omega (t) = g (t) + \frac{1}{2\alpha } \int_0^t \cL (\omega (\tau)) \, d\tau$ and displays a time growth from which strong ill-posedness of \cite{tarek2} is deduced. One of the important points is that, with this approach, the authors in \cite{tarek2} showed that while the \emph{linear} equation 
\begin{equation}
    \partial_t \omega = R(\omega)
\end{equation}
displays linear time growth, the full nonlinear equation \eqref{eq:riesz-transform-problem} has sub-linear time growth, meaning in a way that the transport term counteracts the linear growth. In our Leading Order Model \eqref{eq:leading1}, we recover again the same equation for $g(t)$ upon writing $\omega$ in \eqref{eq:2Dbouss-grad} as
\begin{equation}
    \omega = g(t) + \int_0^t \partial_x \rho (\tau) \, d\tau.
\end{equation}
We will see that, in contrast to \cite{tarek2}, in our case $\omega$ remains uniformly bounded in $\alpha$, while the time growth leading to strong ill-posedness is due to the density gradient.

%We also point out that our result holds for a large class of initial data $\rho (0,\cdot )= \bar \eta_0 (r) r\cos \beta, \omega (0, \cdot) = \bar g_0(r) \sin (2\beta)$ where $(r, \beta) \in [0, \infty) \times [-\pi/2, \pi/2]$ are the polar coordinates ($x=r \cos \beta, y=r \sin \beta$) and $\bar \eta_0 (r), \bar g_0 (r) \in C_c^\infty ([1, \infty)])$. 

\subsection*{Comparison with the recent literature on ill-posedness for the 3D incompressible Euler equations}
Concerning the 3D incompressible Euler equations, the $L^{\infty}$-ill-posedness has been recently proved by Bourgain \& Li \cite{bourgain2015}. While Theorem \ref{thm:main3} of the present paper for the system considered in vorticity and gradient of the swirl also constitutes a result of $L^{\infty}$ ill-posedness of the 3D axisymmetric Euler equations, the are considerable differences in the approaches. More specifically, the mechanisms that generate the ill-posedness (there is no swirl in \cite{bourgain2015}, while the swirl generates norm inflation in Theorem \ref{thm:main3}), the general form of the admissible initial data and the explicit quantification of the norm inflation by means of a lower bound - point-wise in time - of the $L^\infty$-norm of the swirl (see Theorem \ref{thm:main3}). To the best of our knowledge, Theorem \ref{thm:main3} constitutes the first ill-posedness result achieved by norm inflation of the swirl component. For a more detailed comparison, we refer the reader to Remark \ref{rmk:comparison}.
Regarding recent results on the strong ill-posedness for hydrodynamic equations, we mention for completeness the works \cite{jeong2017, jeong3} and \cite{bourgain, cordoba1, cordoba2}, which are based on different strategies. See also \cite{mazzucato, cordoba3} for results about loss of regularity. 
Finally, we highlight the results in \cite{kim2} regarding the $W^{1,\infty}$ ill-posedness of the SQG equation. As suggested by one of the referees, our approach could provide a \emph{quantitative} version of it.
%%%%%
\subsection*{Comparison with the recent literature on finite-time blow-up}
For $C^{1,\alpha}$ initial data, the local well-posedness of the solutions to \eqref{eq:2Dbouss} is given in \cite{chae1999}, together with a Beale-Kato-Majda-type criterion for blow-up. Next, finite-time blow-up of the local smooth solutions in domains with corners and data $(\nabla \rho (0, \cdot), \omega (0, \cdot)) \in {\mathring {C}}^{0, \alpha}$ was proved by Elgindi \& Jeong in \cite{jeong} for the 2D Boussinesq equations \eqref{eq:2Dbouss-grad} and in \cite{jeong2} for the 3D axisymmetric Euler equations. Furthermore, the finite-time blow-up of finite-energy $C^{1, \alpha}$ solutions to the 2D Boussinesq and 3D axisymmetric Euler equations in a domain with boundary (but without any corner) is due to Chen \& Hou \cite{chen1}. Since \cite{chen1} has been, together with \cite{tarek2}, a source of inspiration for our result, we comment further on similarities and differences. In fact, our strategy and the one of \cite{chen1} share some similarity - even though \cite{chen1} constitutes a result of finite-time blow-up in a regularity class where the well-posedness of \eqref{eq:2Dbouss} is well established, while we prove the strong ill-posedness of \eqref{eq:2Dbouss} and \eqref{eq:2Dbouss-stable} in the low regularity setting of $L^{\infty}$ vorticity and density gradient. First, our Theorem \ref{thm:main} yields not only the strong ill-posedness of the original 2D Boussinesq equations \eqref{eq:2Dbouss-grad}, but also of the 2D Boussinesq equations \eqref{eq:2Dbouss-stable} around a \emph{stably stratified density profile}. Note that the ill-posedness of the stably stratified 2D Boussinesq system \eqref{eq:2Dbouss-stable} could be less expected, since the linearized Boussinesq equations around $\bar \rho_{\mathrm{eq}}(y)$ with $\bar \rho_{\mathrm{eq}}'(y)<0$ and $\bar {\mathbf u}_{\mathrm{eq}}=0$ are spectrally stable (see for instance \cite{gallay}). For that reason, although both our argument and the method in \cite{chen1} are based on the construction of a leading order model for \eqref{eq:2Dbouss-grad}, our Leading Order Model for \eqref{eq:2Dbouss-stable} in the stable setting is different from the one in \cite{chen1}. In particular, even though, as in \cite{chen1}, $\de_y \rho (t, \cdot)$ has a lower order than $\omega (t, \cdot), \de_x \rho (t, \cdot)$ in terms of the small scaling parameter $0<\alpha \ll 1$, we include the equation for $\de_y \rho (t, \cdot)$ in our Leading Order Model \eqref{eq:leading1} in order to be able to close the respective remainder estimates of Section \ref{sec:rem}. There are therefore substantial differences between our Leading Order Model and the one of \cite{chen1}. It is also worth pointing out that, in contrast to \cite{chen1}, our Theorem \ref{thm:main} and Theorem \ref{thm:main2} hold in the full $\R^2$ space without any boundary.
 %\textcolor{red}{Technically, we consider some boundary in the latter case... Dovremmo dire pero' che non abbiamo un dominio periodico come loro? Da loro il boundary ha un ruolo cruciale mentre da noi e' piu' che altro tecnico.}
Note also that a result of finite-time blow-up up of finite-energy smooth solutions to the 2D Boussinesq and the 3D axisymmetric Euler equations with boundaries is the recent work \cite{chen2022}. 
%%%%%
\subsection*{Plan of the paper}
The paper is organized as follows. The derivation of the approximated system leading to the $L^{\infty}$ ill-posedness of \eqref{eq:2Dbouss-stable} is given in Section \ref{sec:derivation}. Next, Section \ref{sec:estimateLOM} provides the $L^\infty$ ill-posedness of the approximated system (Proposition \ref{prop:expl}) as well as $\cH^k$ a priori estimates for the approximated system. Section \ref{sec:rem} provides quantitative estimates of the difference in $\cH^k$ between the approximated and the true Boussinesq system \eqref{eq:2Dbouss-grad}. The proofs of Theorem \ref{thm:main} and Theorem \ref{thm:main2} are achieved in Section \ref{sec:last}. Finally, the application to the 3D axisymmetric Euler equations (proof of Theorem \ref{thm:main3}) is given in Section \ref{sec:euler}.
%%%%%

\section{Derivation of the Leading Order Model and a priori estimates}\label{sec:derivation}
The purpose of this section is to obtain a simpler leading order model which approximates the original system \eqref{eq:2Dbouss-grad} on suitable time scales and exhibits the desired growth. To that end, we seek a suitable expansion of the stream-function $\psi$, defined in \eqref{eq:streamfun}, in terms of the parameter $\alpha$.

Consider $\Omega (R, \beta)$ as in \eqref{eq:new-var}, and further introduce 
\be\label{eq:psi-polar}
\Psi(R, \beta)=r^{-2} \psi (x, y).\ee 
We recall that
%%%%%
\begin{equation}\label{eq:spatial-der} 
\begin{aligned}
\de_x &=R^{-\frac 1 \alpha} ((\cos \beta)\alpha R \de_R - (\sin \beta) \de_\beta);\\
\de_y &=R^{-\frac 1 \alpha} ((\sin \beta) \alpha R \de_R + (\cos \beta) \de_\beta).
\end{aligned}
\end{equation}
 We can then express the velocity $\bu=(u_1, u_2)$ in terms of $\Psi$ as follows
\begin{align}
u_1&=-\de_y (r^2 \Psi (R, \beta))=-R^\frac 1 \alpha (2 \sin \beta \Psi + \alpha \sin \beta R \de_R \Psi + \cos \beta \de_\beta \Psi); \label{eq:u1}\\
u_2&=\de_x (r^2 \Psi (R, \beta))=R^\frac 1 \alpha (2 \cos \beta \Psi + \alpha \cos \beta R \de_R \Psi -\sin  \beta \de_\beta \Psi). \label{eq:u2}
\end{align}
%%%%%
Now, we recall Elgindi's decomposition of the Biot-Savart law, which is used here to derive the leading order model. We highlight the main ideas, and for more details, see \cite{tarek1, drivas}.  We remark that this is also related to the ``Key Lemma" by Kiselev \& \v{S}ver\'{a}k \cite{kiselev2014}.
\begin{Thm}[The singular part, \cite{tarek1}]\label{thm:tarek}
Given $\Omega=\Omega (R, \beta) \in H^k$ such that 
\begin{equation}\label{eq:ellittica}
4\Psi + \alpha^2 R^2 \de_{RR} \Psi + \de_{\beta \beta} \Psi + (4\alpha + \alpha^2) R \de_R \Psi = -\Omega,\end{equation}
there is a unique solution $\Psi$ that can be decomposed as follows
\be\label{eq:psi-main}
\Psi=\Psi(\Omega)(R, \beta)= \mathbb{P}_2(\Psi) + \mathbb{P}_{\neq}(\Psi) =:\Psa + \hE, 
\ee 
where $\mathbb{P}_2$ is the projection onto 2-modes and $\mathbb{P}_{\neq}=\mathrm{Id}-\mathbb{P}_2$. More precisely
\be \label{eq:psi2}
\Psa:= {\Ps} + \mathcal{R}^\alpha(\Omega),
\ee
where
\be \label{eq:psiapp}
\Ps=\Ps(\Omega)(R, \beta)=  \frac{\cL (\Omega)(R)}{4 \alpha}\sin (2\beta) + \frac{\cL^c (\Omega)(R)}{4 \alpha}\cos (2\beta),
\ee
is a solution to
\be
4\Ps+\de_{\beta \beta }\Ps=0,
\ee
with 
\be\label{def:L}
\cL (\Omega)(R):=\frac 1 \pi\int_R^\infty \int_0^{2\pi} \frac{\Omega(s, \beta) \sin (2\beta)}{s}\, d\beta \, d s, \qquad \cL^c (\Omega)(R):=\frac 1 \pi\int_R^\infty \int_0^{2\pi} \frac{\Omega(s, \beta) \cos (2\beta)}{s}\, d\beta \, d s
\ee
and
\begin{align}
    \label{def:R}
    \mathcal{R}^\alpha (\Omega):= \frac{R^{-\frac{4}{\alpha}}}{4\alpha\pi}\left(\sin (2\beta) \int_0^R \int_0^{2\pi} s^\frac{4}{\alpha} \frac{\Omega(s, \beta) \sin (2\beta)}{s}\, d\beta \, d s + \cos (2\beta) \int_0^R \int_0^{2\pi} s^\frac{4}{\alpha} \frac{\Omega(s, \beta) \cos (2\beta)}{s}\, d\beta \, d s\right). 
\end{align}
Finally, the total error term 
$$\Psi-\Ps=\mathcal{R}^\alpha(\Omega)+\hE$$ satisfies the inequality
\begin{align}\label{eq:hardy}
\|\mathcal{R}^\alpha(\Omega)\|_{H^k}+\|\hE\|_{H^k} \le M_k  \| \Omega\|_{H^k},
\end{align}
where the constant $M_k>0$ is independent of $\alpha$.
\end{Thm}
The proof of Theorem \ref{thm:tarek} is given in \cite[Theorem 4.22 and 4.23]{drivas}. We collect here the main ideas. 
\begin{proof}[Sketch of the proof of Theorem \ref{thm:tarek}]
   Let $\Psi=\Psi(R, \beta)$ and $\Omega = \Omega (R, \beta)$ take the form 
    \begin{align}
        \Psi(R, \beta)= \exp({2i\beta})\wt \Psi_2(R), \quad \Omega (R, \beta) = \exp({2i\beta})\wt \Omega_2 (R).
    \end{align}
    Substituting them into \eqref{eq:ellittica}, observing that the term $\exp({2i\beta}) \tilde \Psi_2 $ belongs to the kernel of the operator $4+\de_{\beta \beta}$, we can just solve the ODE 
    \begin{align*}
         \alpha^2 R^2 \wt \Psi_2''(R) + (4\alpha +\alpha^2) R \wt \Psi_2'(R) = - \wt \Omega_2(R). 
    \end{align*}
    It readily follows that 
    \begin{align*}
        \Psa= \exp({2i\beta}) \wt \Psi_2=\Ps+\mathcal{R}^\alpha(\Omega),
    \end{align*}
    as given in \eqref{eq:psiapp} and \eqref{def:R}.
    Moreover, the inequality $\|\mathcal{R}^\alpha(\Omega)\|_{H^k}+\|\hE(\Omega)\|_{H^k} \le M_k \|\Omega\|_{H^k}$ is uniform in $\alpha$ and its simple but refined proof based on the proof of the Hardy inequality is given in \cite[Lemma 4.24]{drivas}.
\end{proof}
For the regular part, we recall here \cite[Theorem 4.22]{drivas} for the reader's convenience.
\begin{Thm}[The regular part, \cite{drivas}]\label{thm:tarek2}
{Given $\Omega=\Omega (R, \beta) \in H^k$ such that 
\begin{align*}
    \int_0^{2\pi} \Omega(R, \beta) \exp\left(in\beta\right) \, d\beta =0, \qquad n=0, 1, 2,
\end{align*}
there is a unique solution to \eqref{eq:ellittica} satisfying
\begin{align}
    \alpha^2\|R^2\de_{RR}\Psi\|_{H^k}+\alpha \|R\de_R \Psi\|_{H^k}+\|\de_{\beta \beta}\Psi\|_{H^k}\le M_k \| \Omega \|_{H^k},
\end{align}
where the constant $M_k>0$ is independent of $\alpha$.}
\end{Thm}
\begin{proof}[Sketch of the proof of {\cite[Theorem 4.23]{drivas}}]
    Consider the series expansion
 \begin{align}
     \Psi = \sum_{n\ge 3}^\infty \exp({in\beta})\mathbb{P}_n(\Psi),
 \end{align}
where $n\ge 3$ as $\mathbb{P}_n(\Psi)=0$ for $n=0,1,2$. 
Taking now the scalar product of \eqref{eq:ellittica} against $\de_{\beta \beta}\Psi$ in $L^2([0, 2\pi ]\times [0, \infty))$, integrating by parts and using the crucial estimate
$$\|\de_{\beta} \Psi\|_{L^2}^2 \le \frac{1}{9} \|\de_{\beta \beta} \Psi\|_{L^2}^2$$
yields the proof. 
\end{proof}
The following lemma is a direct consequence of Theorem \ref{thm:tarek2}.

\begin{Lem}\label{lem:psierr}
    Under the assumption and notation of Theorem \ref{thm:tarek}, consider
    \begin{align}\label{eq:regular-part}
        \hE-\mathbb{P}_0(\Psi)-\mathbb{P}_1(\Psi). 
    \end{align}
    It satisfies the following estimate
    \begin{align}
    &\alpha^2\|R^2\de_{RR}(\hE-\mathbb{P}_0(\Psi)-\mathbb{P}_1(\Psi))\|_{H^k}+\alpha \|R\de_R (\hE-\mathbb{P}_0(\Psi)-\mathbb{P}_1(\Psi))\|_{H^k}\notag\\& +\|\de_{\beta \beta}(\hE-\mathbb{P}_0(\Psi)-\mathbb{P}_1(\Psi))\|_{H^k}\le M_k \left\|
    %\Omega-\sum_{k=0}^2\mathbb{P}_{k}
    \Omega\right\|_{H^k}.
\end{align}
\end{Lem}
%%%%%
\begin{comment}
\begin{Rmk}
Consider the projection on $\sin(2\beta)$ for functions in $L^2(0,2\pi)$ through the $L^2$-scalar product, namely 
\begin{equation*}
   \left\langle \cdot,\sin(2\beta)\right\rangle:= \frac{1}{\pi}\int_0^{2\pi}\cdot \sin(2\beta)d \beta.
\end{equation*}
If $\Omega(R,\beta)=f(R)\sin(2\beta)$, then $\left\langle \Omega,\sin(2\beta)\right\rangle=f(R)$. Note that it follows for such $\Omega$ that 
\begin{equation*}
    \cL (\Omega)(R)=\int_R^{\infty}\frac{f(s)}{s}d s,
\end{equation*}
where $\cL$ as defined in \eqref{def:L}. In \cite[Theorem 2]{tarek2}, the authors state the decomposition \eqref{eq:psi-main} for $\Psi$ for data of that structure.
\textcolor{red}{modificare o cancellare questo remark: aggiungere discorso sul periodo delle funzioni e le loro estensioni?}
\end{Rmk}
\end{comment}
%%%%%
\subsection{The Leading Order Model}\label{sec:lom}
 \noindent 
 %\textcolor{blue}{To construct our leading order model, we plug the formula for $\Ps$  \eqref{eq:psiapp} into the expressions of $\de_x u_1, \de_x u_2, \de_y u_1$} using \eqref{eq:u1}-\eqref{eq:u2} and \eqref{eq:spatial-der} and, exploiting a crucial cancellation, we extract the following main order terms:
% %
% \be\label{eq:approx-der-vel}
% \ba 
% \de_x u_1& = - \de_y u_2 = - \cos (2\beta) \de_\beta \Psi + \frac{1}{2}\sin(2\beta) \de_{\beta \beta}\Psi + \mathcal{O}(1)=  - \frac{\cL(\Omega)}{2 \alpha} +\mathcal{O}(1),
% \ea 
% \ee
% %
% while
% %
% \be\label{eq:approx-der-vel2}
% \de_x u_2 = \frac{\cL^c(\Omega)}{2 \alpha} +\mathcal{O}(1), \quad  \de_y u_1 = - \frac{\cL^c(\Omega)}{2 \alpha} +\mathcal{O}(1),
% \ee
% %
% where the notation $\mathcal{O}(1)$ stands for lower order terms that are bounded by some constant $C>0$ independent of $\alpha$. 
% %
% Introducing the variables 
% %
% \be \label{eq:variables}
% \eta (t, R, \beta)= \de_x \rho (t, x, y), \quad \xi (t, R, \beta)=\de_y \rho (t, x,y)
% \ee
% %
% such an approximation yields the following leading order model
% %%%%%
% \be\label{eq:model-new-coord}
% \ba
% \de_t \Omega + \left( - (\alpha R \de_\beta \Psi ) \de_R  + (2\Psi + \alpha R \de_R \Psi) \de_\beta \right) \Omega & = \eta, \\
% \de_t \eta + \left( - (\alpha R \de_\beta \Psi ) \de_R + (2\Psi + \alpha R \de_R \Psi) \de_\beta \right) \eta  & = - (\de_x u_1) \eta + \de_x u_2 (1- \xi), \\
% \de_t \xi + \left( - (\alpha R \de_\beta \Psi ) \de_R + (2\Psi + \alpha R \de_R \Psi) \de_\beta \right) \xi  & = \de_x u_1 (\xi -1)-\de_y u_1 \eta. 
% \ea
% \ee
First, we rely on \eqref{eq:spatial-der} and \eqref{eq:u1}-\eqref{eq:u2} to rewrite \eqref{eq:2Dbouss-grad} in terms of $\Psi$ as defined in \eqref{eq:psi-polar} and the new variables
\be \label{eq:variables}
\eta (t, R, \beta)= \de_x \rho (t, x, y), \quad \xi (t, R, \beta)=\de_y \rho (t, x,y),
\ee
which yields 
%%%%%
\be\label{eq:model-new-coord}
\ba
\de_t \Omega + \left( - (\alpha R \de_\beta \Psi ) \de_R  + (2\Psi + \alpha R \de_R \Psi) \de_\beta \right) \Omega & = \eta, \\
\de_t \eta + \left( - (\alpha R \de_\beta \Psi ) \de_R + (2\Psi + \alpha R \de_R \Psi) \de_\beta \right) \eta  & = - (\de_x u_1) \eta + \de_x u_2 (1- \xi), \\
\de_t \xi + \left( - (\alpha R \de_\beta \Psi ) \de_R + (2\Psi + \alpha R \de_R \Psi) \de_\beta \right) \xi  & = \de_x u_1 (\xi -1)-\de_y u_1 \eta. 
\ea
\ee
%%%%%
Second, to construct our leading order model, we rely on the expansion of $\Psi$, namely we plug the formula \eqref{eq:psiapp} for $\Ps$ into the expressions of $\de_x u_1, \de_x u_2, \de_y u_1$ . More precisely, by means of \eqref{eq:spatial-der}-\eqref{eq:u2}, Theorem \ref{thm:tarek} and a crucial cancellation, we extract the following main order terms:
\be\label{eq:approx-der-vel}
\ba 
\de_x u_1& = - \de_y u_2 = - \cos (2\beta) \de_\beta \Psi + \frac{1}{2}\sin(2\beta) \de_{\beta \beta}\Psi + \mathcal{O}(1)=  - \frac{\cL(\Omega)}{2 \alpha} +\mathcal{O}(1),
\ea 
\ee
see \eqref{def:dexu1} below for details, while
\be\label{eq:approx-der-vel2}
\de_x u_2 = \frac{\cL^c(\Omega)}{2 \alpha} +\mathcal{O}(1), \quad  \de_y u_1 = - \frac{\cL^c(\Omega)}{2 \alpha} +\mathcal{O}(1),
\ee
where the notation $\mathcal{O}(1)$ stands for lower order terms that are bounded by some constant $C>0$ independent of $\alpha$. 

We further reduce \eqref{eq:model-new-coord} by substituting the expansion \eqref{eq:psi-main} for $\Psi$ (in particular, $\Ps$ as in \eqref{eq:psiapp}) into the transport terms. In view of \eqref{eq:approx-der-vel}, we only keep 
the leading order terms in $\alpha$. This yields the following approximate system:

\begin{subequations}
\begin{align}
\de_t \Ome  + (2 \Ps)  \de_\beta \Ome &= \et, \label{eq:leading11}\\
\de_t \et +   (2 \Ps)  \de_\beta \et &= \left(\frac{\cL(\Ome)}{2 \alpha} \right)\et+\left(\frac{\cL^c(\Ome)}{2 \alpha} \right) (1-\csi), \\
\de_t \csi +  (2 \Ps)   \de_\beta \csi &= \left(\frac{\cL(\Ome)}{2 \alpha} \right)(1-\csi) + \left(\frac{\cL^c(\Ome)}{2 \alpha} \right) \et. \label{eq:leading11last}
\end{align}
\end{subequations}
%%%%%
The equations for $\et$ and $\csi$ are \emph{linear}, and the unique solutions with radial initial data $(\eta_{0, \mathrm{app}}, \xi_{0, \mathrm{app}})$ are also radial as both $\cL(\cdot)$ and $\cL^c(\cdot)$ are radial functions.
This crucial observation motivates us to consider radial solutions $(\eta_{\mathrm{app}}, \xi_{\mathrm{app}})$, allowing for a further reduction of the leading order model.

\begin{Lem}\label{lem:key}
Let $\Ome (t, R, \beta)$ be a solution to \eqref{eq:leading11} with initial datum $\Omega_{0, \mathrm{app}}^{\alpha, \delta}(R, \beta)=\bar g_0^{\alpha, \delta}(R) \sin (2\beta)$ where $ g_0^{\alpha, \delta}\in C_c^{\infty}([1,\infty))$ and suppose that the right-hand side $\et=\et (t, R)$ is a radial function. 
Then $\Ome$ admits the representation formula 
\begin{equation}\label{def:g1}
    \Ome (t)=g(t)+ \int_0^t \et (\tau, R) \, d\tau,
\end{equation}
where $g(t)$ is the unique solution to the transport equation 
\be\label{eq:g}
\de_t g +  \frac{\cL (g)}{2\alpha} \sin (2\beta)   \de_\beta g=0, \qquad g(0)= \bar g_0^{\alpha, \delta} (R) \sin (2\beta).
\ee
% The function $g(t)$ defined as follows
% \begin{align}\label{def:g1}
%     g(t):=\Ome (t) - \int_0^t \et (\cdot, R) \, d\tau,
% \end{align}
% solves the following transport equation:
% %
% %
In particular, $\cL^c(\Ome)=0$ and $\Ps =\Ps (\Ome)$ in \eqref{eq:psiapp} contains only the odd component, i.e.
\begin{align}\label{eq:odd}
    \Ps(\Ome)= \frac{\cL(\Ome)}{4\alpha} \sin(2\beta). 
\end{align}
\end{Lem}
\begin{proof}
From \eqref{def:g1} and the assumption that $\et$ is a radial function, we deduce that
$$\cL(\Ome)=\cL(g), \qquad \cL^c(\Ome)=\cL^c(g),$$
with $\cL, \cL^c$ as defined in \eqref{def:L}. It follows from the definition of $g(t)$ in \eqref{def:g1}, the equation for $\Ome$ in \eqref{eq:leading11} and the expression of $\Ps$ in \eqref{eq:psiapp} that 
$g(t)$ satisfies
\begin{equation}\label{eq:g-long}
    \de_t g + \frac{\cL(g)}{2\alpha}\sin(2\beta) \de_\beta g + \frac{\cL^c(g)}{2\alpha}\cos(2\beta) \de_\beta g =0, %\notag,
    \qquad 
    g(0)=\bar g_0^{\alpha, \delta}(R) \sin (2\beta).
\end{equation}
The solution to \eqref{eq:g} (with initial data $g(0)= \bar g_0^{\alpha, \delta}(R) \sin (2\beta)$) is unique and odd, as explicitly given in \eqref{eq:g-formula}. Consequently, $\mathcal{L}^c(g(t))=0$ for all times and, in particular, such $g(t)$ solves \eqref{eq:g-long}.  Thus, by uniqueness, the unique solutions to the Cauchy problems \eqref{eq:g} and \eqref{eq:g-long} coincide. Finally, \eqref{eq:odd} follows. Further details can be found in \cite[Proposition 2.1]{tarek2}.
% The transport equation \eqref{eq:g-long} preserves odd symmetry. The unique solution $g(t)$ with initial datum $g|_{t=0}$ being odd (in $\beta$) is odd for all times. Hence,
% $$\cL^c(g)=\cL^c(\Ome)=0.$$ 
\end{proof}

%
%An explicit formula for the unique solution $g$ will be given in \eqref{eq:g-formula}.
In the framework of Lemma \ref{lem:key}, namely for $\et$ radial and odd-symmetric $\Ome$, we reduce system \eqref{eq:leading11}-\eqref{eq:leading11last} replacing $\Ps$ with \eqref{eq:odd}. Observe that from \eqref{eq:approx-der-vel2} it follows that
\begin{align*}
    \de_x u_2 = \mathcal{O}(1), \quad \de_y u_1 = \mathcal{O}(1). 
\end{align*}
Finally, this observation leads to the following Leading Order Model: 
\be\label{eq:leading1}\tag{LOM}
\ba
\de_t \Ome  + \frac{\cL(\Ome)}{2\alpha} \sin(2\beta)  \de_\beta \Ome &= \et, \\
\de_t \et +   \frac{\cL(\Ome)}{2\alpha} \sin(2\beta)  \de_\beta \et &= \left(\frac{\cL(\Ome)}{2 \alpha} \right)\et, \\
\de_t \csi +  \frac{\cL(\Ome)}{2\alpha}  \sin(2\beta) \de_\beta \csi &= \left(\frac{\cL(\Ome)}{2 \alpha} \right)(1-\csi). 
\ea
\ee
%%%%%
We will construct initial data such that $(\eta_{0, \mathrm{app}},\xi_{0, \mathrm{app}})$ are radial functions and $\omega_{0, \mathrm{app}}$ has odd symmetry in $x$ and $y$, giving rise to solutions within the framework of Lemma \ref{lem:key}, which enables to reduce the evolution equations to \eqref{eq:leading1}. 

This way, let $\eta_{0, \mathrm{app}}=\et(0, R, \beta)= \bar \eta_0^{\alpha, \delta} (R)$ be a radial function. Then the transport equation
\begin{equation}\label{eq:eta}
\de_t \et +  \frac{\cL(\Ome)}{2\alpha}  \sin(2\beta) \de_\beta \et =  \left(\frac{\cL(\Ome)}{2 \alpha} \right)\et, \qquad \et(0, R, \beta)= \bar \eta_0^{\alpha, \delta} (R)
\end{equation}
admits the following solution:
\be\label{eq:eta-sol}\et(t, R, \beta)= \bar \eta_0^{\alpha, \delta} (R) \exp{\left(  \frac{1}{2\alpha} \int_0^t {\cL(\Ome (\tau))}\, d\tau\right)}.\ee
Let us now plug the above formula inside the equation for $\Omega_{\mathrm{app}}$ in \eqref{eq:leading1}, yielding
\be\label{eq:omega}
\de_t \Ome + \frac{\cL(\Ome)}{2\alpha}  \sin(2\beta)  \de_\beta \Ome=\bar \eta_0^{\alpha, \delta} (R) \exp{\left( \frac{1}{2\alpha} \int_0^t {\cL(\Ome (\tau))} \, d\tau\right)}.
\ee
%%%%%

%%%%%
Relying on the definition of $g(t)$ in \eqref{def:g1} yields a formula for $\Ome$:
\be\label{eq:def-g}
\Ome (t)= g(t) + \bar \eta_0^{\alpha, \delta}(R) \int_0^t \exp{\left( \frac{1}{2\alpha} \int_0^\tau {\cL(\Ome (\mu))}\, d\mu \right)}\, d\tau.
\ee
In particular, the behavior of $\Ome (t)$ can be deduced by studying the equation for $g(t)$ in \eqref{eq:g}. Note that this equation \eqref{eq:g}, upon applying a similar change of variables, corresponds to the leading-order model derived in \cite{tarek2}.
This observation allows us to rely on the following set of known facts. 
%%%%% % \cite[Lemma 3.1, Proposition 3.3]{tarek2}
\begin{Lem}[List of known facts, from {\cite[Lemma 3.1, Proposition 3.3]{tarek2}}]\label{prop:LOM}
Let us consider the Cauchy problem \eqref{eq:g}. The following hold. 
\begin{enumerate}
\item The solution $g(t)$ reads as follows:
\be\label{eq:g-formula}
\ba
g(t)=g(t)(t, R, \beta)&= \bar g_0^{\alpha, \delta}(R)\frac{2(\tan \beta) \exp{\left(-\frac 1 \alpha \int_0^t \cL (g(\tau)) \, d \tau \right)}}{1+(\tan^2\beta) \exp{\left(-\frac 2 \alpha \int_0^t \cL (g(\tau)) \, d \tau \right)}}\\
&=\bar g_0^{\alpha, \delta}(R)\frac{\sin (2\beta) \exp{\left(-\frac 1 \alpha \int_0^t \cL (g(\tau)) \, d \tau \right)}}{\cos^2\beta+(\sin^2\beta) \exp{\left(-\frac 2 \alpha \int_0^t \cL (g(\tau)) \, d \tau \right)}}.
\ea
\ee
\item For some $c_1>0, c_2>0$ independent of $\alpha$, the lower and upper bound below are fulfilled
\be\label{eq:Lg-bounds}
c_1 \int_R^\infty \frac{\bar g_0^{\alpha, \delta}(s)}{s} \,  \exp{\left(-\frac 1 \alpha \int_0^t \cL(g(\tau)) \, d\tau\right)}\, ds \le \cL(g(t))(R) \le c_2 \int_R^\infty \frac{\bar g_0^{\alpha, \delta}(s)}{s} \,  \exp{\left(-\frac 1 \alpha \int_0^t \cL(g(\tau)) \, d\tau\right)}\, ds,
\ee
\item the following estimates hold
\be\label{eq:g-upperandlower}
\frac{2\alpha}{c_2} \log \left(1+\frac{c_2}{2\alpha} t \cG_0(R)\right) \le \int_0^t \cL(g(\tau))(R) \, d\tau  \le \frac{2\alpha}{c_1} \log \left(1+\frac{c_1}{2\alpha} t \cG_0(R) \right),
\ee
where
$$ \cG_0(R) = \cG_0(\bar g_0^{\alpha, \delta})(R) = \int_R^\infty \frac{\bar g_0^{\alpha, \delta}(s)}{s} \, ds.$$
\item Finally, $\Ome (t)$ can be explicitly expressed in terms of $g(t)$ as follows:
\be\label{eq:ome-explicit}
\Ome (t)= g(t) + \bar \eta_0^{\alpha, \delta}(R) \int_0^t \exp{\left( \frac{1}{2\alpha} \int_0^\tau {\cL(g(\mu))}\, d\mu \right)}\, d\tau.
\ee
\end{enumerate}
\end{Lem}
\begin{proof}
Item (1) is readily verified by plugging formula \eqref{eq:g-formula} into equation \eqref{eq:g}. Let us consider item (2). By \eqref{def:L} and formula \eqref{eq:g-formula},
\be\ba
\cL(g(t))&= \frac 4 \pi \int_R^\infty  \frac{\bar g_0^{\alpha, \delta}(s)}{s} \int_0^{2\pi} \frac{(\sin^2 \beta) \exp{\left(-\frac 1 \alpha \int_0^t \cL(g(\tau)) \, d\tau\right)}}{1+(\tan^2 \beta) \exp{\left(-\frac 2 \alpha \int_0^t \cL(g(\tau)) \, d\tau\right)}} \, d\beta\, ds \\
& =  4 \int_R^\infty  \frac{\bar g_0^{\alpha, \delta}(s)}{s} \frac{\exp{\left(-\frac 1 \alpha \int_0^t \cL(g(\tau)) \, d\tau\right)}}{\left(1+\exp{\left(-\frac 1 \alpha \int_0^t \cL(g(\tau)) \, d\tau\right)}\right)^2} \, ds.
\ea\ee
From the last line, one notices in particular that $\cL(g(t)) \ge 0$ if $\bar g_0^{\alpha, \delta}\ge 0$, from which the estimates of (2) follow. Item (3) is proved in \cite[Lemma 3.2]{tarek2}. The idea is to verify that the operator 
\be
\cG_t (g(t) ) :=  \int_R^\infty \frac{\bar g_0^{\alpha, \delta}(s)}{s} \,  \exp{\left(-\frac 1 \alpha \int_0^t \cG_\tau (g(\tau)) \, d\tau\right)}\, ds
\ee
satisfies the Riccati type equation
$$\de_t  \cG_t (g(t) )  = - \frac{(\cG_t (g(t) ))^2}{\alpha} \quad \Rightarrow \quad \cG_t (g(t) )=\frac{ \cG_0 (\bar g_0^{\alpha, \delta})(R)}{1+\frac t \alpha   \cG_0 (\bar g_0^{\alpha, \delta})(R)},
$$
from which one has the desired estimates after integrating in time and infer suitable lower and upper bounds \eqref{eq:g-upperandlower} by solving the respective differential inequalities.
Finally, \eqref{eq:ome-explicit} relies on the observation that $\cL(\Ome)=\cL(g)$ (as in Lemma \ref{lem:key}). 
\end{proof}
%%%%%

\subsection{A concrete example of a sequence of initial data}\label{sec:initialdata}
The initial data is constructed in terms of the scaled coordinates $(R,\beta)$, and a characterization in the original coordinates is provided. For our purposes, it is convenient to work in the weighted Sobolev spaces $\mathcal{H}^k$ defined in \eqref{def:Hk}. These spaces enjoy suitable embeddings in $L^{\infty}$, see Lemma \ref{lem:embedding}. Note that the $H_{xy}^k(\mathbb{R}^2)$ and $\cH_{R,\beta}^k([0,\infty)\times [0,2\pi])$ norms are not equivalent due to the change of measure $r\,dr$ or $dR$.\\
\textbf{Initial vorticity}.
The initial vorticity is given by $\Omega_0(R,\beta) = \overline{g}_{0}^{\alpha,\delta}(R) \sin(2\beta)$, where $\overline{g}_{0}^{\alpha,\delta} \in C_c^{\infty}([1,\infty))$ and $\|\overline{g}_{0}^{\alpha,\delta}\|_{L^{\infty}}=\delta$. This choice is motivated by the odd-symmetry in both $x$ and $y$, which is relevant for solving \eqref{eq:g} explicitly, and the action of the Riesz-Transform on such functions. The localization of the support for $R\geq 1$ is required for Lemma \ref{prop:LOM}. More precisely, in $xy$-coordinates, one has
\begin{equation*}
    \omega_0(x,y) = \overline{g}_{0}^{\alpha,\delta}(|\textbf{x}|^{\alpha}) \frac{2xy}{|\bx|^2}, \quad \bx:=(x,y).
\end{equation*}
While $\omega_0$ is of size $\delta$ in $L^{\infty}$-norm, it is not uniformly bounded (in $\alpha$) in Sobolev spaces $H^s(\mathbb{R}^2)$ in $xy$-coordinates. The radial cut-off $\overline{g}_{0}^{\alpha,\delta}(R)$ does not depend on $\alpha$, but its support in the original radial coordinate $r=|\textbf{x}|$ spreads out as $\alpha\rightarrow 0$. %For a more detailed discussion of suitable bounds in Sobolev and Besov spaces, as well as the Riesz-transform of such functions, refer to \cite[Lemma 8.9]{tarek3}. 
However, note that $\Omega_0$ is uniformly bounded in $\alpha$ in the Sobolev spaces $\mathcal{H}^k$ tailored to the coordinates $(R,\beta)$.
%
%\textcolor{red}{Commento per noi: assumendi che $\supp(g_0^{\alpha,\delta})\subset [1,2]$ sono divergenti la norma $L^2$ e la derivata angolare $R^{-\frac{1}{\alpha}}g_0^{\alpha,\delta}\sin(\beta)2\cos(2\beta)$ in $L^2$
%\begin{equation*}
%    \int_{\R^2}|\omega_0|^2\mathrm{d}x\sim \int_0^{\infty}|g_0^{\alpha,\delta}|^2r\mathrm{d}r=\frac{1}{2}r^2\Big|_{1}^{2^{\frac{1}{\alpha}}}, \qquad \|\omega_0\|_{H^1}\geq C \int_{0}^{\infty}|R^{-\frac{1}{\alpha}}g_0^{\alpha,\delta}|^2r\mathrm{d}r=\int_{1}^{2^{\frac{1}{\alpha}}}r^{-1}\mathrm{d}r=\frac{1}{\alpha}\log(2).
%\end{equation*}
%}
Indeed, for all $k\in \N$ there exists $C_k>0$ independent of $\alpha$ such that
\begin{equation*}
\left\|\Omega_0\right\|_{\cH^k}\leq C_k.
\end{equation*}
For \eqref{eq:leading1}, we set $\Omega_{\mathrm{app},0}=\Omega_0$ so that the initial error for the vorticity vanishes, i.e. $\Omega_{r,0}:=\Omega_0-\Omega_{\mathrm{app},0}=0$. \\
\textbf{Initial density}. 
We consider a localized small amplitude initial perturbation $\rho_0$ of the stratified equilibrium $\bar \rho_{\text{eq}}(y) = \bar \rho_c - y$ in the horizontal direction. To solve \eqref{eq:leading1}, we seek initial data for the density gradient $(\eta_0,\xi_0)$ that is radial at leading order, see Section \ref{sec:lom}. Here, leading order refers to an error measured in $\cH^k$-norms. For the sake of simplicity, we construct data such that $\xi_0$ vanishes at leading order. Let
\begin{equation}\label{eq:initial-rho}
\frac{\rho_0(x,y)}{x} =\eta_0^{\alpha, \delta}\left(|\bx|^\alpha-\frac{1}{2} \right)=:\bar \eta_0^{\alpha, \delta}(|\bx|^\alpha)=\bar \eta_0^{\alpha, \delta}(R),
\end{equation}
where $\eta_0^{\alpha, \delta}\in C_c^{\infty}\left((\frac{1}{8},\frac{1}{4})\right)$ and $\|\eta_0^{\alpha, \delta}\|_{L^{\infty}}=\delta$. This gives $\supp \bar \eta_0^{\alpha, \delta}(R) \subset \left(\frac 58, \frac 34\right)$.
%In the unscaled variable $r=R^{\frac{1}{\alpha}}$ the support collapses in $1$ for $\alpha\rightarrow 0$. By consequence, such data is uniformly in $\alpha$ only of H\"older-regularity $C^{0,\alpha}$.
%\textcolor{red}{Alternativa
%\begin{equation*}
%\rho(x,y)=\eta_0^{\alpha, \delta}(|\textbf{x}|^\alpha)x \qquad  \rho_0(R,\beta)=\eta_0^{\alpha, \delta}(R^{\frac{1}{\alpha}}-1)^\alpha)R^{\frac{1}{\alpha}}\cos(\beta),
%\end{equation*}
%con  $\eta_0^{\alpha, \delta}\in C_c^{\infty}\left((\frac{1}{4},\frac{3}{4})\right)$. Supporto collassa in $0$.}
%Nevertheless, we infer suitable $\cH^k$-bounds for such data. 
Observe that, from
$$\de_x=R^{-\frac 1 \alpha} (\alpha (\cos \beta) R \de_R - (\sin \beta) \de_\beta ),$$
it follows
\be
\de_x  \rho_0^{\alpha, \delta} (x, y) = \bar \eta_0^{\alpha, \delta}(R) + \alpha  R \de_R\bar \eta_0^{\alpha, \delta}(R) \cos^2(\beta). 
\ee
Concerning the initial data for the Leading Order Model \eqref{eq:leading1}, the choice in \eqref{eq:eta} consists of $$\de_x \rho_{\mathrm{app}} (0, \cdot)= \de_x \rho_{0,\mathrm{app}}^{\alpha, \delta}  (\cdot) =\eta_\mathrm{app}(0, R, \beta)= \bar \eta_0^{\alpha, \delta} (R),$$
which, in other words, means that we make an error at the initial time that is
\be
\de_x \rho_r (0, \cdot):=\de_x \rho_0^{\alpha, \delta} (\cdot)- \de_x \rho_{0, \mathrm{app}}^{\alpha, \delta} (\cdot)=\alpha R (\de_R \bar  \eta_0^{\alpha, \delta})(R) \cos^2 \beta. 
\ee
We estimate the size of such error
\be\label{eq:data eta rem}
\|\de_x \rho_r (0)\|_{\cH^N} = \|\eta_r (0)\|_{\cH^N} \le \alpha \|\bar \eta_0^{\alpha, \delta}\|_{\cH^{N+1}}=\alpha C_{N+1}.
\ee
Similarly, the initial datum for $\xi=\de_y \rho$ is computed from \eqref{eq:spatial-der}: 
\be
\de_y \rho (0, \cdot)=\xi_{0}(\cdot) = \alpha R(\de_R \bar \eta_0^{\alpha, \delta}(R)) \sin \beta \cos \beta=\alpha\frac{R}{2}(\de_R \bar \eta_0^{\alpha, \delta}(R)) \sin 2\beta.
\ee
For the Leading Order Model \eqref{eq:leading1}, we then set 
\be\label{eq:xi-initial}
\csi (0, R, \beta)= 0,
\ee
so that the error at initial time is estimated as
\be\label{eq:data xi rem}
\|\de_y \rho_r (0)\|_{\cH^N} = \|\csi (0)\|_{\cH^N} \le \alpha \|\bar \xi_0^{\alpha, \delta}\|_{\cH^{N+1}}=\alpha C_{N+1}.
\ee

\section{Estimates on the Leading Order Model}\label{sec:estimateLOM}
The following result yields $L^\infty(\R^2)$ ill-posedness of our Leading Order Model \eqref{eq:leading1}. Afterwards, this section provides $\cH^k$ estimates on the Leading Order Model \eqref{eq:leading1} for initial data $(\Omega_{0, \mathrm{app}}, \eta_{0, \mathrm{app}}, \xi_{0, \mathrm{app}})$ where $\eta_{0, \mathrm{app}}=\bar \eta_{0}^{\alpha, \delta}(R)$ is a radial function, for instance as in Section \ref{sec:initialdata}. First, the ill-posedness of the \eqref{eq:leading1} is stated below.
%%%%%
\begin{Prop}\label{prop:expl}
There exist $0<\alpha_0 \ll 1$  such that, for any $0<\alpha \le \alpha_0 $ and any $\delta>0$, there exist initial data $(\Omega_{0, \mathrm{app}}^{\alpha, \delta}(R, \beta), \eta_{0, \mathrm{app}}^{\alpha, \delta}(R, \beta), \xi_{0, \mathrm{app}}^{\alpha, \delta}(R, \beta))$ of the following form 
\begin{align}
 \Omega_{0,\mathrm{app}}^{\alpha, \delta} (R, \beta) = \bar g_0^{\alpha, \delta} (R) \sin (2\beta), \qquad \eta_{0, \mathrm{app}}^{\alpha, \delta} (R, \beta)= \bar \eta_0^{\alpha, \delta}(R), \qquad  \xi^{\alpha, \delta}_{0, \mathrm{app}}(R, \beta)= 0,
\end{align}
where $\bar g_{0, \mathrm{app}}^{\alpha, \delta} (R), \bar \eta_{0, \mathrm{app}}^{\alpha, \delta} (R) \in C_c^\infty([5/8, \infty))$ with
$$\|(\Omega_{0, \mathrm{app}}^{\alpha, \delta}, \eta_{0, \mathrm{app}}^{\alpha, \delta}, \xi_{0, \mathrm{app}}^{\alpha, \delta})\|_{L^\infty(\R^2)} =  \delta,$$ %\quad \|(\Omega_{0, \mathrm{app}}^{\alpha, \delta}, \eta_{0, \mathrm{app}}^{\alpha, \delta}, \xi_{0, \mathrm{app}}^{\alpha, \delta})\|_{\cH^{N+1}} =  C_{N+1},$$
such that the solution $(\Ome^{\alpha, \delta}(t), \eta_\mathrm{app}^{\alpha, \delta}(t), \xi_\mathrm{app}^{\alpha, \delta}(t))$ to the Cauchy problem associated with \eqref{eq:leading1} satisfies
\be\label{eq:eta-growth}\ba
\|\et^{\alpha, \delta}(t) \|_{L^\infty(\R^2)}  &\ge \|\eta_{0, \mathrm{app}}^{\alpha, \delta}\|_{L^\infty (\R^2)} \left(1+\frac{c_2 t}{2\alpha}C_0\right)^{\frac{1}{c_2}},
\ea\ee
where $C_0=\sup\limits_{R \in \mathrm{supp}(\bar g_0^{\alpha, \delta}(R))} \int_R^\infty \frac{\bar g_0^{\alpha, \delta}(s)}{s} \, ds$ and $c_2>0$ is independent of $\alpha$. In particular: 
\be\label{eq:eta-growth1}\ba
\sup\limits_{t \in [0, T^*(\alpha)]} \|\et^{\alpha, \delta}(t) \|_{L^\infty(\R^2)}  &\ge  \|\eta_{0, \mathrm{app}}^{\alpha, \delta}\|_{L^\infty (\R^2)} \left(1+\log |\log \alpha|\right)^{\frac{1}{c_2}},
\ea\ee
where $T^*(\alpha) =  C{\alpha}\log |\log (\alpha)|$ and $C>0$ is independent of $\alpha$. 
\end{Prop}
%%%%%
\begin{comment}
\textcolor{red}{If needed, one could take $\xi_{0, \mathrm{app}}^{\alpha, \delta}=0$ and $\xi_{0,r}^{\alpha, \delta}=\xi_0$ given that $\xi_0$ is small (to check!). This would allow for an explicit formula for $\csi$.}
\end{comment}
\begin{proof}[Proof of Proposition \ref{prop:expl}]
The lower bound for $\eta_\mathrm{app}$ is a direct consequence of the explicit formula \eqref{eq:eta-sol}, namely
\be
\eta_\mathrm{app} (t, R, \beta)= \bar \eta_0^{\alpha, \delta} (R) \exp{\left(  \frac{1}{2\alpha} \int_0^t {\cL(\Omega_\mathrm{app} (\tau))}\, d\tau\right)},
\ee
and the lower bounds \eqref{eq:g-upperandlower} for $g(t)$ satisfying \eqref{eq:g}. Then the growth in $\alpha$ simply follows from the choice of $T^*(\alpha)$. 
\end{proof}
%%%%
\begin{Rmk}[On the point-wise growth of solutions for short times]
    From \eqref{eq:eta-growth}, it is evident that the point-wise growth of solutions for short times is at least linear. This observation implies that, despite their substantial differences, the linear system \eqref{eq:2Dbouss-grad-lin} (governed by the semigroup $\exp({R_1 t})$, the basis for the mild ill-posedness result in \cite{tarek3}, as discussed below \eqref{eq:2Dbouss-grad-lin}) and our leading-order model \eqref{eq:leading1} exhibit the same point-wise behavior in time, at least for suitable initial data.
    To provide further clarity, let us address three key points:
    \begin{enumerate}
        \item The choice of the (double) logarithmic timescale $T^*(\alpha)$ in Proposition \ref{prop:expl} ensures norm inflation from the inequality \eqref{eq:eta-growth}, as seen in \eqref{eq:eta-growth1}. While considering a shorter time window, such as $T^*=C{\alpha}$, is possible, it does not result in inflation but merely leads to mild ill-posedness. %In this case, the primary advantage over the mild ill-posedness in \cite{tarek3} lies in the potential to choose a shorter time scale (note that $C_0$ can be chosen to be small and it is essentially an additional degree of freedom with respect to the proof of \cite[Theorem 9.6]{tarek3}, where $\eps, \log N$ play the role of our $\delta, \alpha^{-1}$ respectively). 
        \item A (single) logarithmic timescale might seem sufficient for our purposes initially. Strictly speaking, this is true if our goal is solely to demonstrate the norm inflation of the leading-order model. However, the requirement for a double logarithmic timescale, hence a shorter time, stems from the estimates of the remainder terms, particularly the error introduced by approximating the transport terms (see Remark \ref{rmk:largest}).
        \item The approximated model derived by Elgindi \& Khalil exhibits weaker point-wise growth compared to ours, specifically of logarithmic type (see \cite[Theorem 1]{tarek2}). However, the timescale of validity of the approximation is comparable to ours, owing to the shared underlying principles (specifically, some of the remainder terms are exactly the same).
    \end{enumerate}
\end{Rmk}
%%%%
\begin{Rmk}[$\Omega$ does not blow up]
    One has from the explicit expression for $g(t)$ in \eqref{eq:g-formula} and the estimates of Proposition \ref{prop:LOM} that there exist $c, C>0$ such that
\begin{align}\label{est:g-inf}
   c\|\bar g_0^{\alpha, \delta}\|_{L^\infty} \le  \|g(t)\|_{L^\infty} \le C \|\bar g_0^{\alpha, \delta}\|_{L^\infty}.
\end{align}
Now, notice that \eqref{eq:def-g} and the estimates of (4) of Lemma \ref{prop:LOM} yield 
\begin{align*}
\Ome(t)&=g(t)+\int_0^t\et(\tau)d\tau\le g(t)+ \int_0^t\left(1+\frac{c_2 \tau}{2\alpha}C_0\right)^{\frac{1}{c_2}}d\tau\\
&= g(t)+\frac{2\alpha}{C_0(1+c_2)}\left(\left(1+\frac{t}{2\alpha}C_0c_2\right)^{\frac{c_2+1}{c_2}}-1\right).
\end{align*}
Therefore $\Ome (t)$ does not blow up in $\left[0, T^*(\alpha)=C{\alpha}\log |\log \alpha|\right]$. As it will follow from the remainder estimates, the vorticity $\omega (t, \cdot)$ (solving \eqref{eq:2Dbouss-grad}) does not blow up as well (it may explode for later times).
\end{Rmk}
%%%%%
\begin{Rmk}[An explicit formula for $\csi$] \label{rmk:explicitcsi}
One can easily solve for $\csi$ in \eqref{eq:leading1}, obtaining the explicit formula 
\be\label{eq:csi-explicit}
\csi (t, \cdot) =  \left(1-\exp{\left(-\frac{1}{2\alpha} \int_0^t {\cL(g(\tau))}\, d\tau\right)}\right). 
\ee
Moreover, using \eqref{eq:g-upperandlower}, the above formula allows to get an upper bound in $L^\infty (\R^2)$, 
\be
\|\csi(t)\|_{L^\infty (\R^2)} = \| \de_y \rho_{\mathrm{app}}(t)\|_{L^\infty} \le 1+ \left(1+ \frac{c_2 t}{2\alpha} C_0 \right)^{-\frac{1}{c_2}}, 
\ee
with $c_2$ and $C_0$ as in Lemma \ref{prop:LOM} and Proposition \ref{prop:expl} respectively.
In the time interval $[0, T^*(\alpha)]$ as in Proposition \ref{prop:expl}, it holds
\be
\|\csi(t)\|_{L^\infty (\R^2)} = \| \de_y \rho_{\mathrm{app}}(t)\|_{L^\infty} \le 1+ \left(1+ {\log |\log \alpha|}  \right)^{-\frac{1}{c_2}}  < 3,
\ee
so that $\csi (t, \cdot)$ does not blow up in $L^\infty (\R^2)$ and $\de_y \rho (t, \cdot )$ (solving \eqref{eq:2Dbouss-grad}) does not blow up in the time interval $[0, T^*(\alpha)]$. 
\end{Rmk}

%%%%%
 The result below provides $\cH^k$ estimates of the Leading Order Model \eqref{eq:leading1}.
%%%%%
 \begin{Prop}[Estimates on the \eqref{eq:leading1}]\label{lem:35} 
For any $k \ge 3$, let $(\Ome^{\alpha, \delta}(t), \eta_\mathrm{app}^{\alpha, \delta}(t), \xi_\mathrm{app}^{\alpha, \delta}(t))$ be a solution to \eqref{eq:leading1} with initial data $(\Omega_{0, \mathrm{app}}^{\alpha, \delta}, \eta_{0, \mathrm{app}}^{\alpha, \delta}, \xi^{\alpha, \delta}_{0, \mathrm{app}}) \in \cH^k$, where 
%%%%%
\begin{align}
\Omega^{\alpha, \delta}_{0, \mathrm{app}}=\bar{g}_0^{\alpha, \delta} (R)\sin(2\beta),\quad 
\eta^{\alpha, \delta}_{0, \mathrm{app}}= \bar \eta_0^{\alpha, \delta}(R),\quad  \xi^{\alpha, \delta}_{0, \mathrm{app}}=0,
\end{align}
%%%%%
and where $(\bar{g}_0^{\alpha, \delta} (R), \bar{\eta}^{\alpha, \delta}_0(R)) \in C_c^\infty ([5/8, \infty))$
with $\bar g_0^{\alpha, \delta}(R)\ge 0$. Then, there exist constants $0<C_k=C_k(\|\bar g_0^{\alpha, \delta}\|_{\cH^k}, \|\bar \eta_0^{\alpha, \delta}\|_{\cH^k})$ as defined in \eqref{eq:size-initial} such that the following estimates hold 
\be\ba\label{eq:est-LOM}
\|\Ome(t)\|_{\cH^k}&\le  \left(\|\bar g_0^{\alpha, \delta}\|_{\cH^k}+\alpha\exp\left({\frac{C_k}{\alpha} t}\right) \right) \le C_{k} + \alpha\exp\left({\frac{C_k}{\alpha} t}\right),\\
\|\et(t)\|_{\cH^k} &\le  \|\bar \eta_0^{\alpha, \delta}\|_{\cH^k}\exp\left({\frac{C_k}{\alpha} t} \right)\le C_{k}\exp \left({\frac{C_k}{\alpha} t}\right),\\
\|\csi(t)\|_{\cH^k} &\le  (C_k+\alpha \|\bar \eta_0^{\alpha, \delta}\|_{\cH^{k+1}}  )\exp\left({\frac{C_k}{\alpha} t}\right) \le C_k \exp\left({\frac{C_k}{\alpha} t}\right).
\ea
\ee
Moreover, the function $\Ps(\Ome)(R)=\frac{1}{4 \alpha}\cL (\Ome)(R)\sin (2\beta)$, as in \eqref{eq:odd},
satisfies the estimates below
\be\label{eq:est-psi2}
\|\Ps (\Ome)\|_{\cW^{k+1,\infty}} \le   \frac{C_{k+1}}{\alpha}, \qquad \|\Ps (\Ome)\|_{\cH^{k+1}} \le   \frac{C_{k+1}}{\alpha}.
\ee
\end{Prop}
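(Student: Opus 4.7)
The plan is to exploit the semi-explicit structure of \eqref{eq:leading1}. Since $\et(0)=\bar\eta_0^{\alpha,\delta}(R)$ is radial, the explicit formula \eqref{eq:eta-sol} shows that $\et(t,R,\beta)$ remains radial for all $t$, so via \eqref{eq:def-g} one has the decomposition $\Ome(t)=g(t)+\int_0^t\et(\tau)d\tau$ where $g$ solves the scalar transport equation \eqref{eq:g}, and the equation for $\csi$ in \eqref{eq:leading1} becomes linear inhomogeneous in $\csi$ once $\Ome$ is known. Accordingly I would bound in turn the operator $\Ps$, the scalar $g$, the factor $\et$ (via \eqref{eq:eta-sol}), reassemble $\Ome$ by time integration, and finally handle $\csi$ by a standard energy estimate.

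First, the Hardy-type bound \eqref{eq:hardy} gives $\|\cL(f)\|_{\cH^{k+1}}\lesssim\|f\|_{\cH^{k+1}}$, and its $L^\infty$-analogue follows directly from \eqref{def:L}; combined with Lemma \ref{prop:LOM}, this yields $\|\cL(g(t))\|_{L^\infty}\lesssim \cG_0(\bar g_0^{\alpha,\delta})\lesssim C_k$ uniformly in $t\in[0,T^*(\alpha)]$. The estimates \eqref{eq:est-psi2} for $\Ps$ then follow at once from \eqref{eq:psiapp} and the $1/(4\alpha)$ prefactor. For $g$, rather than a naive energy estimate on \eqref{eq:g} (which would produce a spurious $e^{C_k t/\alpha}$ because of the $1/(2\alpha)$ in the advection), I would exploit the explicit formula \eqref{eq:g-formula} to write $g(t,R,\beta)=\bar g_0^{\alpha,\delta}(R)F(\beta,t)$ with $F(\beta,t)=2a(t)\tan\beta/(1+a(t)^2\tan^2\beta)$ and $a(t)=\exp(-\tfrac{1}{\alpha}\int_0^t\cL(g(\tau))d\tau)$. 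Lemma \ref{prop:LOM}(3) keeps $a(t)$ bounded above and away from zero up to logarithmic factors of $\alpha$ on $[0,T^*(\alpha)]$, which lets one bound $F$ and all its $\beta$-derivatives in $L^\infty\cap L^2$ by constants absorbable into $C_k$. This produces $\|g(t)\|_{\cH^k}\lesssim C_k$ uniformly in $t$.

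Third, from \eqref{eq:eta-sol} one has $\et(t,R,\beta)=\bar\eta_0^{\alpha,\delta}(R)\,E(t,R)$ with $E(t,R):=\exp(\tfrac{1}{2\alpha}\int_0^t\cL(\Ome(\tau))d\tau)$. By Faà di Bruno, $\cH^k$-derivatives of $E$ produce $E$ times polynomials in the $\cH^k$-derivatives of $\tfrac{1}{\alpha}\int_0^t\cL(\Ome)d\tau$, each of which is controlled by $\tfrac{t}{\alpha}\|\cL(\Ome)\|_{\cH^k}\lesssim C_k$ on $[0,T^*(\alpha)]$ via the first step, while $E$ itself is bounded pointwise by $e^{C_k t/\alpha}$ by Lemma \ref{prop:LOM}(3). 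This gives $\|\et(t)\|_{\cH^k}\lesssim C_k e^{C_k t/\alpha}$. Reassembling,
\[\|\Ome(t)\|_{\cH^k}\le\|g(t)\|_{\cH^k}+\int_0^t\|\et(\tau)\|_{\cH^k}d\tau\lesssim C_k+C_k\int_0^t e^{C_k\tau/\alpha}d\tau\lesssim C_k+\alpha\, e^{C_k t/\alpha}.\]
Finally, for $\csi$ I would run an $\cH^k$ energy estimate on the third line of \eqref{eq:leading1}, viewed as a linear inhomogeneous equation in $\csi$. The transport coefficient $\sin(2\beta)\cL(\Ome)/(2\alpha)$ and the multiplicative source $\cL(\Ome)/(2\alpha)$ are both controlled in $\cW^{k,\infty}$ by $C_k/\alpha$ by Step 1; commutator estimates combined with Grönwall's inequality and the smallness $\|\xi_{0,\mathrm{app}}\|_{\cH^k}\lesssim\alpha\|\bar\eta_0^{\alpha,\delta}\|_{\cH^{k+1}}\lesssim\alpha C_k$ (built into our choice of datum) then give $\|\csi(t)\|_{\cH^k}\lesssim(1+\alpha C_k)e^{C_k t/\alpha}$.

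The main obstacle will be the bound on $g$: the naive energy method fails to distinguish the time-independent $C_k$-bound from a $e^{C_k t/\alpha}$ growth, because the $1/\alpha$ factor in front of the advection in \eqref{eq:g} is genuinely present. Resolving this through \eqref{eq:g-formula} requires a careful analysis of the $\beta$-dependence of $F$ and its derivatives near the endpoints $\beta=0,\pi/2$ where $\tan\beta$ degenerates; one checks that $\partial_\beta^j F\sim a^{-j}$ in $L^\infty$, which is bounded by a power of $\log|\log\alpha|$ on $[0,T^*(\alpha)]$ and hence absorbable in $C_k$. This uniform-in-time $\cH^k$ bound on $g$ is exactly what isolates the time-independent contribution $C_k$ from the remainder $\alpha e^{C_k t/\alpha}$ in the final bound for $\Ome$.
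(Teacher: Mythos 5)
Your overall route coincides with the paper's: factor $\Omega_{\mathrm{app}}(t)=g(t)+\int_0^t\eta_{\mathrm{app}}(\tau)\,d\tau$, exploit the explicit formulas \eqref{eq:g-formula} and \eqref{eq:eta-sol}, assemble $\Omega_{\mathrm{app}}$ by time integration, and handle $\xi_{\mathrm{app}}$ by a Gr\"onwall energy estimate. Your Fa\`a-di-Bruno treatment of $\eta_{\mathrm{app}}$ is a legitimate alternative to the paper's energy estimate (the paper's source even contains a commented-out block performing essentially your computation), and the bounds on $\Psi_{\mathrm{app}}$, the energy argument for $\csi$, and the reassembly of $\Omega_{\mathrm{app}}$ all match.

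The genuine gap is in the treatment of $g$. You write $g(t,R,\beta)=\bar g_0^{\alpha,\delta}(R)\,F(\beta,t)$ with $a(t)=\exp\bigl(-\tfrac1\alpha\int_0^t\cL(g(\tau))\,d\tau\bigr)$, presenting $a$ and $F$ as $R$-independent, and then only analyze $\beta$-derivatives of $F$ near the degeneracies of $\tan\beta$. But $\cL(g(\tau))(R)$ is a genuine function of $R$ (see \eqref{def:L}), so $a=a(t,R)$ and $F=F(\beta,R,t)$. Since $\cH^k$ tracks $R$-derivatives and weighted $R$-derivatives, a bound on $\|g(t)\|_{\cH^k}$ requires controlling $\partial_R^j a$, which by Leibniz and Fa\`a di Bruno produces products of $\tfrac1\alpha\int_0^t\partial_R^h\cL(g(\tau))\,d\tau$; estimating these (and the uniform-in-$\alpha$ bounds on $\partial_R^h\cL(g)$ that enter them) is precisely the content of Lemma~\ref{StimeLg} (see \eqref{leggere1}, \eqref{pesate1}, \eqref{eq:est-g}), which the paper's proof simply invokes. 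Your $\beta$-analysis alone cannot produce $\|g(t)\|_{\cH^k}\lesssim C_k$. A secondary caveat: the claim that $\partial_\beta^j F\sim a^{-j}$ is ``absorbable in $C_k$'' needs care, because in this proposition $C_k$ is fixed by \eqref{eq:size-initial} to be $\sim\delta|\log|\log\alpha||^{1/4}$, while $a^{-j}$ on $[0,T^*(\alpha)]$ carries a $j$-dependent power of $|\log|\log\alpha||$ and a $\delta^{-1}$; the paper sidesteps this by routing the dependence through the constants of Lemma~\ref{StimeLg} rather than by direct absorption into $C_k$.
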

Before providing a proof of Proposition \ref{lem:35}, we state and prove a preliminary result.
\begin{Lem}\label{StimeLg}
For any $ k\ge 3$, if $g(t)=g(t)(t, R, \beta)$ fulfills formula \eqref{eq:g-formula} with $g(t)|_{t=0}=\bar g_0^{\alpha, \delta}(R) \in \cH^{k+1}_R$ such that $0<\bar g_0^{\alpha, \delta} (R) \in C_c^\infty ([1, \infty))$, then there exists $0<\alpha_0 \ll 1$ small enough such that the following estimates hold for all $\alpha \le \alpha_0 $
\begin{align}
\|\mathcal{L}(g(t))\|_{L^{\infty}_R}&\leq C_{1} ,\quad \|\mathcal{L}(g(t))\|_{L^{2}_R}\leq C_{0},\label{leggere10}\\ 
\|(1+R)\partial_R\mathcal{L}(g(t))\|_{L^{\infty}_R}&\leq C_{2},\quad \| (1+R)\partial_R\mathcal{L}(g(t))\|_{L^{2}_R}\leq C_{1} \label{pesate10},\end{align}
and, for $k \ge 2$, 
\begin{align}
\|\partial_R^{k+1}\mathcal{L}(g(t))\|_{L^{\infty}_R}&\leq C_{k+1},\quad \|\partial_R^{k+1}\mathcal{L}(g)\|_{L^{2}_R}\leq C_{k},\label{leggere1}\\ 
\|R^{k+1}\partial_R^{k+1}\mathcal{L}(g(t))\|_{L^{\infty}_R}&\leq C_{k+1},\quad \| R^{k+1}\partial_R^{k+1}\mathcal{L}(g(t))\|_{L^{2}_R}\leq C_{k}
\label{pesate1},\end{align}
and the function $g(t)$ satisfies
\be\label{eq:est-g}
\|g(t)\|_{\cH^k} \le C_k,
\ee
where the constants $C_k=C_k(\|\bar g_0^{\alpha, \delta}\|_{\cH^k_R})$ depend only on the initial data.
\end{Lem}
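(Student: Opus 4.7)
The plan is to derive all stated estimates directly from the explicit formula \eqref{eq:g-formula} for $g(t)$, together with the upper bound in \eqref{eq:Lg-bounds} and the fact that $\supp \bar g_0^{\alpha,\delta}\subset[1,\infty)$ is bounded. Since $\bar g_0^{\alpha,\delta}\ge 0$, the lower bound in \eqref{eq:g-upperandlower} forces $\int_0^t \mathcal{L}(g(\tau))\,d\tau \ge 0$, hence the exponential factor $\phi(t,R):=\exp\bigl(-\tfrac{1}{\alpha}\int_0^t \mathcal{L}(g(\tau))\,d\tau\bigr)$ lies in $[0,1]$. Combined with the upper estimate in \eqref{eq:Lg-bounds}, this gives the pointwise bound
\begin{equation*}
0\le \mathcal{L}(g(t))(R) \le c_2 \int_R^\infty \frac{\bar g_0^{\alpha,\delta}(s)}{s}\,ds,
\end{equation*}
which, because $1/s\le 1$ on the support and $\bar g_0^{\alpha,\delta}$ is compactly supported in $R$, proves both inequalities in \eqref{leggere10} at once (the $L^2_R$ bound follows by Cauchy--Schwarz on a bounded support).

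For \eqref{pesate10}--\eqref{pesate1} I would use the key identity obtained by differentiating \eqref{def:L} under the integral sign:
\begin{equation*}
\partial_R \mathcal{L}(g(t))(R) = -\frac{1}{\pi R}\int_0^{2\pi} g(t,R,\beta)\sin(2\beta)\,d\beta .
\end{equation*}
Substituting \eqref{eq:g-formula}, the $\beta$-integral equals $\bar g_0^{\alpha,\delta}(R)\cdot H(\phi(t,R))$ with $H$ an explicit smooth function satisfying $|H(\phi)|\lesssim 1$ for $\phi\in[0,1]$. Since $R\ge 1$ on $\supp\bar g_0^{\alpha,\delta}$, the weight $(1+R)/R$ is bounded and \eqref{pesate10} follows. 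For the higher-order estimates \eqref{leggere1}--\eqref{pesate1}, I would iterate by induction on $k$: Leibniz gives
\begin{equation*}
\partial_R^{k+1}\mathcal{L}(g(t))(R) = -\frac{1}{\pi}\sum_{j=0}^{k}\binom{k}{j}(-1)^j j!\, R^{-j-1}\int_0^{2\pi}\partial_R^{k-j}g(t,R,\beta)\sin(2\beta)\,d\beta,
\end{equation*}
and each $\partial_R^m g$ is evaluated from \eqref{eq:g-formula} via Faà di Bruno, producing either derivatives of $\bar g_0^{\alpha,\delta}$ (bounded by $\|\bar g_0^{\alpha,\delta}\|_{\cH^{k+1}_R}\sim C_{k+1}$) or derivatives of $\phi(t,R)$, which unfold as $\partial_R\phi=-\tfrac{\phi}{\alpha}\int_0^t\partial_R\mathcal{L}(g(\tau))\,d\tau$. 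The $1/\alpha$ factors are then absorbed because $\phi\le 1$ and because $\int_0^t\partial_R^{j}\mathcal{L}(g(\tau))\,d\tau$ is controlled by the previously established lower-order estimates integrated over the short time window of interest, giving a closed bootstrap in which the weights $R^{k+1}$ and the inverse powers $R^{-j-1}$ remain harmless on $\supp \bar g_0^{\alpha,\delta}$.

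For the $\cH^k$ bound \eqref{eq:est-g} on $g(t)$ I would perform a standard energy estimate on the transport equation \eqref{eq:g}. The transport velocity $\tfrac{\mathcal{L}(g)}{2\alpha}\sin(2\beta)$ has Lipschitz norm controlled by $\alpha^{-1}\bigl(\|\mathcal{L}(g)\|_{L^\infty}+\|(1+R)\partial_R\mathcal{L}(g)\|_{L^\infty}\bigr)\lesssim C_2/\alpha$ thanks to the bounds just proved; a Gronwall argument then yields $\|g(t)\|_{\cH^k}\lesssim\|\bar g_0^{\alpha,\delta}\|_{\cH^k}\exp(C_k t/\alpha)$, and on the relevant time window $t\lesssim \alpha\log|\log\alpha|/C_{N+1}$ this exponential is at most a polylogarithmic factor that gets absorbed into the generic constant $C_k$. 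The main technical obstacle I anticipate is precisely the bookkeeping in the higher-derivative induction: each $\partial_R$ hitting the exponential produces a factor $1/\alpha$ which must be compensated by a time integral of a lower-order estimate, and the Faà di Bruno combinatorics must be organized so that no spurious $\alpha^{-k}$ divergence appears in the final constants---this is where the uniform bound $\phi\le 1$ and the compact support of $\bar g_0^{\alpha,\delta}$ do the heavy lifting.
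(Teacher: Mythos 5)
Your proof gets the easy part right — the $L^\infty$ and $L^2$ bounds on $\mathcal L(g(t))$ itself follow from monotonicity of the exponential factor $\phi(t,R)=\exp\bigl(-\tfrac1\alpha\int_0^t\mathcal L(g(\tau))\,d\tau\bigr)\in[0,1]$ and the upper bound in \eqref{eq:Lg-bounds}, as in the paper. The gap is in how you propose to absorb the $1/\alpha$ factors coming from $R$-derivatives hitting $\phi$.

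You claim the $\alpha^{-1}$ is cancelled ``because $\int_0^t\partial_R^j\mathcal L(g(\tau))\,d\tau$ is controlled by the previously established lower-order estimates integrated over the short time window of interest.'' This does not close. If $\|\partial_R^j\mathcal L(g(\tau))\|_{L^\infty}\lesssim C_{j+1}$ uniformly in $\tau$ and you integrate over $[0,t]$ with $t\lesssim \tfrac{\alpha}{C_{N+1}}\log|\log\alpha|$, you recover $\alpha^{-1}\cdot C_{j+1}t\sim \tfrac{C_{j+1}}{C_{N+1}}\log|\log\alpha|$, which with \eqref{eq:size-initial} is $\sim |\log|\log\alpha||^{3/4}$ — unbounded, and not a constant of the form $C_{k+1}$. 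Moreover the Lemma itself makes no reference to a time window, so a ``short time'' argument cannot be what is meant. The correct cancellation, which the paper exploits through the explicit differentiated formula \eqref{eq:der-Lg} and the lower bound in \eqref{eq:g-upperandlower}, is dynamical: $\phi(\tau,R)\lesssim\bigl(1+\tfrac{c_2}{2\alpha}\tau\,\mathcal G_0(R)\bigr)^{-2/c_2}$ decays in $\tau$, and on $\supp\bar g_0^{\alpha,\delta}$ one has $\mathcal G_0(R)\geq c_0>0$, so
\begin{equation*}
\int_0^t\phi(\tau,R)\,d\tau\;\lesssim\;\frac{\alpha}{\mathcal G_0(R)}\;\lesssim\;\alpha,
\end{equation*}
and it is this $\alpha$ — produced by integrating the decaying exponential — that cancels the $\alpha^{-1}$. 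Since $\partial_R\mathcal L(g(\tau))$ carries a prefactor $\bar g_0^{\alpha,\delta}(R)/R$ that localizes to the support, the bound is uniform. Without invoking this decay, the bootstrap you sketch does not produce $\alpha$-independent constants.

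The same issue breaks your proposed proof of \eqref{eq:est-g}. A Gronwall energy estimate on the transport equation \eqref{eq:g} with a velocity of Lipschitz size $C/\alpha$ gives $\|g(t)\|_{\mathcal H^k}\lesssim \|\bar g_0^{\alpha,\delta}\|_{\mathcal H^k}\,e^{Ct/\alpha}$, and this exponential is \emph{not} absorbable into $C_k$ on the relevant time scale (the exponent grows like $\log|\log\alpha|\cdot|\log|\log\alpha||^{-1/4}\to\infty$). The Lemma claims the uniform bound $\|g(t)\|_{\mathcal H^k}\leq C_k$, which is strictly stronger than what Gronwall can deliver; the paper obtains it by differentiating the explicit representation \eqref{eq:g-formula} directly and again using the $\int_0^t\phi\,d\tau\lesssim\alpha$ cancellation for every $R$-derivative landing on the exponential. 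You need to replace the energy-estimate step by this explicit computation.
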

\begin{proof}
By \eqref{eq:Lg-bounds}, it follows that
\be
\cL(g(t)) (R) \le  \int_R^\infty \frac{\bar g_0^{\alpha, \delta}(s)}{s} \exp{\left(-\frac 1 \alpha \int_0^t \cL(g(\tau)) \, d\tau \right)} \, ds,
\ee
and recalling from \eqref{eq:g-upperandlower} that $\cL(g(t)) \ge 0$ (being $\bar g_0^{\alpha, \delta}(R) >0$), one has, using  the embedding $H^1_R \hookrightarrow L^\infty$, that
\be \|\cL(g(t))\|_{L^2_R} \le \left\|\int_R^\infty \frac{\bar g_0^{\alpha, \delta}(s)}{s} \, ds\right\|_{L^2_R} \le  \|\bar g_0^{\alpha, \delta}\|_{L^2_R} \le   C_0,
\ee
\be
\|\cL(g(t))\|_{L^\infty} \le  \left\|\int_R^\infty \frac{\bar g_0^{\alpha, \delta}(s)}{s} \, ds\right\|_{L^\infty} \le  \|\bar g_0^{\alpha, \delta}\|_{H^1_R} \le    C_1.
\ee
For the derivative $\de_R$, we plug the expression of $g(t)$ in \eqref{eq:g-formula} inside the definition \eqref{def:L}, yielding
\be\ba\label{eq:Lg-formula}
\cL(g(t))&= \frac 1 \pi \int_R^\infty \int_0^{2\pi} \frac{4 (\sin^2\beta)  \exp\left(-\frac 1 \alpha \int_0^t \cL (g(\tau)) \, d \tau \right)}{1+(\tan^2\beta) \exp\left(-\frac 2 \alpha \int_0^t \cL (g(\tau)) \, d \tau \right)} \, \frac{\bar g_0^{\alpha, \delta}(s)}{s} \, ds \, d\beta\notag\\
&= 4 \int_R^\infty  \frac{\exp\left(-\frac 1 \alpha \int_0^t \cL(g(\tau)) \, d\tau\right)}{\left(1+ \exp\left(-\frac 1 \alpha \int_0^t \cL(g(\tau)) \, d\tau\right)\right)^2}\frac{\bar g_0^{\alpha, \delta}(s)}{s} \, ds.
\ea\ee
Applying $\de_R$ yields 
\be\label{eq:der-Lg}
\de_R \cL(g(t))= - \frac{4 \bar g_0^{\alpha, \delta}(R)}{R} \frac{\exp\left(-\frac 1 \alpha \int_0^t \cL(g(\tau)) \, d\tau\right)}{\left(1+ \exp\left(-\frac 1 \alpha \int_0^t \cL(g(\tau)) \, d\tau\right)\right)^2},
\ee
from which one has, using again the embedding $H^1_R \hookrightarrow L^\infty$, 
\be
\|\de_R \cL(g(t))\|_{L^\infty} \le  4\|\bar g_0^{\alpha, \delta}\|_{H^1_R} \le C_1.
\ee
Next, let us consider
\be
\ba
\|\de_R^2 \cL(g(t))\|_{L^\infty} & \lesssim \left\| \frac{\exp\left(-\frac 1 \alpha \int_0^t \cL(g(\tau)) \, d\tau\right)}{\left(1+ \exp\left(-\frac 1 \alpha \int_0^t \cL(g(\tau)) \, d\tau\right)\right)^2}\right\|_{L^\infty} \\
& \qquad \times \left(\left\| \de_R\left(\frac{\bar g_0^{\alpha, \delta}(R)}{R} \right)\right\|_{L^\infty}+\frac 1 \alpha  \left\|\frac{\bar g_0^{\alpha, \delta}(R)}{R} \, \left(\int_0^t \de_R \cL(g(s)) \, ds \right)\right\|_{L^\infty} \right)\\
& \lesssim  \left\|\exp\left(-\frac 1 \alpha \int_0^t \cL (g(\tau)) \, d \tau \right)\right\|_{L^\infty}\left(\|\bar g_0^{\alpha, \delta}\|_{\cH^2_R} + \frac 1 \alpha  \|\bar g_0^{\alpha, \delta}\|_{\cH^1_R} \left\|\left(\int_0^t \de_R \cL(g(s)) \, ds \right)\right\|_{L^\infty}\right) \\
&\le  \left\|\exp\left(-\frac 1 \alpha \int_0^t \cL (g(\tau)) \, d \tau \right)\right\|_{L^\infty}\left( C_2 + \frac{C_1}{\alpha} \left\|\left(\int_0^t \de_R \cL(g(s)) \, ds \right)\right\|_{L^\infty}\right).
\ea\notag
\ee
Plugging \eqref{eq:der-Lg} into the above and using \eqref{eq:g-upperandlower}, for $0 < \alpha \ll \alpha_0 $ small enough one has 
\be
\ba
\|\de_R^2 \cL(g(t))\|_{L^\infty} & \le  \frac{ C_1}{\alpha} \left\| \int_0^t \exp\left(-\frac 1 \alpha \int_0^\tau \cL(g(s)) \, ds \right)\right\|_{L^\infty}\le  \frac{C_1}{\alpha},
\ea
\ee
thanks to the (nonnegative) sign of $\cL (g(s))$ due to the fact that at the initial time $\bar g_0^{\alpha, \delta}(R)\ge 0$, which is ensured by \eqref{eq:g-formula}-\eqref{eq:Lg-bounds}.

Furthermore, using Fa\`a di Bruno formula \cite{faa}, yields that
\be
\ba
\|\de_R^{k+1} \cL(g(t))\|_{L^\infty}= 4\left\|  \sum_{\ell=0}^{k} \de^{k-\ell}_R \left(\frac{\bar g_0^{\alpha, \delta}(R)}{R}\right) \de_R^\ell \left(\frac{\exp\left(-\frac 1 \alpha \int_0^t \cL(g(\tau)) \, d\tau\right)}{\left(1+ \exp\left(-\frac 1 \alpha \int_0^t \cL(g(\tau)) \, d\tau\right)\right)^2}\right) \right\|_{L^\infty}\\
\le  4 \sum_{\ell=0}^{k}\left\|\partial_R^{k-\ell}\left(\frac{\bar{g}_0(R)}{R}\right)\exp\left(-\frac{1}{2\alpha}\int_0^t\mathcal{L}(g(\tau))d\tau\right)\sum_{\nu=1}^\ell \frac{(2\alpha)^{-\nu}}{\nu!}\sum_{h_1+\cdots h_{\nu}=\ell}\prod_{j=1}^\nu\int_0^t \partial_R^{h_j}\mathcal{L}(g(\tau))d\tau\right\|_{L^\infty}
\le C_{k+1},
\ea
\ee
by a finite number of recursive substitutions and the positive sign of $\cL (g (\tau))$. The $L^2$-based estimates are obtained by the same procedure, replacing the constant $C_{k+1}$ by $C_{k}=C_{k}(\|\bar g_0^{\alpha, \delta}\|_{\cH^{k}_R})$ in \eqref{eq:size-initial}. In fact, notice that $L^\infty$-based estimates lose one additional derivative with respect to the $L^2$-based estimates because of the embedding $H^1_R \hookrightarrow L^\infty$.
Finally, the proof of estimate \eqref{eq:est-g} follows the same lines using the explicit formula \eqref{eq:g-formula}.
%%%
\begin{comment}
From \eqref{def:L}, \eqref{eq:Lom-Lg}, one has that
$$\partial_R^j\mathcal{L}(g)=-\frac{1}{\pi}\int_0^{2\pi}\partial_R^{j-1}\Big(\frac{g(R,\beta)}{R}\Big)\sin(2\beta) d\beta.$$
We prove the $L^{\infty}$ bound in \eqref{pesate}
\begin{align*}
\|R^j\partial_R^j \cL\|_{L^{\infty}}&\leq  C_j\sum_{i=1}^{j-1}\left\|\int_0^{2\pi}R^i\partial_R^ig(R,\beta)\sin2\beta d\beta\right \|_{L^{\infty}}\leq C_j\sum_{i=0}^{j-1}\|\|\sin{2\beta}\|_{L^2_{\beta}}\|R^{i}\partial_R^ig(R,\beta)\|_{L^2_{\beta}}\|_{L^{\infty}_R}\\
&\leq C_j\sum_{i=0}^{j-1}\|\|R^i\partial_R^i g(R,\beta)\|_{L^2_{\beta}}\|_{H^1_{R}}\leq C_j \|g\|_{\cH^j},
\end{align*}
where $C_{j}$ is an harmless constants changing from line to line and where in the penultimate inequality we used the embedding $H^1_R\hookrightarrow L^{\infty}$.
Concerning the $L^{\infty}$ bound in  \eqref{leggere}, we proceed analogously: using the Sobolev embedding $L^{\infty}_R\hookrightarrow H^1_R$, $R\geq 1$ on the support of $g$,  we obtain
\begin{equation*}
\|\partial_R^j\mathcal{L}(g)\|_{L^{\infty}}\leq C \sum_{i=0}^{j-1}\left \|\int_0^{2\pi}\partial_R^ig(R,\beta)\sin(2\beta)d\beta\right\|_{L^2_R}+\left\|\int_0^{2\pi}\partial_R^{i+1}g(R,\beta)\sin(2\beta)d\beta\right\|_{L^2_R},
\end{equation*}
one concludes using Cauchy-Schwartz inequality, as done before for the proof of \eqref{pesate}.
\end{comment}
\end{proof}

\begin{proof}[Proof of Proposition \ref{lem:35}]
%We omit the subscript $ `` \text{app}''$ and the apex $\alpha, \delta$ in the course of the proof. 
 First, recalling that $\bar g_0^{\alpha, \delta}(R)\ge 0,$ the proof of the $\cH^k$ estimate \eqref{eq:est-psi2} on $\Ps$ follows directly from the upper bound on $\cL(\cdot)$ in \eqref{eq:Lg-bounds} and Lemma \ref{StimeLg}.
Next, we start with the estimate for $\et$. Taking the scalar product of the equation for $\et$ against $\et$, one gets
\be\ba
 \frac{d}{dt} \|\et\|_{L^2}^2 \le 2 \|\Ps\|_{L^\infty} \|\et\|_{L^2}^2 \le \frac{C_k}{\alpha}  \|\et\|_{L^2}^2,
\ea\ee
where we used the estimate of $\Ps$ in \eqref{eq:est-psi2}. About the $\cH^k$ estimate, notice by Leibniz formula that the worst term is given by
\begin{align*}
    \langle \de_t \partial^k \et,  \partial^k \et\rangle_{L^2} + \langle \de^k \Ps \de_\beta \et, \de^k \et \rangle_{L^2}= \alpha^{-1} \langle \de^k \cL(\Ome) \et, \de^k \et\rangle_{L^2}.
\end{align*}
Again, by means of the $\cH^k$ estimate of $\Ps$ in \eqref{eq:est-psi2}, one has that
\be
\frac{d}{dt} \|\et\|_{\cH^k}^2 \le 2 \|\Ps\|_{\cH^k} \|\et\|_{\cH^k}^2 \le \frac{C_k}{\alpha}  \|\et\|_{\cH^k}^2,
\ee
yielding the desired estimate for $\et$. 
Next, we deal with the estimates for $\csi$. Similarly to what has been done before, one has
\begin{align*}
    \frac{d}{dt}\|\csi\|^2_{\cH^k} \le 2\|  \Ps\|_{\cH^k}\|\csi\|_{\cH^k} (1+\|\csi\|_{\cH^k})\le \frac{C_{k}}{\alpha} \|\csi\|_{\cH^k} (1+\|\csi\|_{\cH^k}),
\end{align*}
where we used the $\cH_k$ estimate \eqref{eq:est-psi2} on $\Ps$.
This yields (simplifying the squares by a classical approximation procedure and Gronwall lemma)
%
\begin{comment}
\textcolor{green}{io scriverei direttamente la cosa integrale, magari citando il lemma di approssimazione che ci aveva mostrato roberta, perché la derivata della norma non esiste}
\be
 \frac{d}{dt}\|\csi\|_{\cH^k}\le \frac{C_{k}}{\alpha} (1+\|\csi\|_{\cH^k}),
\ee
which rewrites as 
\be
 \frac{d}{dt}(1+\|\csi\|_{\cH^k})\le \frac{C_{k}}{\alpha} (1+\|\csi\|_{\cH^k}),
\ee
and therefore
\end{comment}
%
the desired estimate for $\csi$. 
Finally, we deal with $\Ome$.
Recall from \eqref{eq:def-g} that $$\Ome=g + \bar \eta_0^{\alpha, \delta}(R) \int_0^t \exp \left(\frac{1}{2\alpha} \int_0^\tau \cL(g) \, d\mu \right) \, d\tau=g+\int_0^{t}\et(\tau)\, d\tau,$$ where $g$ satisfies \eqref{eq:g}.% which can be written as 
%$$\de_t g + \frac{\cL(\Ome)}{2\alpha} \sin (2\beta) \de_\beta g=0.$$ 
First, recall the estimate $\|g(t)\|_{\cH^k} \le C_k $ in \eqref{eq:est-g}. By using that $\|\et\|_{\cH^k} \le C_k\exp\left({\frac{C_k}{\alpha}t}\right)$, one has that
\be\ba
\|\Ome (t) \|_{\cH^k} & \le \|g (t)\|_{\cH^k} + \int_0^t \|\et (\tau) \|_{\cH^k} \, d\tau \le C_k + \int_0^t C_k\exp\left({\frac{C_k}{\alpha} \tau}\right) \, d \tau  \le C_k + \alpha\left({\frac{C_k}{\alpha} t}\right).
\ea\ee
%%%%%
\end{proof}

%%%%%
\section{Remainder estimates}\label{sec:rem}

In this section, we derive suitable estimates for the remainder terms 
\begin{equation}\label{def:omega-r}
    \Omega_r:=\Omega-\Ome, \quad \eta_r:=\eta-\et, \quad \xi_r:=\xi-\csi,
\end{equation}
where 
$$(\Omega, \eta, \xi)=(\Omega (t, R, \beta),\eta (t, R, \beta),\xi (t, R, \beta))=(\omega (t, x, y), \de_x \rho (t, x, y), \de_y \rho (t, x, y))
$$ 
denotes a solution to the full system \eqref{eq:2Dbouss-grad} with initial data $(\omega_0^{\alpha, \delta}, \de_x \rho_0^{\alpha, \delta}, \de_y\rho_0^{\alpha, \delta})$ as in Theorem \ref{thm:main}
and $$(\Ome, \et, \csi)$$ denotes the solution to \eqref{eq:leading1} with the initial data $(\Omega_{0, \mathrm{app}}^{\alpha, \delta},\eta_{0, \mathrm{app}}^{\alpha, \delta},\xi_{0, \mathrm{app}}^{\alpha, \delta})$ as in Proposition \ref{prop:expl}.
The main result of this section is concerned with smallness of the remainders in terms of the $L^{\infty}$ norm.

\begin{Prop}\label{prop:rem}
  Let $N\geq 3$ and introduce 
    \begin{equation}\label{def:functional}F(t):=\|\Omega_{r}(t)\|_{\cH^N}+\|\eta_{r}(t)\|_{\cH^N}+\|\xi_{r}(t)\|_{\cH^N}.
    \end{equation}
    Then 
    \begin{equation*}
        F(t)\lesssim \sqrt{\alpha}
        %C_{N+1}\sqrt{\alpha},
    \end{equation*}
    for all $0\leq t\leq T$ with $T=\frac{\alpha}{4C_{N+1}}\log|\log\alpha|$ where $C_{N+1}$  depends on the $\cH^{N+1}$-norm of the initial data $(\omega_0^{\alpha, \delta}, \de_x \rho_0^{\alpha, \delta}, \de_y\rho_0^{\alpha, \delta})$ as in \eqref{eq:size-initial}. In particular,
    \begin{equation}\label{eq:est-rim-Linfty}
        \|\Omega_r(t)\|_{L^{\infty}}+ \|\eta_r(t)\|_{L^{\infty}}+ \|\xi_r(t)\|_{L^{\infty}}\lesssim \sqrt{\alpha}
        %\leq C_{N+1}\sqrt{\alpha}
    \end{equation}
    for all $0\leq t\leq T$ with $T=\frac{\alpha}{4C_{N+1}}\log|\log\alpha|$.
\end{Prop}
Note that the $\cH^N$-norm of the initial data of the remainders is given by \eqref{eq:data eta rem} and \eqref{eq:data xi rem} and it is of order $F(0)\lesssim \sqrt{\alpha}$. 
While the system of equations satisfied by $(\Omega_r,\eta_r,\xi_r)$ is derived in Section \ref{sec:system rem}, the proof of Proposition \ref{prop:rem} is achieved by means of energy estimates in the Sobolev spaces $\cH^N$ in Section \ref{sec:est rem}.

%We adopt the notation $\Omega_r:=\Omega-\Omega_{\mathrm{app}}$, where $\Omega$ solves the full equation and $\Omega_{\mathrm{app}}$ is the solution to the \eqref{eq:leading1}. Similarly, we introduce $\eta_r:=\eta-\eta_{\mathrm{app}}$ and $\xi:=\xi-\xi_{\mathrm{app}}$. 
For the stream function $\Psi$ as in \eqref{eq:ellittica}, we set the notation
\begin{align}\label{def:psi-r}
\Psi_r(\Omega):&=\Psi(\Omega)-\Ps(\Ome) = \hE(\Omega)+\mathcal{R}^\alpha(\Omega) + \Ps(\Omega_r)
\end{align}
where $\Ps(\Omega), \mathcal{R}^\alpha(\Omega)$ are defined in \eqref{eq:psiapp}-\eqref{def:R} respectively, while 
$$\hE(\Omega)=\Psi(\Omega)-\Psa(\Omega)=\Psi(\Omega)-\Ps(\Omega) -\mathcal{R}^\alpha (\Omega)$$
is given in \eqref{eq:psi-main}. 
\begin{Rmk}
Note that due to the explicit formula of $\Ome$, see \eqref{eq:def-g}, the approximate stream function $\Ps(\Ome)$ in \eqref{eq:leading1} only contains the odd component as in \eqref{eq:odd}. In contrast, $\Ps (\Omega_r)$ does not have any symmetry and it is therefore given by the full formula in \eqref{eq:psiapp}.
\end{Rmk}
%
% The decomposition \eqref{def:psi-r} leads to the following estimates.
% \begin{Lem}[Elliptic estimates for $\Psi_r$]\label{lem: estimates psir}
% The following estimates for $\Psi_r$ hold:
% \begin{align}\label{eq:crucial-r}
%     \alpha\|\de_{\beta \beta} \Psi_r \|_{H^k}+\alpha \|R \de_{R\beta} \Psi_r\|_{H^k} + \alpha^2 \|R^2 \de_{RR} \Psi_r\|_{H^k}  &\le \bar{C}(\|\Omega\|_{\mathcal{H}^{k}}+\|\Ome\|_{\cH^k}),\\
% {\alpha}\|\de_{\beta \beta} (R^k \de_R^k\Psi_r) \|_{L^2}+{\alpha} \|R \de_{R\beta} (R^k \de_R^k \Psi_r)\|_{L^2}+{\alpha^2}\|R^2 \de_{RR}(R^k \de_R^k \Psi_r)\|_{L^2} &\le \bar{C}( \|R^k \de_R^k \Omega\|_{L^2} +\|R^k \de_R^k \Ome\|_{L^2}),
% \end{align}
% where the constant value $\bar C >0$ is independent of $\alpha>0$. 
% \end{Lem}
% \textcolor{red}{Norma $L^2$ di $\Ome$ in (4.2)?}

% \begin{proof}
%     The proof is a direct application of Lemma \ref{lam:Rest}.
% \end{proof}

%
%%%%%
\subsection{Elliptic estimates}
\noindent This section provides estimates of $\Psi$ solving the elliptic equation \eqref{eq:ellittica},
in terms of its decomposition in $\Ps, \Psi_2, \Psi_\mathrm{err}$ as given in Theorem \ref{thm:tarek} due to Elgindi \cite{tarek1}, see also \cite{drivas}.

\begin{Prop}[Elliptic estimates for $\Ps(\Omega), \Psa(\Omega)$, $\hE(\Omega)$ and $\mathcal{R}(\Omega)$]\label{lam:Rest}
The following estimates hold for the regular part:
\begin{align}
\label{eq:elliptic1-rem}
\|\de_{\beta \beta} (\hE-\mathbb{P}_0(\Psi)-\mathbb{P}_1(\Psi)) \|_{H^k}&+\alpha \|R \de_{R\beta} (\hE-\mathbb{P}_0(\Psi)-\mathbb{P}_1(\Psi))\|_{H^k}\notag\\
& + \alpha^2 \|R^2 \de_{RR} (\hE-\mathbb{P}_0(\Psi)-\mathbb{P}_1(\Psi)) \|_{H^k} \lesssim  \|(\Omega - \Omega_2)\|_{{H}^{k}}.
\end{align}
The following estimates hold for the singular part: 
\begin{align}\label{eq:crucial}
{\alpha}\|\de_{\beta \beta} \Psa \|_{H^k}+\alpha \|R \de_{R\beta} \Psa\|_{H^k} + {\alpha^2} \|R^2 \de_{RR} \Psa\|_{H^k}  &\lesssim\|\mathbb{P}_2(\Omega)\|_{{H}^{k}},\\
\|\mathbb{P}_0(\Psi)\|_{H^k}+ \alpha \|R \de_R \mathbb{P}_0(\Psi)\|_{H^k}+\alpha^2 \|R^2 \de_{RR} \mathbb{P}_0(\Psi)\|_{H^k} & \lesssim \|\mathbb{P}_0(\Omega)\|_{H^k}, \label{eq:est-P0} \\
\|\mathbb{P}_1(\Psi)\|_{H^k}+ \alpha \|R \de_R \mathbb{P}_1(\Psi)\|_{H^k}+\alpha^2 \|R^2 \de_{RR} \mathbb{P}_1(\Psi)\|_{H^k} & \lesssim \|\mathbb{P}_1(\Omega)\|_{H^k}.\label{eq:est-P1}
\end{align}
The following estimates hold for the remainder of the singular part:
\begin{align}\label{eq:est-rem}
\|\de_{\beta \beta} \mathcal{R} \|_{H^k}+{\alpha} \|R \de_{R\beta} \mathcal{R}\|_{H^k} + {\alpha^2}\|R^2 \de_{RR} \mathcal{R} \|_{H^k}  \lesssim\|\mathbb{P}_2(\Omega)\|_{{H}^{k}}.
\end{align}
Finally, all the above estimates are still valid if $H^k$ is replaced by $L^2(R^k \de_R^k)$ and $\lesssim$ stands for $\le M$ where $M>0$ is independent of $\alpha$.
\end{Prop}
\begin{proof}
The estimates \eqref{eq:elliptic1-rem} are an adaptation of Lemma \ref{lem:psierr}.  
We now deal with the crucial estimates \eqref{eq:crucial}. From the explicit formula for $\Psa (\Omega)(R, \beta)$ in \eqref{eq:psi2}, exploiting a key cancellation, we obtain that
\begin{align*}
    \de_R \Psa (\Omega)(R, \beta) = -\frac{1}{\pi\alpha^2} \sin (2\beta) R^{-\frac 4 \alpha -1} \int_0^R \int_0^{2\pi} s^\frac{4}{\alpha} \frac{\Omega (s, \beta) \sin (2\beta)}{s} \, d\beta \, ds. 
\end{align*}
This way
\begin{align}
     R\de_{\beta R} \Psa (\Omega)(R, \beta) = -\frac{2}{\pi\alpha^2} \cos (2\beta) R^{-\frac 4 \alpha -1} \int_0^R \int_0^{2\pi} s^\frac{4}{\alpha} \frac{\Omega (s, \beta) \sin (2\beta)}{s} \, d\beta \, ds = - \frac{2}{\alpha} \cos(2\beta) \mathcal{R}^\alpha (\Omega). 
\end{align}
From \eqref{eq:hardy}, we obtain that $\alpha \|R\de_{\beta R} \Psa\|_{L^2} \lesssim  \|\Omega\|_{L^2}.$
Next
\begin{align*}
    \de_{RR} \Psa (\Omega)(R, \beta)&= \frac{1}{\pi \alpha^2} \left(\frac{4}{\alpha}+1\right) \sin (2\beta) R^{-\frac{4}{\alpha}-2} \int_0^R \int_0^{2\pi} s^\frac{4}{\alpha} \frac{\Omega (s, \beta) \sin (2\beta)}{s} \, d\beta \, ds \\
    &\quad - \frac{1}{\pi \alpha^2} \sin (2\beta) R^{-2} \int_0^{2\pi} \Omega (s, \beta) \sin (2\beta) \, d\beta,
\end{align*}
yielding 
\begin{align*}
    R^2\de_{RR} \Psa (\Omega)(R, \beta)&= \left(\frac{4}{\alpha}+1\right) \frac{1}{\alpha} \sin (2\beta) \mathcal{R}^\alpha (\Omega) - \frac{1}{\pi \alpha^2} \sin (2\beta) \int_0^{2\pi} \Omega (s, \beta) \sin (2\beta) \, d\beta,
\end{align*}
from which, using \eqref{eq:hardy} again,
\begin{align*}
    \alpha^2\|\de_{RR}\Psa \|_{L^2} \lesssim \|\Omega\|_{L^2}. 
\end{align*}
Altogether, we obtain \eqref{eq:crucial}. The proof of the estimates for $\mathcal{R}^\alpha$ in \eqref{eq:est-rem} follows the same lines and therefore we omit them. 

Finally, let us look at the equation satisfied by $\Psi_0=\mathbb{P}_0(\Psi),$
\begin{align*}
    4 \Psi_0 + 4\alpha R\de_R \Psi_0 + \alpha^2 R^2 \de_{RR}\Psi_0 = - \mathbb{P}_0(\Omega). 
\end{align*}
Taking the scalar product with $R \de_R \Psi_0$ and integrating by parts gives
\begin{align*}
    \alpha \|R \de_R \Psi_0\|_{L^2} \lesssim \|\mathbb{P}_0(\Omega)\|_{L^2}. 
\end{align*}
Next, multiplying the equation by $\Psi_0$ and $R^2 \de_{RR} \Psi_0$, we obtain the desired estimates. The same procedure applies to $\mathbb{P}_1(\Psi)$. 
The proof is concluded.
\end{proof}

We recall the following embedding property for $\cH^k$ proven in \cite[Lemma 5.1]{tarek2}.
\begin{lem}[{\cite[Lemma 5.1]{tarek2}}]\label{lem:embedding}
    Let $k \in \N$, then there exists $M_k>0$, independent of $\alpha$, such that
    \begin{equation}\label{est:embed1}
    \|R^m\de_R^m\de_\beta^nf\|_{L^{\infty}}\leq M_k \|f\|_{\cH^{m+n+2}}
    \end{equation}
    for all $m+n+2\leq k$ and $f\in \cH^k$.
\end{lem}
We derive elliptic estimates for the stream function $\Psi_r$ defined \eqref{def:psi-r} in the spirit of Proposition \ref{lam:Rest}.
\begin{lem}\label{lem:Psir}
Let $k,m, N\in \N$. With the definitions of $\Psi_r, \Ps(\Omega_r)$ in \eqref{def:psi-r}, it follows that
\begin{equation}\label{eq:est-Psir-sing-infty}
\left\|\de_R^k\de_\beta^{m}\Ps(\Omega_r)\right\|_{L^{\infty}}+\left\|R^k\de_R^k\de_\beta^{m}\Ps(\Omega_r)\right\|_{L^{\infty}}\leq \frac{M_k}{\alpha}\|\mathbb{P}_2(\Omega_r)\|_{\cH^{k+m+1}}
\end{equation}
for all $k,m\in \N$ with $k+m+1\leq N$, where $M_k>0$ is independent of $\alpha$.
Moreover, 
\begin{equation}\label{eq:est-Psir-beta}
\|\Psi_r\|_{\cH^{k}}+\|\de_{\beta\beta}\Psi_r\|_{\cH^k}+\|\de_\beta\Psi_r\|_{\cH^k}+\alpha\|R\de_R\Psi_r\|_{\cH^k}\lesssim \left(C_k+\alpha\exp\left({\frac{C_k}{\alpha}t}\right)+\|\Omega_r\|_{\cH^k}+\frac{\|\mathbb{P}_2(\Omega_r)\|_{\cH^k}}{\alpha}\right),
\end{equation}
where the symbol $\lesssim$ stands for $\le M$, where $M>0$ is independent of $\alpha$, while $C_k$ is defined in \eqref{eq:size-initial}.
\end{lem}
Note that the elliptic estimates of Lemma \ref{lam:Rest} do not allow for an estimate of the type $\|\Psi(\Omega)\|_{\cH^{k}}\leq M_{k}\|\Omega\|_{\cH^{k-1}}$ as the estimates involving $R\de_R$ are not uniform in $\alpha$, even for the remainder terms $\mathcal{R}^{\alpha}(\Omega), \hE(\Omega)$. 
The case $m=0$ is not included in the above $L^{\infty}$-estimate, such an estimate is not required for our analysis.  

\begin{proof}
The desired estimates follow by combining Proposition \ref{lam:Rest} with the embeddings of Lemma \ref{lem:embedding}. Recall from \eqref{def:psi-r} that $\Psi_r=(\hE+\mathcal{R}^\alpha)(\Omega)+\Ps (\Omega_r)$. 
The estimates for $\mathcal{R}(\Omega)$ and $\hE(\Omega)$ follow from \eqref{eq:elliptic1-rem} and \eqref{eq:est-P0}-\eqref{eq:est-rem}, \eqref{est:embed1} and the bound 
\eqref{eq:est-LOM}, namely that
\begin{equation*}
    \|\Ome\|_{\cH^k}\lesssim  \left(C_k+\alpha\exp\left({\frac{C_k}{\alpha}t}\right)\right),
\end{equation*}
where  $C_k=C_k(\|\overline{g}^{\alpha, \delta}_0\|_{\cH^k}, \|\overline{\eta}^{\alpha, \delta}_0\|_{\cH^k})>0$ depends on the initial data as in \eqref{eq:size-initial}.
Concerning \eqref{eq:est-Psir-sing-infty}, it follows from \eqref{eq:crucial},\eqref{eq:est-rem} and \eqref{est:embed1} that 
\begin{equation*}
\left\|\de_R^k\de_\beta^{m}\Ps(\Omega_r)\right\|_{L^{\infty}}+\left\|R^k\de_R^k\de_\beta^{m}\Ps(\Omega_r)\right\|_{L^{\infty}}\leq \|\Ps(\Omega_r)\|_{\cH^{k+m+2}}
\leq \frac{M_k}{\alpha}\|\mathbb{P}_2(\Omega_r)\|_{\cH^{k+m+1}} \leq \frac{M_k}{\alpha}\|\mathbb{P}_2(\Omega_r)\|_{\cH^{N}}
\end{equation*}
for all $k,m\in \N$ with $k+m+1\leq N$. 
\end{proof}

%%%%%

\subsection{System of remainders}\label{sec:system rem}

We derive the system satisfied by $(\Omega_r, \eta_r, \xi_r)$, in \eqref{def:omega-r}, from \eqref{eq:2Dbouss-grad} and 
\eqref{eq:leading1}, namely
\be
\ba
\de_t \Omega_r & + (-\alpha R \de_\beta (\Psi_r+\Psi_{\mathrm{app}})) \de_R (\Omega_r+\Omega_{\mathrm{app}}) + (2\Psi_{\mathrm{app}} \de_\beta \Omega_r + 2 \Psi_r \de_\beta \Omega_r + 2 \Psi_r \de_\beta \Omega_{\mathrm{app}}) \\
& + (\alpha R(\de_R \Psi_r + \de_R \Psi_{\mathrm{app}})) (\de_\beta \Omega_{\mathrm{app}} + \de_\beta \Omega_r)=\eta_r, \\\\
\de_t \eta_r  & + (-\alpha R \de_\beta (\Psi_r+\Psi_{\mathrm{app}})) \de_R (\eta_r+\eta_{\mathrm{app}}) + (2\Psi_{\mathrm{app}} \de_\beta \eta_r + 2 \Psi_r \de_\beta \eta_r + 2 \Psi_r \de_\beta \eta_{\mathrm{app}}) \\
& + (\alpha R(\de_R \Psi_r + \de_R \Psi_{\mathrm{app}})) (\de_\beta \eta_{\mathrm{app}} + \de_\beta \eta_r)\\
&=\de_x u_2 (1-\csi-\xi_r) - \de_x u_1(\et +\eta_r) - \frac{ \cL(\Omega_{\mathrm{app}}) }{2\alpha}\eta_{\mathrm{app}}=: \text{(RHS)}_\eta , \\\\
\de_t \xi_r  & + (-\alpha R \de_\beta (\Psi_r+\Psi_{\mathrm{app}})) \de_R (\csi+\xi_r) + (2\Psi_{\mathrm{app}} \de_\beta \xi_r + 2 \Psi_r \de_\beta \xi_r + 2 \Psi_r \de_\beta\csi \\
& + (\alpha R(\de_R \Psi_r + \de_R \Psi_{\mathrm{app}})) \de_\beta(\csi+ \xi_r)=\de_x u_1 (\csi-1+\xi_r) - (\de_y u_1)(\et+\eta_r)\\
&-\frac{ \cL(\Omega_{\mathrm{app}}) }{2\alpha}(1-\csi)=: \text{(RHS)}_\xi.
\ea
\ee
From now on, the shortened notation $\Ps$ stands for $\Ps (\Ome)$ as in \eqref{eq:odd}, unless differently specified. We write the right-hand sides (RHS)$_\eta$, (RHS)$_\xi$ of the last two equations more explicitly. We shall repeatedly use the trigonometric identities $\sin(2\beta)=2\sin(\beta)\cos(\beta)$ and $\cos(2\beta)=1-2\sin^2(\beta)$. 
To this end, notice from \eqref{eq:spatial-der} and \eqref{eq:u1}
%and \eqref{eq:u2} for $u_1, u_2$ 
that
\begin{equation*}
    \de_x u_1=-\cos(2\beta)\de_\beta\Psi+\frac12\sin(2\beta)\de_{\beta\beta}\Psi-\left(1+\frac{\alpha}{2}\right)\sin(2\beta)\alpha R\de_R\Psi-\frac12\sin(2\beta)(\alpha R)^2\de_{RR}\Psi-\cos(2\beta)\alpha R\de_{\beta R}\Psi
\end{equation*}
so that 
\be\label{def:dexu1}
\ba
-(\de_x u_1)(\eta_{\mathrm{app}}+\eta_r)& - \frac{ \cL(\Omega_{\mathrm{app}}) }{2\alpha}\eta_{\mathrm{app}} =\left(\cos(2\beta)\de_\beta\Ps-\frac12\sin(2\beta)\de_{\beta\beta}\Ps\right)\left(\et+\eta_r\right)- \frac{ \cL(\Omega_{\mathrm{app}}) }{2\alpha}\eta_{\mathrm{app}} \\
&-\Big[-\cos(2\beta)\de_\beta\Psi_r+\frac12\sin(2\beta)\de_{\beta\beta}\Psi_r-\left(1+\frac{\alpha}{2}\right)\sin(2\beta)\alpha R\de_R(\Ps+\Psi_r)\\
&-\frac12\sin(2\beta)(\alpha R)^2\de_{RR}(\Ps+\Psi_r)-\cos(2\beta)\alpha R\de_{\beta R}(\Ps+\Psi_r)\Big]\left(\et+\eta_r\right)
\ea
\ee
% finally
% \be
% \ba
% -(\de_x u_1)(\eta_{\mathrm{app}}+\eta_r) - \frac{ \cL(\Omega_{\mathrm{app}}) }{2\alpha}\eta_{\mathrm{app}} & = [(\alpha R\sin(2\beta) + \frac{\alpha^2 R}{2} \sin(2\beta))(\de_R \Psi_{\mathrm{app}} + \de_R \Psi_r) + {\cos(2\beta) \de_\beta \Psi_r} \\
% &\quad + \alpha R \cos (2\beta) (\de_{R\beta} \Psi_{\mathrm{app}} + \de_{R\beta} \Psi_r) + \frac{\alpha^2 R^2}{2} \sin(2\beta) (\de_{RR}\Psi_{\mathrm{app}}+\de_{RR}\Psi_r)\\
% &\quad {-\frac{1}{2}\sin(2\beta) \de_{\beta \beta} \Psi_r} ](\eta_r+\eta_{\mathrm{app}})\\
% & \quad + (\eta_{\mathrm{app}}}+\eta_r){[\cos(2\beta) \de_\beta \Psi_{\mathrm{app}} - \frac{1}{2}\sin(2\beta) \de_{\beta \beta} \Psi_{\mathrm{app}}] {-\frac{\cL(\Omega_{\mathrm{app}}) }{2\alpha}\eta_{\mathrm{app}}}.
% \ea
% \ee
%
Substituting the expression of $\Ps$ from \eqref{eq:odd} into the terms involving $\Psi_{\mathrm{app}}$ in the first line on the RHS yields
\be
\ba
\left[\cos(2\beta) \de_\beta \Psi_{\mathrm{app}} - \frac 12 \sin(2\beta) \de_{\beta \beta} \Psi_{\mathrm{app}}\right](\eta_{\mathrm{app}}+\eta_r)
&= \left[\cos^2(2\beta)\frac{\cL(\Omega_{\mathrm{app}}) }{2\alpha}+ \sin^2(2\beta)\frac{\cL(\Omega_{\mathrm{app}}) }{2\alpha} \right](\eta_r+\eta_{\mathrm{app}})\\
&= \frac{\cL(\Omega_{\mathrm{app}}) }{2\alpha}(\eta_r+\eta_{\mathrm{app}}).
\ea\notag
\ee
%Similarly, one proceeds with the remaining contributions of $\Psi_2(\Omega)$ in the respective term.
This leads to 
\be
\ba
-(\de_x u_1)(\eta_{\mathrm{app}}+\eta_r)& - \frac{ \cL(\Omega_{\mathrm{app}}) }{2\alpha}\eta_{\mathrm{app}} =\frac{\cL(\Omega_{\mathrm{app}}) }{2\alpha}\eta_r%\color{blue}{+\left(\frac{\cL(\Omega_r)}{2\alpha}+2\mathcal{R}_{\sin}(\Omega)\right)(\et+\eta_r)}
\\
&-\Big[-\cos(2\beta)\de_\beta\Psi_{r}+\frac12\sin(2\beta)\de_{\beta\beta}\Psi_{r}-\left(1+\frac{\alpha}{2}\right)\sin(2\beta)\alpha R\de_R(\Ps+\Psi_r)\\
&-\frac12\sin(2\beta)(\alpha R)^2\de_{RR}(\Ps+\Psi_r)-\cos(2\beta)\alpha R\de_{\beta R}(\Ps+\Psi_r)\Big]\left(\et+\eta_r\right).
\ea\notag
\ee
% \be
% \ba
% -(\de_x u_1)(\eta_{\mathrm{app}}+\eta_r) - \frac{ \cL(\Omega_{\mathrm{app}}) }{2\alpha}\eta_{\mathrm{app}} & = \big[(\alpha R \sin(2\beta) + \frac{\alpha^2 R}{2} \sin(2\beta))(\de_R \Psi_{\mathrm{app}} + \de_R \Psi_r) + \cos(2\beta) \de_\beta \Psi_r \\
% &\quad + \alpha R \cos (2\beta) (\de_{R\beta} \Psi_{\mathrm{app}} + \de_{R\beta} \Psi_r) + \frac{\alpha^2 R^2}{2}\sin(2\beta)(\de_{RR}\Psi_{\mathrm{app}}+\de_{RR}\Psi_r)\\
% &\quad -\frac{1}{2}\sin(2\beta) \de_{\beta \beta} \Psi_r \big](\eta_r+\eta_{\mathrm{app}})+  \frac{\cL(\Omega_{\mathrm{app}}) }{2\alpha} \eta_r.
% \ea\notag
% \ee
%
Similarly, one computes
\be\notag
\ba
\de_x u_2&={2 \Psi}-\sin(2\beta)\de_\beta\Psi+\sin^2(\beta)\de_{\beta\beta}\Psi+\left(2+\cos(2\beta)+\alpha\cos^2\beta\right)\alpha R\de_R\Psi+\cos^2\beta(\alpha R)^2\de_{RR}\Psi\\
&\quad -\sin(2\beta)\alpha R\de_{\beta R}\Psi.
\ea
\ee

% Next, consider
% \be\label{eq:dexu2}
% \ba
% \de_x u_2&= {2 \Psi} + \alpha (2+\alpha) R (\cos^2\beta) \de_R \Psi + \alpha R \de_R \Psi  {-2 \sin \beta \cos \beta \de_\beta \Psi} - 2\alpha R \sin \beta \cos \beta \de_{R\beta}\Psi \\
% &\quad + (\alpha R)^2 \cos^2\beta \de_{RR}\Psi {+ (\sin^2\beta) \de_{\beta \beta}\Psi}. 
% \ea
% \ee
% %
Let us look at the leading order terms. For $\Ps$ given in \eqref{eq:odd} it holds
\begin{equation*}
    {2 \Ps}-\sin(2\beta)\de_\beta\Ps+\sin^2(\beta)\de_{\beta\beta}\Ps=0
\end{equation*}
Plugging it inside the leading order terms above yields 
\be\notag\ba
2 \Psi -\sin(2\beta) \de_\beta \Psi+ \sin^2\beta \de_{\beta \beta}\Psi&= 2 \Psi_r - \sin(2\beta) \de_\beta \Psi_r + (\sin^2\beta) \de_{\beta \beta} \Psi_r.
\ea\ee
Altogether, we have
\be
\ba
\text{(RHS)}_\eta&=\frac{\cL(\Omega_{\mathrm{app}}) }{\alpha} \eta_r + \left[\cos(2\beta) \de_\beta \Psi_r - \frac 1 2 \sin(2 \beta) \de_{\beta \beta} \Psi_r\right](\eta_r+\eta_{\mathrm{app}})\\
& \quad +  [(\alpha R \sin(2\beta)+ \frac{\alpha^2 R}{2} \sin(2\beta))(\de_R \Psi_{\mathrm{app}} + \de_R \Psi_r) \\
&\quad + \alpha R \cos (2\beta) (\de_{R\beta} \Psi_{\mathrm{app}} + \de_{R\beta} \Psi_r) + \frac{\alpha^2 R^2}{2} \sin(2\beta)(\de_{RR}\Psi_{\mathrm{app}}+{\de_{RR}\Psi_r})](\eta_r+\eta_{\mathrm{app}})\\
&\quad + \left[2 \Psi_r - \sin(2\beta) \de_\beta \Psi_r + (\sin^2\beta) \de_{\beta \beta} \Psi_r\right](1-\csi-\xi_r). 
\ea
\ee
%%%%%
It remains to rewrite $\text{(RHS)}_\xi$. We have that
\be
\ba
\de_x u_1 (\xi-1)&=  -\frac{\cL(\Omega_{\mathrm{app}}) }{2\alpha}(\xi-1)\\
&\quad  -  [(\alpha R \sin(2\beta)+ \frac{\alpha^2 R}{2} \sin(2\beta))(\de_R \Psi_{\mathrm{app}} + \de_R \Psi_r)  \\
&\quad + \alpha R \cos (2\beta) (\de_{R\beta} \Psi_{\mathrm{app}} + \de_{R\beta} \Psi_r) + \frac{\alpha^2 R^2}{2} \sin(2\beta)(\de_{RR}\Psi_{\mathrm{app}}+\de_{RR}\Psi_r)\\
&\quad + \cos(2\beta) \de_\beta \Psi_r -\frac{1}{2}\sin(2\beta) \de_{\beta \beta} \Psi_r ](\xi-1).
\ea 
\ee
Finally, we need the expression of $\de_y u_1$. 
\begin{comment}
\textcolor{blue}{In blue come alternativa per l'equazione $\de_yu_1=\de_xu_2\pm \Delta \psi$:
 A straightforward computations yields
 \begin{equation*}
     \de_y u_1=-(2\Psi+\sin(2\beta)\de_{\beta}\Psi+\cos^2(\beta)\de_{\beta\beta}\Psi)-\alpha R(1+2\sin^2(\beta))\de_R\Psi-\alpha^2\sin^2(\beta)R\de_R\Psi-\alpha R\sin(2\beta)\de_{R\beta}\Psi-(\alpha R)^2\de_{RR}\Psi.
 \end{equation*}
 }
 \end{comment}
 %
In polar coordinates $(R, \beta)$, from the elliptic equation for $\Psi$ in \eqref{eq:psi-main}, we have
\be
\ba
\de_y u_1 & = {-2 \Psi} + \alpha (2+\alpha) R (\cos^2\beta) \de_R \Psi - \alpha (\alpha + 3) R \de_R \Psi {- \sin(2\beta) \de_\beta \Psi} - \alpha R  \sin(2\beta)  \de_{R \beta} \Psi \\
&\quad - (\alpha R)^2 (\sin^2\beta) \de_{RR} \Psi {- (\cos^2\beta) \de_{\beta \beta} \Psi}.
\ea
\ee
Using the decomposition $\Psi=\Psi_2+\Psi_r$, we have
\be
\ba
\text{(RHS)}_\xi&=  -\frac{\cL(\Omega_{\mathrm{app}}) }{2\alpha}\xi_r - \left[(1-2\sin^2\beta) \de_\beta \Psi_r-\frac 12 \sin (2\beta) \de_{\beta \beta} \Psi_r\right] (\xi_r+\csi-1)\\
&\quad -  [(\alpha R \sin(2\beta)+ \frac{\alpha^2 R}{2} \sin(2\beta) )(\de_R \Psi_{\mathrm{app}} + \de_R \Psi_r)  \\
&\quad + \alpha R \cos (2\beta) (\de_{R\beta} \Psi_{\mathrm{app}} + \de_{R\beta} \Psi_r) + \frac{\alpha^2 R^2}{2} \sin(2\beta) (\de_{RR}\Psi_{\mathrm{app}}+\de_{RR}\Psi_r) ](\xi_r+\csi-1)\\
&\quad + [2 \Psi_r + \sin(2\beta) \de_\beta \Psi_r + (\cos^2\beta) \de_{\beta \beta} \Psi_r](\eta_{\mathrm{app}}+\eta_r).
\ea
\ee
We can finally write down the system for the remainder terms as follows:
\be
\ba
\de_t \Omega_r & + (-\alpha R \de_\beta (\Psi_r+\Psi_{\mathrm{app}})) \de_R (\Omega_r+\Omega_{\mathrm{app}}) + (2\Psi_{\mathrm{app}} \de_\beta \Omega_r + 2 \Psi_r \de_\beta \Omega_r + 2 \Psi_r \de_\beta \Omega_{\mathrm{app}}) \\
& + (\alpha R(\de_R \Psi_r + \de_R \Psi_{\mathrm{app}})) (\de_\beta \Omega_{\mathrm{app}} + \de_\beta \Omega_r)=\eta_r; \\\\
\de_t \eta_r  & + (-\alpha R \de_\beta (\Psi_r+\Psi_{\mathrm{app}})) \de_R (\eta_r+\eta_{\mathrm{app}}) + (2\Psi_{\mathrm{app}} \de_\beta \eta_r + 2 \Psi_r \de_\beta \eta_r + 2 \Psi_r \de_\beta \eta_{\mathrm{app}}) \\
& + (\alpha R(\de_R \Psi_r + \de_R \Psi_{\mathrm{app}})) (\de_\beta \eta_{\mathrm{app}} + \de_\beta \eta_r)\\&= \text{(RHS)}_\eta =\frac{\cL(\Omega_{\mathrm{app}}) }{2\alpha} \eta_r + \left[\cos(2\beta) \de_\beta \Psi_r - \frac 1 2 \sin(2 \beta) \de_{\beta \beta} \Psi_r\right](\eta_r+\eta_{\mathrm{app}})\\
& \quad +  [(\alpha R \sin(2\beta)+ \frac{\alpha^2 R}{2} \sin(2\beta) )(\de_R \Psi_{\mathrm{app}} + \de_R \Psi_r) \\
&\quad + \alpha R \cos (2\beta) (\de_{R\beta} \Psi_{\mathrm{app}} + \de_{R\beta} \Psi_r) + \frac{\alpha^2 R^2}{2} \sin(2\beta) (\de_{RR}\Psi_{\mathrm{app}}+\de_{RR}\Psi_r)](\eta_r+\eta_{\mathrm{app}})\\
&\quad + [2 \Psi_r - \sin(2\beta) \de_\beta \Psi_r + (\sin^2\beta) \de_{\beta \beta} \Psi_r](1-\xi_r-\csi); \\\\
\de_t \xi_r  & + (-\alpha R \de_\beta (\Psi_r+\Psi_{\mathrm{app}})) \de_R (\xi_r+\csi)  + (2\Psi_{\mathrm{app}} \de_\beta \xi_r + 2 \Psi_r \de_\beta \xi_r+2 \Psi_r \de_\beta \csi) \\
& + (\alpha R(\de_R \Psi_r + \de_R \Psi_{\mathrm{app}})) \de_\beta (\xi_r+\csi)\\&=\text{(RHS)}_\xi=-\frac{\cL(\Omega_{\mathrm{app}}) }{2\alpha}\xi_r- \left[\cos (2\beta) \de_\beta \Psi_r-\frac 12 \sin (2\beta) \de_{\beta \beta} \Psi_r \right](\xi_r+\csi-1)\\
&\quad -  [(\alpha R \sin(2\beta)+ \frac{\alpha^2 R}{2} \sin(2\beta) )(\de_R \Psi_{\mathrm{app}} + \de_R \Psi_r)  \\
&\quad + \alpha R \cos (2\beta) (\de_{R\beta} \Psi_{\mathrm{app}} + \de_{R\beta} \Psi_r) + \frac{\alpha^2 R^2}{2} \sin(2\beta) (\de_{RR}\Psi_{\mathrm{app}}+\de_{RR}\Psi_r) ](\xi_r+\csi-1)\\
&\quad + [2 \Psi_r + \sin(2\beta) \de_\beta \Psi_r + (\cos^2\beta) \de_{\beta \beta} \Psi_r](\eta_{\mathrm{app}}+\eta_r).
\ea
\ee
In order to lighten the notation, we introduce the transport operator $\cT$ acting on a function $f(R, \beta)=f=f_r+\fe$ as follows 
\be\label{eq:transport terms}
\ba
\cT f(R, \beta) := (-\alpha R \de_\beta (\Psi_r+\Psi_{\mathrm{app}})) \de_R (f_r+\fe) &+ (2\Psi_{\mathrm{app}} \de_\beta f_r + 2 \Psi_r \de_\beta f_r + 2 \Psi_r \de_\beta \fe) \\
& + (\alpha R(\de_R \Psi_r + \de_R \Psi_{\mathrm{app}})) (\de_\beta \fe + \de_\beta f_r).
\ea
\ee
 For further reference, we rely on the notation for the remainder estimates in \cite[Section 6]{tarek2}. The system for the remainder terms can then be written in the compact form below
\begin{align}
\de_t \Omega_r + \cT \Omega &=\eta_r;  \notag \\\notag\\
\de_t \eta_r   + \cT \eta&=\frac{\cL(\Omega_{\mathrm{app}}) }{2\alpha} \eta_r +\left[ \cos(2\beta) \de_\beta \Psi_r - \frac 1 2 \sin(2 \beta) \de_{\beta \beta} \Psi_r\right](\eta_r+\eta_{\mathrm{app}}) \; [=:\text{(RHS)}_\eta^1]\label{eq:rhseta1}\\
& \quad + [\mathrm{I}_4 + \mathrm{I}_6 + \mathrm{I}_7] (\eta_r+\eta_{\mathrm{app}}) \; [=:\text{(RHS)}_\eta^2]\nonumber \\
&\quad + [2 \Psi_r - \sin(2\beta) \de_\beta \Psi_r + (\sin^2\beta) \de_{\beta \beta} \Psi_r](1-\csi-\xi_r);  \; [=:\text{(RHS)}_\eta^3]  \nonumber\\\notag\\
\de_t \xi_r   + \cT \xi &=-\frac{\cL(\Omega_{\mathrm{app}}) }{2\alpha}\xi_r-  \left[\cos (2\beta) \de_\beta \Psi_r-\frac 12 \sin (2\beta) \de_{\beta \beta} \Psi_r \right] (\xi_r+\xi_\mathrm{app}-1)\; [=:\text{(RHS)}_\xi^1]\label{eq:rhsxi1}\\
&\quad -  [\mathrm{I}_4 + \mathrm{I}_6 + \mathrm{I}_7](\xi_r+\xi_\mathrm{app}-1)\; [=:\text{(RHS)}_\xi^2]\nonumber\\
&\quad + [2 \Psi_r + \sin(2\beta) \de_\beta \Psi_r + (\cos^2\beta) \de_{\beta \beta} \Psi_r](\eta_{\mathrm{app}}+\eta_r), \; [=:\text{(RHS)}_\xi^3]\label{eq:rhs-xir1}\nonumber,
\end{align}
%%%%%
where 
\be
\ba
\mathrm{I}_4&:= (\alpha R \sin(2\beta)+\frac{\alpha^2 R}{2} \sin(2\beta))(\de_R \Psi_{\mathrm{app}} + \de_R \Psi_r)=:\mathrm{I}_{4,1}+\mathrm{I}_{4,2}, \\
\mathrm{I}_6&:=\alpha R \cos (2\beta) (\de_{R\beta} \Psi_{\mathrm{app}} + \de_{R\beta} \Psi_r)=: \mathrm{I}_{6,1}+\mathrm{I}_{6,2}, \\
 \mathrm{I}_7&:=\frac{\alpha^2 R^2}{2}\sin(2\beta) (\de_{RR}\Psi_{\mathrm{app}}+\de_{RR}\Psi_r)=: \mathrm{I}_{7,1}+\mathrm{I}_{7,2}.
\ea
\ee

\subsection{Estimates for remainder terms}\label{sec:est rem}
% \textcolor{blue}{Comment: There are two main kinds of remainder terms, namely \textbf{transport terms} $\cT$ and \textbf{source terms} $\cS$. The transport terms are the following:
% \be 
% \cT= \mathrm{I}_1+\mathrm{I}_2+\mathrm{I}_3,
% \ee
% where $\mathrm{I}_1, \mathrm{I}_2, \mathrm{I}_3$ are exactly the ones in [Section 6, \cite{tarek2}].
% }
% %

In this section, it is shown that the remainders $(\Omega_r,\eta_r,\xi_r)$ are sufficiently small in $\cH^k$ on suitable time scales. To that end, we adapt and extend the strategy in to prove the remainder estimates in \cite[Section 6]{tarek2}, to the present setting. Note that here we are concerned with a system while a scalar equation is considered in \cite{tarek2}. Furthermore, additional estimates for $\Psi_r$ are required in view of the decomposition \eqref{def:psi-r} differing from the one in \cite{tarek2} and the structure of (RHS)$_\eta$, (RHS)$_\xi$.

The remainder estimates are proven in the Sobolev spaces $\cH^k$ and $\cW^{k,\infty}$ with weights tailored for that purpose as defined in \eqref{def:Hk} and \eqref{def:Wk}.

In addition to the elliptic estimates of Proposition \ref{lam:Rest} and Lemma \ref{lem:Psir}, we repeatedly rely on the bounds for solutions to the leading order model.
More precisely, \eqref{eq:est-LOM} and \eqref{eq:est-psi2} respectively yield for $f_{\mathrm{app}}=\et, \csi$ that 
\begin{equation}\label{eq:est-fapp}
     \|\Ome\|_{\cH^k}\leq C_k+ \alpha\exp\left({\frac{C_k}{\alpha}t}\right), \; \|\fe\|_{\cH^k}\leq C_k \exp\left({\frac{C_k}{\alpha}t}\right), \; \|\Ps (\Ome)\|_{\cW^{k,\infty}}+\|\Ps (\Ome)\|_{\cH^{k}} \le \frac{C_{k}}{\alpha},
\end{equation}
where $k\in \N$. We further observe that, in view of \eqref{eq:odd} and the elliptic estimates of Proposition \ref{lam:Rest},  one has 
\begin{equation}\label{est:Psiapp ell}
     \|\alpha R\de_R\de_\beta\Ps(\Ome)\|_{\cH^N}+\|\alpha R\de_R\Ps(\Ome)\|_{\cH^N}\le M \|\Ome\|_{\cH^N}\leq M\left(C_{N}+\alpha\exp\left({\frac{C_N}{\alpha}t}\right)\right).
\end{equation}

Next, we provide energy estimates for $(\Omega_r,\eta_r,\xi_r)$ as in \eqref{def:omega-r}.
To that end, we consider the various contributions separately and start by deriving the estimates for the transport terms $\mathcal{T}$ defined in \eqref{eq:transport terms}. 
\begin{lem}\label{lem:remT}
Let $N\geq3$. Then, for $f=\Omega,\eta,\xi$ with the usual decomposition $f=\fe+f_r$ it holds
\begin{align*}
     \left|\left\langle \mathcal{T}(f), f_r \right\rangle_{\cH^N}\right|
     &\leq C_{N+1}\exp\left({\frac{C_{N+1}}{\alpha}t}\right)\left(C_{N+1}+\alpha\exp\left({\frac{C_{N}}{\alpha}t}\right)+\frac{\|\mathbb{P}_2\Omega_r\|_{\cH^N}}{\alpha}\right)\|f_r\|_{\cH^N}\\
     &\quad +\left(\frac{C_{N+1}}{\alpha}+\alpha\exp\left({\frac{C_{N}}{\alpha}t}\right)+\frac{\|\mathbb{P}_2\Omega_r\|_{\cH^N}}{\alpha}\right)\|f_r\|_{\cH^N}^2.
\end{align*}
% \begin{align*}
%    \left|\left\langle \mathcal{T}(f), f_r \right\rangle_{\cH^N}\right|
%     & \leq C_{N+1}\eul^{\frac{C_{N+1}}{\alpha}t}\left(\eul^{\frac{C_{N+1}}{\alpha}t}+\frac{\|\Omega_r\|_{\cH^N}}{\alpha}\right)\|f_r\|_{\cH^N}+C_{N+1}\left(\frac{1}{\alpha}\eul^{\frac{C_{N+1}}{\alpha}t}+\frac{\|\Omega_r\|_{\cH^N}}{\alpha}\right)\|f_r\|_{\cH^N}^2, \\
%      \left|\left\langle \mathcal{T}_\xi(\xi), \xi \right\rangle_{\cH^N}\right|
%      & \leq C_{N+1}\eul^{\frac{C_{N+1}}{\alpha}t}\left(\eul^{\frac{C_{N+1}}{\alpha}t}+\frac{\|\Omega_r\|_{\cH^N}}{\alpha}\right)\|\xi\|_{\cH^N}+C_{N+1}\left(\frac{1}{\alpha}\eul^{\frac{C_{N+1}}{\alpha}t}+\frac{\|\Omega_r\|_{\cH^N}}{\alpha}\right)\|\xi\|_{\cH^N}^2.
% \end{align*}
\end{lem}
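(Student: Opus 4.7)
The strategy is to decompose $\mathcal{T}f$ according to which variable ($f_r$ or $f_{\text{app}}$) is being differentiated. The three contributions in which $f_r$ is differentiated add up to the full advection of $f_r$ by the total stream function $\Psi := \Psi_r + \Psi_{\text{app}}$,
\begin{equation*}
\mathcal{A}f_r := [-\alpha R\partial_\beta \Psi]\,\partial_R f_r + [2\Psi + \alpha R\partial_R \Psi]\,\partial_\beta f_r,
\end{equation*}
while the remaining three terms form a source piece
\begin{equation*}
\mathcal{B}f_{\text{app}} := [-\alpha R\partial_\beta \Psi]\,\partial_R f_{\text{app}} + [2\Psi_r + \alpha R\partial_R \Psi]\,\partial_\beta f_{\text{app}}.
\end{equation*}
The crucial observation is that $\mathcal{B}$ does \emph{not} contain $2\Psi_{\text{app}}\partial_\beta f_{\text{app}}$, which was absorbed by \eqref{eq:leading1}; this is what makes $\mathcal{B}f_{\text{app}}$ a genuine source of size controlled by the size of $\Psi_r$ rather than a top-order advection.

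For the transport piece $\langle \mathcal{A}f_r, f_r\rangle_{\cH^N}$, I would, for each derivative operator $\partial^\gamma \in \{\partial_R^i\partial_\beta^j, R^i\partial_R^i\partial_\beta^j\}_{i+j\leq N}$ defining the $\cH^N$-norm, split $\partial^\gamma(\mathcal{A}f_r) = \mathcal{A}(\partial^\gamma f_r) + [\partial^\gamma, \mathcal{A}]f_r$. A direct integration by parts in $(R,\beta)$ using periodicity in $\beta$ and the vanishing of the coefficient $\alpha R$ at $R=0$ yields
\begin{equation*}
\langle \mathcal{A}g, g\rangle_{L^2} = -\Big(1-\tfrac{\alpha}{2}\Big)\int \partial_\beta \Psi \, g^2 \, dRd\beta,
\end{equation*}
so the diagonal contribution is controlled by $\|\partial_\beta \Psi\|_{L^\infty}\|\partial^\gamma f_r\|_{L^2}^2$. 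The commutator is handled by Leibniz, distributing derivatives of $\Psi$ in $L^\infty$ (for intermediate orders, via Lemma \ref{lem:Psir} combined with Lemma \ref{lem:embedding}) or in $L^2$ (at top order, via the $\cH^N$-bound on $\Psi$), pairing with the complementary factor on $f_r$ in $L^2$ or $L^\infty$ (the latter via the embedding $\cH^3 \hookrightarrow \cW^{1,\infty}$ of Lemma \ref{lem:embedding}, using $N\geq 3$). The weighted $R^i\partial_R^i$ terms are treated identically, exploiting the derivation identity $R\partial_R(fg)=(R\partial_R f)g + f(R\partial_R g)$, and noting that the factor $\alpha R$ already present in the coefficients combines with $\partial_R$ to form $\alpha(R\partial_R)$, matching the weighted bounds \eqref{eq:est-Psir-R}. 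Summing and using $\|\Psi\|_{\cH^N} + \|\Psi\|_{\cW^{N-1,\infty}} \lesssim C_{N+1}/\alpha + \alpha\eul^{C_Nt/\alpha} + \|\Omega_r\|_{\cH^N}/\alpha$, which follows from Lemma \ref{lem:Psir} together with \eqref{eq:est-psi2} and \eqref{est:Psiapp ell}, yields the quadratic contribution of the target bound.

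For the source piece, Cauchy--Schwarz gives $|\langle \mathcal{B}f_{\text{app}}, f_r\rangle_{\cH^N}|\leq \|\mathcal{B}f_{\text{app}}\|_{\cH^N}\|f_r\|_{\cH^N}$. Leibniz-expanding $\partial^\gamma(\Psi\cdot\partial f_{\text{app}})$ and placing one factor in $L^\infty$ via Lemma \ref{lem:embedding}, the $f_{\text{app}}$-factor is bounded by $\|f_{\text{app}}\|_{\cH^{N+1}}\lesssim C_{N+1}\eul^{C_{N+1}t/\alpha}$ via \eqref{eq:est-fapp}, and the $\Psi$-factor by the bounds of the previous paragraph, producing the linear contribution $C_{N+1}\eul^{C_{N+1}t/\alpha}\,(C_{N+1}+\alpha\eul^{C_Nt/\alpha}+\|\Omega_r\|_{\cH^N}/\alpha)\,\|f_r\|_{\cH^N}$.

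The principal technical difficulty is to retain $\|\Omega_r\|_{\cH^N}$ (rather than $\|\Omega_r\|_{\cH^{N+1}}$) in the final estimate: since Lemma \ref{lem:Psir} provides only $\cW^{N-1,\infty}$-control of $\Psi_r$ in terms of $\|\Omega_r\|_{\cH^N}$, the top-order derivatives of $\Psi_r$ appearing in the commutator cannot be placed in $L^\infty$ and must instead be absorbed in $\|\Psi_r\|_{\cH^N}$ via an $L^2$-pairing, compensated by an $L^\infty$-bound on lower-order derivatives of $f_r$ through Lemma \ref{lem:embedding}, which is precisely the role of the hypothesis $N\geq 3$.
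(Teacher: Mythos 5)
Your proposal is correct and organizes the estimate more cleanly than the paper's proof, though it arrives at the same bound. The paper splits $\cT f$ by its three coefficient blocks ($I_1 = -\alpha R\partial_\beta\Psi\,\partial_R(\cdot)$, $I_2 = 2\Psi\,\partial_\beta(\cdot)$, $I_3 = \alpha R\partial_R\Psi\,\partial_\beta(\cdot)$), then further splits each according to whether the stream-function factor is $\Psi_r$ or $\Ps$ and whether $f_r$ or $\fe$ is differentiated, estimating each of the resulting $\sim$ten pieces separately by Leibniz and (for the $i=0$ cases) integration by parts. Your regrouping into advection of $f_r$ by the \emph{full} $\Psi$ plus a source $\mathcal{B}\fe$ makes explicit a structural fact the paper only uses piecemeal, namely that the whole diagonal piece collapses after a single integration by parts to the zeroth-order identity $\langle\mathcal{A}g,g\rangle_{L^2}=-(1-\tfrac{\alpha}{2})\int\partial_\beta\Psi\,g^2$, with no derivative loss; I checked the identity and it is correct. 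This does not, however, save a power of $\alpha$: since $\partial_\beta\Ps\sim\cL/\alpha$, the diagonal contribution is still $(C_{N+1}/\alpha)\|f_r\|_{\cH^N}^2$, exactly what the paper extracts from $I_2$ directly. Two small caveats. First, for the weighted $R^i\partial_R^i\partial_\beta^{m-i}$ pieces of the $\cH^N$-pairing, $[\mathcal{A},R^i\partial_R^i]$ does not reduce cleanly to the scalar identity and in practice one falls back on the paper's term-by-term treatment — the paper also glosses over this, deferring to \cite[Section 6]{tarek2}. Second, your phrasing that $\mathcal{B}\fe$ is "controlled by the size of $\Psi_r$" slightly overstates the structure: $\mathcal{B}$ also contains $-\alpha R\partial_\beta\Ps\,\partial_R\fe$ and $\alpha R\partial_R\Ps\,\partial_\beta\fe$ with the full $\Ps$, and these produce the $C_{N+1}^2\eul^{C_{N+1}t/\alpha}\|f_r\|_{\cH^N}$ term (it is the absence of the precise term $2\Ps(\Ome)\partial_\beta\fe$, not the absence of $\Ps$ altogether, that makes $\mathcal{B}$ a source); but your estimates account for this correctly. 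Your identification of why $\|\Omega_r\|_{\cH^N}$ (rather than $\cH^{N+1}$) can be retained, and the role of $N\geq 3$ via Lemma \ref{lem:embedding}, exactly matches the paper.
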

\begin{Rmk}\label{rmk:largest}
We emphasize that the term $\frac{C_{N+1}}{\alpha}\exp\left({\frac{C_{N+1}}{\alpha}t}\right)\|\Omega_r\|_{\cH^N}\|f_r\|_{\cH^N}$ leads to the largest contribution in the final estimate of the remainder terms \eqref{eq:boot} and determines the choice of the $\log \log$ time scale in Proposition \ref{prop:rem}. This contribution stems from $2\Psi_r\de_\beta \fe$ in the transport terms \eqref{eq:transport terms}.
\end{Rmk}
\begin{proof}
Recalling \eqref{eq:transport terms}, we set 
\begin{equation*}
\ba
\cT f(R, \beta) &:= (-\alpha R \de_\beta (\Psi_r+\Ps)) \de_R (f_r+\fe)+ (2\Ps \de_\beta f_r + 2 \Psi_r \de_\beta f_r + 2 \Psi_r \de_\beta \fe) \\
&\quad + (\alpha R(\de_R \Psi_r + \de_R \Ps)) (\de_\beta \fe + \de_\beta f_r)\\&=:I_1+I_2+I_3.
\ea
\end{equation*}

In order to derive $\cH^N$-bounds, we first derive $H^N$-bounds and then consider the contribution of the weighted norms. 
We further split $I_1$ as 
\begin{equation*}
    I_1=-\alpha \de_\beta \Ps R\de_R \fe-\alpha \de_\beta \Ps R\de_R f_{r}-\alpha \de_\beta \Psi_r R\de_R \fe-\alpha \de_\beta \Psi_r R\de_R f_{r}=:I_{1,1}+I_{1,2}+I_{1,3}+I_{1,4}
\end{equation*}
and treat these terms separately. For the first contribution, one has 
\begin{equation*}
     \left|-\alpha \left\langle \de_\beta\Ps R\de_R \fe, f_r\right\rangle_{H^N}\right|
\leq C\alpha\|\de_\beta\Ps\|_{W^{N,\infty}}\|R\de_R\fe\|_{\cH^{N}}\|f_r\|_{\cH^{N}}\leq C_{N+1}^2\exp\left({\frac{C_{N+1}}{\alpha}t}\right)\|f_r\|_{\cH^N},
\end{equation*}
where we used \eqref{eq:est-fapp} to bound $\fe$ and $\Ps$. In the following, we only consider the contribution to the $H^N$ scalar-product with derivatives of order $N$ as the bounds for terms with lower order derivatives follow along the same lines. For $I_{1,2}$, we have    
\begin{equation*}
    \left\langle -\alpha\de^N\left( R\de_\beta \Ps\de_R f_{r}\right), \de^N f_r\right\rangle_{L^2}=\left\langle -\alpha \sum_{i=0}^N\de^i\de_\beta \Ps \de^{N-i}(R\de_R f_{r}), \de^N f_r\right\rangle_{L^2}.
\end{equation*}
If $i=0$, an integration by parts in $R$ yields
\begin{align*}
    \left|-\alpha\int\int\de_\beta\Ps \frac{R}{2}\de_R(\de^Nf_r)^2\right| & =\left|\alpha \int\int(\de_\beta\Ps+R\de_R\de_\beta \Ps)\frac{(\de^Nf_r)^2}{2}\right|
    \leq C_{2}\|f_r\|_{\cH^N}^2,
\end{align*}
upon using \eqref{eq:est-fapp}. For $1\leq i\leq N$, it follows
\begin{equation*}
   \left|\left\langle -\alpha \sum_{i=1}^N\de^i\de_\beta \Ps \de^{N-i}(R\de_R f_{r}), \de^N f_r\right\rangle_{L^2}\right|\lesssim \alpha \|\de_\beta\Ps\|_{\cW^{N,\infty}}\|f_r\|_{\cH^N}^2\leq C_{N+1}\|f_r\|_{\cH^N}^2.
\end{equation*}
Hence
\begin{equation*}
    \left|\left\langle I_{1,2}, f_r\right\rangle_{H^N}\right| \leq C_{N+1}\|f_r\|_{\cH^N}^2. 
\end{equation*}
For $I_{1,3}$, the contribution involving derivatives of order $N$ amounts to
\begin{equation*}
     \left|\left\langle\de^N\left(-\alpha \de_\beta \Psi_r R\de_R \fe\right),\de^N f_r\right\rangle_{L^2}\right|\leq \left\|\sum_{i=0}^N\de^i\left(\alpha\de_\beta\Psi_r\right)\de^{N-i}(R\de_R\fe)\right\|_{L^2}\|f_r\|_{H^N}.
\end{equation*}
If $0\leq i\leq \frac{N}{2}$, then \eqref{eq:est-Psir-sing-infty} yields
\begin{equation*}
    \left\|\de^i\left(\alpha\de_\beta\Psi_r\right)\de^{N-i}(R\de_R\fe)\right\|_{L^2}\leq \|\alpha\de^i\de_\beta\Psi_r\|_{L^{\infty}}\|\fe\|_{\cH^{N+1}}\leq C \left(\alpha\|\Ome\|_{\cH^N}+\|\Omega_{r}\|_{\cH^N}\right)\|\fe\|_{H^{N+1}}.
\end{equation*}
Similarly for $\frac{N}{2}< i \leq N$, \ref{lem:embedding} that
\begin{align*}
    \left\|\de^i\left(\alpha\de_\beta\Psi_r\right)\de^{N-i}(R\de_R\fe)\right\|_{L^2}
    &\leq \|\de^i\left(\alpha\de_\beta\Psi_r\right)\|_{L^2}\|\de^{N-i}(R\de_R\fe)\|_{L^{\infty}}\\&\leq C \left(\alpha\|\Ome\|_{\cH^N}+\|\Omega_{r}\|_{\cH^N}\right) \|\fe\|_{\cH^{N+1}}.
\end{align*}
Combining these estimates, we conclude that 
\begin{align*}
    \left|\left\langle I_{1,3},f_r\right\rangle_{H^N}\right|\leq C_{N+1}\exp\left({\frac{C_{N+1}}{\alpha}t}\right)\left(C_N\alpha+\alpha^2\exp\left({\frac{C_{N}}{\alpha}t}\right)+\|\Omega_r\|_{\cH^N}\right)\|f_r\|_{\cH^N}.
\end{align*}
%\red{*** Above: it is maybe $C_{N+1} \eul^{\frac{C_{N}}{\alpha}t}$} \textcolor{blue}{No, it is not due to the Sobolev embedding.}\\
The highest order term of $I_{1,4}$ is controlled in the same spirit of $I_{1,3}$. More precisely, we consider separately the cases $i=0$, $1\leq i\leq \frac{N}{2}$ and $\frac{N}{2}< i\leq N$, leading to
\begin{multline*}
\left\langle-\alpha \de^N\left(\de_\beta \Psi_r R\de_R f_{r}\right), \de^N f_r \right\rangle_{L^2}
=-\left\langle\alpha\left(\de_\beta \Psi_r \de^N(R\de_R f_{r})\right), \de^N f_r \right\rangle_{L^2}\\-\left\langle\sum_{i=1}^{\lfloor \frac{N}{2}\rfloor}\de^i\left(\alpha\de_\beta \Psi_r \right)\de^{N-i}(R\de_R f_{r}), \de^N f_r \right\rangle_{L^2}-\left\langle\sum_{i=\lceil\frac{N}{2}\rceil}^{N}\de^i\left(\alpha\de_\beta \Psi_r \right)\de^{N-i}(R\de_R f_{r}), \de^N f_r \right\rangle_{L^2}.
\end{multline*}
Arguing for each contribution similarly to the respective cases for $I_{1,3}$, namely integrating by parts in $R$ for $i=0$ and using the $L^{\infty}$-estimate for $\de_\beta\Psi_r$ for $i\leq \frac{N}{2}$ and respectively the $L^2$-estimate for $i>\frac{N}{2}$ provided by Lemma \ref{lem:Psir}, one has
\begin{equation*}
    \left|\left\langle-\alpha \de^N\left(\de_\beta \Psi_r R\de_R f_{r}\right), \de^N f_r \right\rangle_{L^2}\right|\leq \left(\alpha C_N+\alpha^2\exp\left({\frac{C_N}{\alpha}t}\right)+\|\Omega_r\|_{\cH^N}\right)\|f_r\|_{\cH^N}^2
\end{equation*}
and hence the bound for $I_{1,4}$ follows along the same lines. For the contributions involving weighted norms, the strategy to prove the remainder estimates in \cite[Section 6]{tarek2} can be adapted in the same spirit as for the estimates detailed above.
Finally,
\begin{align*}
    \left|\left\langle I_1, f_r\right\rangle_{\cH^N}\right|&\leq C_{N+1}\exp\left({\frac{C_{N+1}}{\alpha}t}\right)\left(C_{N+1}+\alpha C_N\exp\left({\frac{C_N}{\alpha}t}\right)+\|\Omega_r\|_{\cH^N}\right)\|f_r\|_{\cH^N}\\&\quad+\left(C_{N+1}+\alpha C_N\exp\left({\frac{C_N}{\alpha}t}\right)+\|\Omega_r\|_{\cH^N}\right)\|f_r\|_{\cH^N}^2.
\end{align*}
Next, we consider
\begin{equation*}
    I_2=2\Ps \de_\beta f_r + 2 \Psi_r \de_\beta f_r + 2 \Psi_r \de_\beta \fe.
\end{equation*}
The explicit estimates for $\fe$ and $\Ps$, see \eqref{eq:est-fapp} and \eqref{est:Psiapp ell} allow one to bound
\begin{equation*}
    \left|\left\langle 2\Ps \de_\beta f_r,f_r \right\rangle_{\cH^N
    }\right|\leq \frac{C_{N+1}}{\alpha}\|f_r\|_{\cH^N}^2
\end{equation*}
by arguing as in the proof of $I_{1,2}$, namely considering the cases $i=0$, $i\leq \frac{N}{2}$ and $\frac{N}{2}<i\leq N$ and using the respective estimates from Lemma \ref{lem:Psir}. In the same spirit, we obtain
\begin{equation*}
     \left|\left\langle2 \Psi_r \de_\beta f_r,f_r\right\rangle_{\cH^N
    }\right|\leq C \|\de_\beta\Psi_r\|_{\cH^{N}}\|f_r\|_{\cH^N}^2\leq \left(C_N+\alpha\exp\left({\frac{C_N}{\alpha}t}\right)+\frac{\|\Omega_r\|_{\cH^N}}{\alpha}\right)\|f_r\|_{\cH^N}^2.
\end{equation*}
The last term of $I_2$ is bounded by
\begin{align*}
    \left|\left\langle2 \Psi_r \de_\beta \fe,f_r\right\rangle_{\cH^N}\right|&\leq C_{N+1}\exp\left({\frac{C_{N+1}}{\alpha}t}\right)\|\Psi_r\|_{
    \cH^N}\|f_r\|_{\cH^N}\\
    &\leq C_{N+1}\exp\left(^{\frac{C_{N+1}}{\alpha}t}\right)\left(C_N+\alpha\exp\left({\frac{C_{N}}{\alpha}t}\right)+\frac{\|\Omega_r\|_{\cH^N}}{\alpha}\right)\|f_r\|_{\cH^N}.
\end{align*}
Therefore, we conclude
\begin{align*}
    \left|\left\langle I_2, f_r\right\rangle_{\cH^N}\right|&\leq C_{N+1}\exp\left({\frac{C_{N+1}}{\alpha}t}\right)\left(C_N+\alpha\exp\left({\frac{C_{N}}{\alpha}t}\right)+\frac{\|\Omega_r\|_{\cH^N}}{\alpha}\right)\|f_r\|_{\cH^N}\\&\quad + \left(\frac{C_{N+1}}{\alpha}+C_N+\alpha\exp\left({\frac{C_{N}}{\alpha}t}\right)+\frac{\|\Omega_r\|_{\cH^N}}{\alpha}\right)\|f_r\|_{\cH^N}^2.
\end{align*}
Finally, it remains to bound 
\begin{equation*}
    I_3=\alpha R(\de_R \Psi_r + \de_R \Ps) (\de_\beta \fe + \de_\beta f_r).
\end{equation*}
Following the same strategy as for the previous terms and using the estimates \eqref{eq:est-Psir-sing-infty}-\eqref{eq:est-Psir-beta}, we infer 
\begin{equation*}
    \left|\left\langle \de^N\left(\alpha R\de_R\Psi_r\de_\beta\fe\right), \de^N f_r\right\rangle_{L^2}\right|
    %&\leq \alpha\left(\|R\de_R\Psi_r\|_{\cW^{\frac{N}{2},\infty}}+\|\de^{N}R\de_R\Psi_r\|_{L^{2}}\right)\|\de_\beta\fe\|_{\cH^{N}}\|f_r\|_{H^N}\\
    \leq C_{N+1}\exp\left({\frac{C_{N+1}}{\alpha}t}\right)\left(C_N+\alpha\exp\left({\frac{C_{N}}{\alpha}t}\right)+\|\Omega_r\|_{\cH^N}\right)\|f_r\|_{\cH^N}.
\end{equation*}
Moreover, proceeding as for $I_{1,4}$ by separating the cases $i=0$, $1\leq i\leq N/2$ and $N/2< i\leq N$ and from the estimates of Lemma \ref{lem:embedding} as well as Lemma \ref{lem:Psir} we have
\begin{equation*}
     \left|\left\langle \de^N\left(\alpha R\de_R\Psi_r\de_\beta f_r\right), \de^N f_r\right\rangle_{L^2}\right|\leq \alpha\|R\de_R\de_\beta\Psi_r\|_{\cH^N}\|f_r\|_{\cH^N}^2\leq C\left(C_N+\alpha\exp\left({\frac{C_N}{\alpha}t}\right)+\|\Omega_r\|_{\cH^N}\right) \|f_r\|_{\cH^N}^2
\end{equation*}
Finally, we prove along the same lines and upon using \eqref{est:Psiapp ell}  that
\begin{equation*}
     \left|\left\langle \de^N\left(\alpha R\de_R\Ps\de_\beta\fe\right), \de^N f_r\right\rangle_{L^2}\right|\leq C_{N+1}\exp\left({\frac{C_{N+1}}{\alpha}t}\right)\left(C_{N}+\alpha\exp\left({\frac{C_{N}}{\alpha}t}\right)\right)\|f_r\|_{\cH^N},
\end{equation*}
and 
\begin{equation*}
    \left|\left\langle \de^N\left(\alpha R\de_R\Ps\de_\beta f_r\right), \de^N f_r\right\rangle_{L^2}\right|\leq \left(C_{N}+\alpha\exp\left({\frac{C_N}{\alpha}t}\right)\right)\|f_r\|_{\cH^N}^2.
\end{equation*}
We conclude that
\begin{align*}
    \left|\left\langle I_3, f_r\right\rangle_{\cH^N}\right|&\leq C_{N+1}\exp\left({\frac{C_{N+1}}{\alpha}t}\right)\left(C_N+\alpha\exp\left({\frac{C_{N}}{\alpha}t}\right)+\|\Omega_r\|_{H^N}\right)\|f_r\|_{\cH^N}\\
    &\quad + \left(C_N+\alpha\exp\left({\frac{C_N}{\alpha}t}\right)+\|\Omega\|_{\cH^N}\right)\|f_r\|_{\cH^N}^2, 
\end{align*}
where we have omitted the estimates on the weighted norms which can be recovered by combining the method detailed above and in to prove the remainder estimates in \cite[Section 6]{tarek2}.
Combing the estimates for $I_i$ with $i=1,2,3$ and up to modifying $C_{N+1}$ we conclude that 
\begin{align*}
     \left|\left\langle \mathcal{T}(f), f_r \right\rangle_{\cH^N}\right|&
     \leq C_{N+1}\exp\left({\frac{C_{N+1}}{\alpha}t}\right)\left(C_{N+1}+C_N+\alpha\exp\left({\frac{C_{N}}{\alpha}t}\right)+\frac{\|\Omega_r\|_{\cH^N}}{\alpha}\right)\|f_r\|_{\cH^N}\\
     &\quad +\left(\frac{C_{N+1}}{\alpha}+C_N+\alpha\exp\left({\frac{C_{N}}{\alpha}t}\right)+\frac{\|\Omega_r\|_{\cH^N}}{\alpha}\right)\|f_r\|_{\cH^N}^2.
\end{align*}
Observing that $C_N\leq C_{N+1}\leq \alpha^{-1}C_{N+1}$ yields the desired estimate. This completes the proof.
\end{proof}

Next we estimate the contributions stemming from $\mathrm{(RHS)}_\eta^1$ and $\mathrm{(RHS)}_\xi^1$ respectively. 

\begin{lem}\label{lem:RHS1}
    The following hold true for $\mathrm{(RHS)}_\eta^1$ and $ \mathrm{(RHS)}_\xi^1$ as defined in \eqref{eq:rhseta1} and \eqref{eq:rhsxi1} respectively,  
    \begin{align}
        \left\langle \mathrm{(RHS)}_\eta^1, \eta_r\right\rangle_{\cH^N}&\lesssim  \left(C_N+\alpha\exp\left({\frac{C_N}{\alpha} t}\right)+\frac{\|\Omega_r\|_{\cH^N}}{\alpha}\right)C_N\exp\left({\frac{C_N}{\alpha} t}\right){{\|\eta_r\|_{\cH^N}}}\notag\\
        &\quad +\left(\frac{C_N}{\alpha}+\alpha \exp\left({\frac{C_N}{\alpha} t}\right)+\frac{\|\Omega_r\|_{\cH^N}}{\alpha}\right)\|\eta_r\|_{\cH^N}^2%(\|\eta_r\|_{\cH^N}+\|\xi_r\|_{\cH^N})
        \label{rhs_eta1},\\
        \left\langle \mathrm{(RHS)}_\xi^1, \xi_r\right\rangle_{\cH^N}&\lesssim \left(C_N+\alpha\exp\left({\frac{C_N}{\alpha} t}\right)+\frac{\|\Omega_r\|_{\cH^N}}{\alpha}\right)\left(C_N\exp\left({\frac{C_N}{\alpha} t}\right)+1\right){{\|\xi_r\|_{\cH^N}}}\notag\\
        &\quad+\left(\frac{C_N}{\alpha}+\alpha \exp\left({\frac{C_N}{\alpha} t}\right)+\frac{\|\Omega_r\|_{\cH^N}}{\alpha}\right)\|\xi_r\|_{\cH^N}^2%(\|\eta_r\|_{\cH^N}+\|\xi_r\|_{\cH^N})
    \label{rhs_xi1}.
    \end{align}
    % \begin{align}
    %     \left\langle \mathrm{(RHS)}_\eta^1, \eta_r\right\rangle_{\cH^N}&\lesssim  \left(C_N\eul^{\frac{C_N}{\alpha} t}+\frac{\|\Omega_r\|_{\cH^N}}{\alpha}\right)\left(\|\eta_r\|_{\cH^N}+\|\bar{\eta}_0\|_{\cH^N}e^{\frac{C_N}{\alpha} t} \right){{\|\eta_r\|_{\cH^N}}}+\frac{C_N}{\alpha}\|\eta_r\|^2_{\cH^N}\label{rhs_eta1},\\
    % \left\langle \mathrm{(RHS)}_\xi^1, \xi_r\right\rangle_{\cH^N}&\lesssim \left(C_Ne^{\frac{C_N}{\alpha} t}+\frac{\|\Omega_r\|_{\cH^N}}{\alpha}\right)\left(\|\xi_r\|_{\cH^N}+\|{\xi}_0^{\alpha, \delta}\|_{\cH^N}e^{\frac{C_N}{\alpha} t} +1\right){{\|\xi_r\|_{\cH^N}}}+\frac{C_N}{\alpha}\|\xi_r\|^2_{\cH^N}
    % \label{rhs_xi1}.
    % \end{align}
\end{lem}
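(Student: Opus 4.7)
The plan is to split each right-hand side into three natural pieces and control them term by term using Lemma \ref{lem:Psir}, the bounds \eqref{eq:est-fapp} on $\Ps(\Ome)$ and on $\et,\csi$, together with the product/Moser inequality in $\cH^N$ (valid since $N\geq 3$ gives $\cH^N\hookrightarrow L^\infty$ by Lemma \ref{lem:embedding}). Recall that
\[
\mathrm{(RHS)}_\eta^1=\frac{\cL(\Ome)}{2\alpha}\eta_r+\bigl[\cos(2\beta)\de_\beta\Psi_r-\tfrac12\sin(2\beta)\de_{\beta\beta}\Psi_r\bigr]\eta_r+\bigl[\cos(2\beta)\de_\beta\Psi_r-\tfrac12\sin(2\beta)\de_{\beta\beta}\Psi_r\bigr]\et,
\]
and analogously for $\mathrm{(RHS)}_\xi^1$ with $\eta_r\to\xi_r$ and $\et\to\csi-1$. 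The trigonometric factors depend only on $\beta$ and act as harmless $\cW^{N,\infty}$-multipliers on $\cH^N$, so I may drop them in each bound.

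For the first piece, $\tfrac{\cL(\Ome)}{2\alpha}=2\Ps(\Ome)/\sin(2\beta)$ up to constants, but more directly $\tfrac{\cL(\Ome)}{2\alpha}$ equals a multiple of $\Ps(\Ome)$ evaluated as a radial function; by \eqref{eq:est-fapp} it satisfies $\|\tfrac{\cL(\Ome)}{2\alpha}\|_{\cW^{N,\infty}}\lesssim C_N/\alpha$, which produces a quadratic contribution $\tfrac{C_N}{\alpha}\|\eta_r\|_{\cH^N}^2$ (respectively $\tfrac{C_N}{\alpha}\|\xi_r\|_{\cH^N}^2$). For the second piece, the $\cH^N$-algebra inequality combined with \eqref{eq:est-Psir-beta} gives
\[
\bigl|\langle \de_\beta\Psi_r\,\eta_r,\eta_r\rangle_{\cH^N}\bigr|+\bigl|\langle \de_{\beta\beta}\Psi_r\,\eta_r,\eta_r\rangle_{\cH^N}\bigr|\lesssim \bigl(C_N+\alpha e^{\frac{C_N}{\alpha}t}+\tfrac{\|\Omega_r\|_{\cH^N}}{\alpha}\bigr)\|\eta_r\|_{\cH^N}^2,
\]
which is absorbed in the quadratic bracket of \eqref{rhs_eta1}.

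For the third piece, one uses Cauchy–Schwarz in $\cH^N$ (again via the product estimate) together with \eqref{eq:est-Psir-beta} for $\de_\beta\Psi_r,\de_{\beta\beta}\Psi_r$ and \eqref{eq:est-fapp} for the source:
\[
\bigl|\langle \de_\beta\Psi_r\,\et,\eta_r\rangle_{\cH^N}\bigr|\lesssim \|\de_\beta\Psi_r\|_{\cH^N}\|\et\|_{\cH^N}\|\eta_r\|_{\cH^N}\lesssim \bigl(C_N+\alpha e^{\frac{C_N}{\alpha}t}+\tfrac{\|\Omega_r\|_{\cH^N}}{\alpha}\bigr)C_N e^{\frac{C_N}{\alpha}t}\|\eta_r\|_{\cH^N},
\]
giving exactly the linear-in-$\|\eta_r\|_{\cH^N}$ term of \eqref{rhs_eta1}. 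For $\mathrm{(RHS)}_\xi^1$ the computation is identical with the source $\csi-1$ in place of $\et$; since \eqref{eq:est-fapp} (see also Remark \ref{rmk:explicitcsi}) yields $\|\csi-1\|_{\cH^N}\lesssim 1+(1+\alpha C_N)e^{C_N t/\alpha}\lesssim 1+C_N e^{C_N t/\alpha}$, the constant part $-1$ produces the additional $+1$ appearing inside the parenthesis of \eqref{rhs_xi1}. Summing the three pieces and using $C_N\leq C_{N+1}$ where needed gives the stated estimates.

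The only delicate point, as in Lemma \ref{lem:remT}, is the bookkeeping of the weighted seminorms $\|R^k\de_R^k\de_\beta^{m-k}(\cdot)\|_{L^2}$ when applying the product rule: one distributes derivatives, and for each resulting factor either invokes the $L^\infty$-embedding of Lemma \ref{lem:embedding} on the low-derivative factor or keeps the high-derivative factor in $L^2$ and uses its $\cH^N$-bound. Since the trigonometric multipliers carry no $R$-weight and $\de_\beta\Psi_r,\de_{\beta\beta}\Psi_r$ already have uniform-in-$\alpha$ $L^\infty$ estimates for mixed derivatives via \eqref{eq:est-Psir-reg-infty}--\eqref{eq:est-Psir-sing-infty}, no genuinely new ingredient beyond those already used for $\mathcal{T}$ is needed; this is the main (mechanical) obstacle but does not introduce any new loss in $\alpha$ beyond what is already visible in \eqref{eq:est-Psir-beta}.
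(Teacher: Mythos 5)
Your proposal follows the same structure as the paper's proof: split $\mathrm{(RHS)}_\eta^1$ (resp.\ $\mathrm{(RHS)}_\xi^1$) into the $\cL(\Ome)/(2\alpha)$-multiplier term, the $\de_\beta\Psi_r,\de_{\beta\beta}\Psi_r$ terms against $\eta_r$ (quadratic), and against $\et$ (resp.\ $\csi-1$, linear), and bound each via a product/algebra estimate in $\cH^N$ combined with Lemma~\ref{lem:Psir} and the bounds on the LOM solution. The only substantive difference is in the first piece, where the paper deliberately uses the $L^2$-based bound $\|\cL(\Ome)\|_{\cH^N}\lesssim C_N$ together with a $j<N$ / $j=N$ case split, while you invoke the $\cW^{N,\infty}$-multiplier bound; both are valid and give the same $C_N/\alpha$ constant, so this is a cosmetic change rather than a different route.

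Two small points of imprecision, neither of which affects the outcome: (i) the correct source for the radial bound $\|\cL(\Ome)/(2\alpha)\|_{\cW^{N,\infty}}\lesssim C_N/\alpha$ is Lemma~\ref{StimeLg} (applied to $g$, recalling $\cL(\Ome)=\cL(g)$), not \eqref{eq:est-fapp}; extracting it ``from $\Ps$ by dividing by $\sin(2\beta)$'' is not well-defined at the zeros of $\sin(2\beta)$. (ii) Writing $\|\csi-1\|_{\cH^N}\lesssim 1+C_N e^{C_Nt/\alpha}$ is formally wrong because the constant $1$ has infinite $\cH^N$-norm; the clean version is to split $\csi-1$ and observe that the constant contributes $\bigl|\langle\de_\beta\Psi_r\cdot 1,\xi_r\rangle_{\cH^N}\bigr|\leq\|\de_\beta\Psi_r\|_{\cH^N}\|\xi_r\|_{\cH^N}$, which is exactly the ``$+1$'' inside the parenthesis of \eqref{rhs_xi1}.
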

\begin{proof}
    We prove \eqref{rhs_eta1}, the other one follows similarly. By using Cauchy-Schwartz inequality we have
    \begin{align*}
        \left\langle\frac{\cL(\Ome)}{2\alpha}\eta_r,\eta_r\right\rangle_{\cH^N}&\leq  \alpha^{-1}\|\cL(\Ome)\eta_r\|_{\cH^N}\|\eta_r\|_{\cH^N}.
    \end{align*}
    We want to estimate the first factor in the r.h.s. of the above inequality. We recall that $\cL(\cdot)$ is a radial function, so that expanding the norms we obtain
    \begin{equation}\label{zucca}
\|\cL(\Ome)\eta_r\|_{\cH^N}\lesssim\sum_{m=0}^N\sum_{i=0}^m\sum_{j=0}^i\|\partial_R^j\cL(\Ome)\partial_R^{i-j}\partial_{\beta}^{m-i}\eta_r\|_{L^2}+\|R^j\partial_R^j\cL(\Ome)R^{i-j}\partial_R^{i-j}\partial_{\beta}^{m-i}\eta_r\|_{L^2}.
    \end{equation}
    When $j<N$ we can put $\partial^j_R\cL(\Ome)$ and $R^j\partial^j_R\cL(\Ome)$ in $L^{\infty}$, in such a way we bound from above \eqref{zucca} by $\|\eta_r\|_{\cH^N}\|\mathcal{L}(\Ome)\|_{\cH^N}$, after using the one dimensional (since $\mathcal{L}$ is radial) Sobolev embedding. The case $j=N$ reduces to $\|\partial_R^N\cL(\Ome)\eta_r\|_{L^2}+\|R^N\partial_R^N\cL(\Ome)\eta_r\|_{L^2}$, in this case we may put $\eta_r$ in $L^{\infty}$ and conclude again by (two dimensional) Sobolev embedding. By means of \eqref{pesate1}, we may therefore conclude that
    \begin{equation*}
    \left\langle\frac{\cL(\Ome)}{2\alpha}\eta_r,\eta_r\right\rangle_{\cH^N}\lesssim\alpha^{-1}\|\cL(\Ome)\|_{\cH^N}\|\eta_r\|_{\cH^N}^2\lesssim \frac{C_N}{\alpha}\|\eta_r\|^2_{\cH^N},
    \end{equation*}
where we used again that $\cL(\Ome)=\cL(g)$ for which one has the bounds of Lemma \ref{StimeLg}. Next, we consider $\mathbb{P}_2\Psi_r=\mathcal{R}(\Omega)+\Ps(\Omega_r)$. Note that 
\begin{equation*}
     \cos(2\beta) \de_\beta\left(\mathcal{R}(\Omega)+\Ps(\Omega_r)\right) - \frac 1 2 \sin(2 \beta) \de_{\beta \beta}\left(\mathcal{R}(\Omega)+\Ps(\Omega_r)\right)=2\mathcal{R}_{\sin}(\Omega)+\frac{\cL(\Omega_r)}{2\alpha},
\end{equation*}
where $\mathcal{R}_{\sin}(\Omega)=\left\langle \mathcal{R}(\Omega),\sin(2\beta)\right\rangle_{L_\beta^2}$. 
By arguing as before by means of Sobolev-embeddings, Lemma \ref{lem:Psir} and \eqref{eq:est-fapp}, one infers
\begin{equation*}
    \left\|\left(2\mathcal{R}_{\sin}(\Omega)+\frac{\cL(\Omega_r)}{2\alpha}\right)\left(\eta_r+\et\right)\right\|_{\cH^N}\leq \left(C_N+\alpha\exp\left(\frac{C_N}{\alpha}t\right)+\frac{\|\Omega_r\|_{\cH^N}}{\alpha}\right)\left(\|\eta_r\|_{\cH^N}+C_N\exp\left(\frac{C_N}{\alpha}t\right)\right),
\end{equation*}
from which the desired estimate follows. We turn to the remaining terms, first
$$\langle\sin(2\beta)(\partial_{\beta\beta}\hE(\Omega)\eta_r,\eta_{r}\rangle_{\cH^N}.$$ It is enough to estimate
    $\|\sin(2\beta)(\partial_{\beta\beta}\hE(\Omega)\eta_r\|_{\cH^N}$. We focus on $\|\partial^i(\partial_{\beta\beta}\hE(\Omega))\partial^{N-i}\eta_r\|_{L^2}$. When $i\geq 2$, we can put $\partial^{N-i}\eta_r$ in $L^{\infty}$ and then use Sobolev embedding in two dimensions to bound this term by $\|\eta_r\|_{\cH^N}$. We obtain
    \begin{equation}\label{margherita}
        \sum_{i=2}^N\|\partial^i(\partial_{\beta\beta}\hE(\Omega))\partial^{N-i}\eta_r\|_{L^2}\leq \|\partial_{\beta\beta}\hE(\Omega)\|_{\cH^N}\|\eta_r\|_{\cH^N}\leq  \left(C_N+\alpha \exp\left({\frac{C_N}{\alpha} t}\right)+\|\Omega_r\|_{\cH^N}\right)\|\eta_r\|_{\cH^N},
    \end{equation}
    where we used Proposition \ref{lam:Rest} and \eqref{eq:est-fapp} in the last step. In the cases $i=0,1$ we can put $\partial^i\partial_{\beta\beta}\hE(\Omega)$ in $L^{\infty}$, use Sobolev embedding and bound it by the same quantity in the r.h.s. of \eqref{margherita}, again using Proposition \ref{lam:Rest} at the level $k=2,3$. Similarly,
    \begin{equation*}
\|\sin(2\beta)\partial_{\beta\beta}\hE(\Omega)\eta_{\rm app}\|_{\cH^N}\lesssim \|\partial_{\beta\beta}\hE(\Omega)\|_{\cH^N}\|\eta_{\rm app}\|_{\cH^N}\lesssim
\left(C_N+\alpha\exp\left({\frac{C_N}{\alpha} t}\right)+\|\Omega_r\|_{\cH^N}\right) C_Ne^{\frac{C_N}{\alpha} t},
    \end{equation*}
    where in the last step we used \eqref{eq:est-LOM}.
    In view of \eqref{eq:est-Psir-beta}, the estimate for the term $\langle\cos(2\beta)\partial_{\beta}\Psi_r(\eta_r+\eta_{\rm app}),\eta_{r}\rangle_{\cH^N}$, may be obtained following word by word what done before. The last term is treated in a similar way.
The inequality for $\langle\mathrm{(RHS)}_\xi^1, \xi_r \rangle $ can be obtained in the same manner and therefore we omit it.
%\textcolor{red}{revise this last part}
% 
    %$$\cos(2\beta)\de_\beta\Ps(\Omega_r)-\frac{1}{2}\sin(2\beta)\de_{\beta\beta}\Ps(\Omega_r)=\frac{\mathcal{L}(\Omega_r)}{2\alpha}$$}
\end{proof}

We proceed to provide bounds for the terms involving $\text{(RHS)}_\eta^2, \text{(RHS)}_\xi^2$ appearing in the (RHS) of the equations for $\eta_r$ and $\xi_r$ respectively. 

\begin{lem}\label{lem:RHS2}
The following hold true for $\mathrm{(RHS)}_\eta^2$ and $ \mathrm{(RHS)}_\xi^2$:
    \begin{align*}
        \left\langle \mathrm{(RHS)}_\eta^2, \eta_r\right\rangle_{\cH^N}& \le \left(C_N+\alpha\exp\left({\frac{C_N}{\alpha}t}\right)\right)\left(C_N\exp\left({\frac{C_N}{\alpha}t}\right)+\|\Omega_r\|_{\cH^N}\right)\|\eta_r\|_{\cH^N}\\
        &\quad +\left(C_N+\alpha\exp\left({\frac{C_N}{\alpha}t}\right)+\|\Omega_r\|_{\cH^N}\right)\|\eta_r\|_{\cH^N}^2,\\
        \left\langle \mathrm{(RHS)}_\xi^2, \xi_r\right\rangle_{\cH^N}&\le \left(C_N+\alpha\exp\left({\frac{C_N}{\alpha}t}\right)\right)\left(C_N\exp\left({\frac{C_N}{\alpha}t}\right)+\|\Omega_r\|_{\cH^N}\right)\|\xi_r\|_{\cH^N}\\
        &\quad +\left(C_N+\alpha\exp\left({\frac{C_N}{\alpha}t}\right)+\|\Omega_r\|_{\cH^N}\right)\|\xi_r\|_{\cH^N}^2.
    \end{align*}
\end{lem}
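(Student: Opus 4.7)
The plan is to reduce both bounds, by Cauchy--Schwarz in $\cH^N$, to estimates on $\|\mathrm{I}_j(\eta_r+\eta_{\text{app}})\|_{\cH^N}$ and $\|\mathrm{I}_j(\xi_r+\xi_{\text{app}}-1)\|_{\cH^N}$ for $j\in\{4,6,7\}$. The structural feature that makes this tractable is that the prefactors $\alpha R$, $\alpha R$, $\alpha^2 R^2$ in $\mathrm{I}_4,\mathrm{I}_6,\mathrm{I}_7$ match precisely the scalings of $R\de_R$, $R\de_{R\beta}$, $R^2\de_{RR}$ that are controlled \emph{uniformly in} $\alpha$ by Proposition \ref{lam:Rest}. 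Consequently, the $1/\alpha$-loss inherent in $\Psa$ (and in $\Ps(\Omega_r)$ via Lemma \ref{lem:Psir}) is absorbed cleanly by these prefactors -- in sharp contrast to the transport terms treated in Lemma \ref{lem:remT}, where no such compensation occurs.

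Concretely, I would split $\mathrm{I}_j=\mathrm{I}_{j,1}+\mathrm{I}_{j,2}$ into the $\Psi_{\text{app}}=\Ps(\Ome)$ and $\Psi_r$ contributions. Using \eqref{est:Psiapp ell} together with the decomposition $\Ps(\Ome)=\Psa(\Ome)+\mathcal{R}(\Ome)$ and Proposition \ref{lam:Rest} yields
\[
\|\mathrm{I}_{j,1}\|_{\cH^N}\lesssim \|\Ome\|_{\cH^N}\lesssim C_N+\alpha\eul^{\frac{C_N}{\alpha}t}.
\]
For $\mathrm{I}_{j,2}$, the further decomposition $\Psi_r=\hE(\Omega)+\mathcal{R}(\Omega)+\Ps(\Omega_r)$ of \eqref{def:psi-r} allows one to treat $\hE(\Omega)$ and $\mathcal{R}(\Omega)$ by Proposition \ref{lam:Rest} uniformly in $\alpha$, while the $1/\alpha$-loss of $\Ps(\Omega_r)$ in \eqref{eq:crucial} is compensated by the explicit $\alpha$-prefactor exactly as in \eqref{eq:est-Psir-R}. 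Altogether,
\[
\|\mathrm{I}_{j,2}\|_{\cH^N}\lesssim \|\Omega\|_{\cH^N}\lesssim C_N+\alpha\eul^{\frac{C_N}{\alpha}t}+\|\Omega_r\|_{\cH^N}.
\]

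To close the estimate, since $N\geq 3$ one has $\cH^N\hookrightarrow L^\infty$ by Lemma \ref{lem:embedding}, and a weighted Moser-type product estimate -- following the trichotomy $i=0$, $1\leq i\leq N/2$, $N/2<i\leq N$ used in Lemmas \ref{lem:remT} and \ref{lem:RHS1} to distribute the $R^i\de_R^i\de_\beta^{m-i}$ derivatives via Leibniz -- gives $\|\mathrm{I}_j f\|_{\cH^N}\lesssim \|\mathrm{I}_j\|_{\cH^N}\|f\|_{\cH^N}$. Applying this with $f=\eta_r+\eta_{\text{app}}$, invoking $\|\eta_{\text{app}}\|_{\cH^N}\lesssim C_N\eul^{\frac{C_N}{\alpha}t}$ from \eqref{eq:est-fapp}, and pairing with $\eta_r$ by Cauchy--Schwarz produces the claimed bound for $\mathrm{(RHS)}_\eta^2$. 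The estimate for $\mathrm{(RHS)}_\xi^2$ is structurally identical, the extra constant $-1$ in $\xi_r+\xi_{\text{app}}-1$ contributing $\|\mathrm{I}_j\|_{\cH^N}\|\xi_r\|_{\cH^N}$ which is absorbed into the first summand of the stated bound. The main obstacle is purely bookkeeping: one must establish the analogues of \eqref{eq:est-Psir-R} for $\|\alpha R\de_{R\beta}\Psi_r\|_{\cH^N}$ and $\|\alpha^2 R^2\de_{RR}\Psi_r\|_{\cH^N}$, which are not stated explicitly in Lemma \ref{lem:Psir}; these follow by applying Proposition \ref{lam:Rest} term by term to the decomposition \eqref{def:psi-r}, and once available the energy estimate collapses onto the pattern already used in the previous two lemmas.
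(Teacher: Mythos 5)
Your proposal is correct and follows essentially the same route as the paper: split $\mathrm{I}_j=\mathrm{I}_{j,1}+\mathrm{I}_{j,2}$ according to $\Ps(\Ome)$ versus $\Psi_r$, bound each factor in $\cH^N$ using Proposition \ref{lam:Rest} via the decompositions $\Ps(\Ome)=\Psa(\Ome)+\mathcal{R}(\Ome)$ and $\Psi_r=\hE(\Omega)+\mathcal{R}(\Omega)+\Ps(\Omega_r)$, and then close with the Leibniz-dichotomy / Cauchy--Schwarz pairing. You also correctly and explicitly flag a point the paper's proof elides, namely that the bounds $\alpha\|R\de_{R\beta}\Psi_r\|_{\cH^N}$ and $\alpha^2\|R^2\de_{RR}\Psi_r\|_{\cH^N}\lesssim\|\Ome\|_{\cH^N}+\|\Omega_r\|_{\cH^N}$ are not literally contained in Lemma \ref{lem:Psir} but must be read off from Proposition \ref{lam:Rest} applied termwise to \eqref{def:psi-r}; this is a modest gain in precision but does not change the argument.
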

The proof follows the same strategy as in Lemma \ref{lem:remT} for the transport terms, more specifically the one for $I_1$ and $I_3$ defined in the respective proof.

The terms $\mathrm{I}_j, \, j \in \{4, 6, 7\}$ are exactly the same terms (with the same notation) as in \cite[Section 6]{tarek2}, we adapt the proof to our setting. %  it is easy to provide estimates of $\text{(RHS)}_\eta^2, \text{(RHS)}_\xi^2$.

\begin{proof}
 We sketch the idea of the proof. The main point is to notice that, for any $j \in \{4,6,7\}$, the term $\mathrm{I}_j= \mathrm{I}_{j,1}+\mathrm{I}_{j,2}$ can be decomposed in the sum of $\mathrm{I}_{j,1}=\mathcal{F}^2_j(\Psi_{\mathrm{app}})$, depending only on $\Psi_{\mathrm{app}}$ in \eqref{eq:psiapp},
and $\mathrm{I}_{j,2}=\mathcal{F}^r_j(\Psi_r)$, depending only on the difference $\Psi_r$. We emphasize that as the decomposition \eqref{def:psi-r} of $\Psi_r$ differs from the one in \cite{tarek2}, the estimates for $\mathrm{I}_{j,2}$ need to be adapted. Appealing to Proposition \ref{lem:35} to bound $\Ps$ and the estimate for $\eta_{\mathrm{app}}$ given by Proposition \ref{lem:35}, one has that
\be
\ba
\left|(\mathrm{I}_{j,1}(\eta_r+\eta_{\mathrm{app}}), \eta_r)_{\cH^N} \right|& \lesssim  \alpha\left(\| R\de_R\Psi_{\mathrm{app}}+ R\de_{\beta R}\Psi_{\mathrm{app}}+\alpha R^2\de_{RR}\Psi_{\mathrm{app}}\|_{\cH^{N}}\right) (\|\eta_r \|_{\cH^N} +\|\eta_{\mathrm{app}}\|_{\cH^N})\|\eta_r \|_{\cH^N}\\
& \lesssim  \|\Ome\|_{\cH^N}\left(\|\eta_r\|_{\cH^N} +C_N\exp\left({\frac{C_N}{\alpha} t}\right)\right)\|\eta_r\|_{\cH^N}, 
\ea
\ee
where we proceeded like for $I_{1,2}$ in the proof of Lemma \ref{lem:remT} and in particular exploited \eqref{est:Psiapp ell}. 
Similarly,
\be
\ba
\left|(\mathrm{I}_{j,1}(\csi+\xi_r-1), \xi_r)_{\cH^N}\right| &  \lesssim  \|\Ome\|_{\cH^N}\left(1+\|\xi_r\|_{\cH^N} +C_N \exp\left({\frac{C_N}{\alpha} t}\right)\right)\|\xi_r\|_{\cH^N}. 
\ea
\ee
To control the contributions coming from $I_{j,2}$, we rely on the elliptic estimates of Lemma \ref{lem:Psir}. For the sake of a short exposition, we only consider the contribution of the $\cH^N$ scalar-product involving derivatives of $\de^N$. The lower order derivatives contributions are dealt with similarly. 
\be
\ba
(\de^N (\mathrm{I}_{j,2}(\eta_r+\eta_{\mathrm{app}})), \de^N \eta_r)_{L^2} & = \sum_{i=0}^N (\de^{i} (\mathrm{I}_{j,2})\de^{N-i}(\eta_r+\eta_{\mathrm{app}})), \de^N \eta_r)_{L^2} \\
&= \sum_{i=0}^{\frac{N}{2}} (\de^{i} (\mathrm{I}_{j,2}) \de^{N-i} (\eta_r+\eta_{\mathrm{app}}), \de^N \eta_r)_{L^2} =:\text{(A)}\\
&\quad + \sum_{k=\frac{N}{2}}^N (\de^{N-i} (\mathrm{I}_{j,2}) \de^k (\eta_r+\eta_{\mathrm{app}}), \de^N \eta_r)_{L^2}=:\text{(B)},
\ea
\ee
where
\be\ba
\text{(A)} & \le \sum_{i=0}^{\lfloor \frac{N}{2}\rfloor}  \|\de^{i} (\mathrm{I}_{j,2})\|_{L^\infty} \|\de^{N-i} (\eta_r+\eta_{\mathrm{app}})\|_{L^2} \|\eta_r\|_{H^N}  \le C \|\mathrm{I}_{j,2}\|_{H^N} \|\eta_r+\eta_{\mathrm{app}}\|_{H^N} \|\eta_r\|_{H^N},
\ea\ee
where we used that $\lfloor\frac{N}{2}\rfloor+2\leq N$ for {$N \ge 3$} and the $2D$ Sobolev embedding $H^2 \hookrightarrow L^{\infty}$, see also Lemma \ref{lem:embedding}. Similarly
\be\ba
\text{(B)} & \le  \sum_{i=\lceil\frac{N}{2}\rceil}^N \|\de^{i} (\mathrm{I}_{j,2})\|_{L^2} \| \de^{N-i} (\eta_r+\eta_{\mathrm{app}})\|_{L^{\infty}}  \|\eta_r\|_{H^N} \le C  \|\mathrm{I}_{j,2}\|_{H^{N}} \| \eta_r+\eta_{\mathrm{app}}\|_{H^N}  \|\eta_r\|_{H^N}.
\ea\ee
The contributions including weights are proving along the same lines. Upon observing that Lemma \ref{lem:Psir} allows one to infer that 
\begin{equation*}
    \|\mathrm{I}_{j,2}\|_{\cH^{N}}\lesssim \|\Ome\|_{\cH^N}+\|\Omega_r\|_{\cH^N},
\end{equation*}
and applying Proposition \ref{lem:35} to bound $\et,\csi$ we conclude that
\be
\ba
\left|(\mathrm{I}_{j,2}(\eta_r+\eta_{\mathrm{app}}), \eta_r)_{\cH^N} \right|& \leq \left(\|\Ome\|_{\cH^N}+\|\Omega_r\|_{\cH^N}\right)\left(\|\eta_r\|_{\cH^N}+C_N\exp\left({\frac{C_N}{\alpha}t}\right)\right)\|\eta_r\|_{\cH^N}\\
\left|(\mathrm{I}_{j,2}(\xi_r+\csi-1), \xi_r)_{\cH^N}\right| &  \le \left(\|\Ome\|_{\cH^N}+\|\Omega_r\|_{\cH^N}\right)\left(1+\|\xi_r\|_{\cH^N}+C_N\exp\left({\frac{C_N}{\alpha}t}\right)\right)\|\xi_r\|_{\cH^N}.
\ea
\ee
The final estimates follow upon applying \eqref{eq:est-LOM} for $\|\Ome\|_{\cH^N}$.
\end{proof}
%%%%%
 
%%%%%
Next, we deal with the terms $\text{(RHS)}_\eta^3, \text{(RHS)}_\xi^3$ for which, we derive the following estimates by exploiting the elliptic regularity of $\hE(\Omega)$.
\begin{lem}\label{lem:RHS3}
    The following hold true for $\mathrm{(RHS)}_\eta^3$ and $ \mathrm{(RHS)}_\xi^3$:
    \begin{align*}
        \left\langle \mathrm{(RHS)}_\eta^3, \eta_r\right\rangle_{\cH^N}&\lesssim
        \left(C_N+C_N^2\exp\left(\frac{C_N}{\alpha}t\right)+\alpha C_N\exp\left(\frac{2C_N}{\alpha}t\right)\right)\|\eta_r\|_{\cH^N}\\
        &+\left(\left(\frac{1}{\alpha}+\frac{C_N}{\alpha}\exp\left(\frac{C_N}{\alpha}t\right)\right)\|\Omega_r\|_{\cH^N}+\left(C_N+\alpha\exp\left(\frac{C_N}{\alpha}t\right)\right)\|\xi_r\|_{\cH^N}\right)\|\eta_r\|_{\cH^N}\\
        &+\left(1+\frac{1}{\alpha}\right)\|\Omega_r\|_{\cH^N}\|\xi_r\|_{\cH^N}\|\eta_r\|_{\cH^N},
        %\left(C_N \exp\left({\frac{C_N}{\alpha}t}\right)+C_N \exp\left({\frac{C_N}{\alpha}t}\right)(\|\xi\|_{\cH^N}+\|\Omega_r\|_{\cH^N})+\|\Omega_r\|_{\cH^N}\|\xi\|_{\cH^N}\right)\|\eta_r\|_{\cH^N},
        \end{align*}
        \begin{align*}
        \left\langle \mathrm{(RHS)}_\xi^3, \xi_r\right\rangle_{\cH^N}&\lesssim
        \left(C_N^2\exp\left(\frac{C_N}{\alpha}t\right)+\alpha C_N\exp\left(\frac{2C_N}{\alpha}t\right)\right)\|\xi_r\|_{\cH^N}\\
        &+\left(\frac{C_N}{\alpha}\exp\left(\frac{C_N}{\alpha}t\right)\|\Omega_r\|_{\cH^N}+\left(C_N+\alpha \exp\left(\frac{C_N}{\alpha}t\right)\right)\|\eta_r\|_{\cH^N}\right)\|\xi_r\|_{\cH^N}\\
        &+\frac{1}{\alpha}\|\Omega_r\|_{\cH^N}\|\xi_r\|_{\cH^N}\|\eta_r\|_{\cH^N}.
        %\lesssim  \left(C_N\exp\left({\frac{C_N}{\alpha}t}\right)+C_N \exp\left({\frac{C_N}{\alpha}t}\right)(\|\eta_r\|_{\cH^N}+\|\Omega_r\|_{\cH^N})+\|\Omega_r\|_{\cH^N}\|\eta_r\|_{\cH^N}\right)\|\xi_r\|_{\cH^N}.
        \end{align*}
\end{lem}

\begin{proof}
We observe that $\mathbb{P}_2(\Psi_r)=\mathcal{R}(\Omega)+\Ps(\Omega_r)$ and 
\begin{equation*}
    2 \left(\mathcal{R}(\Omega)+\Ps(\Omega_r)\right) + \sin(2\beta) \de_\beta\left(\mathcal{R}(\Omega)+\Ps(\Omega_r)\right) + (\cos^2\beta) \de_{\beta \beta} \left(\mathcal{R}(\Omega)+\Ps(\Omega_r)\right)=\frac{\cL^c(\Omega_r)}{2\alpha}+\mathcal{R}_{\cos}(\Omega),
\end{equation*}
where $\mathcal{R}_{\cos}(\Omega)=\left\langle \mathcal{R}(\Omega),\cos(2\beta)\right\rangle_{L_\beta^2}$. 
Arguing as in the proof of Lemma \ref{lem:RHS1}, we infer that
\begin{align*}
    &\left|\left\langle\left(\frac{\cL^c(\Omega_r)}{2\alpha}+\mathcal{R}_{\cos}(\Omega)\right)(1-\xi_{\mathrm{app}}-\xi_r),\eta_r\right\rangle_{\cH^N}\right|\\
    &\lesssim \left(\frac{\|\mathbb{P}_2(\Omega_r)\|_{\cH^N}}{\alpha}+\|\Omega_r\|_{\cH^N}+\|\Ome\|_{\cH^N}\right)(1+\|\csi\|_{\cH^N}+\|\xi_{r}\|_{\cH^N})\|\eta_r\|_{\cH^N}\\
    &\lesssim \left(\frac{\|\Omega_r\|_{\cH^N}}{\alpha}+C_N+\alpha\exp\left(\frac{C_N}{\alpha}t\right)\right)\left(1+C_N\exp\left(\frac{C_N}{\alpha}t\right)+\|\xi_r\|_{\cH^N}\right)\|\eta_r\|_{\cH^N},
\end{align*}
where we used \eqref{eq:est-fapp} in the last step. To bound the terms involving $\hE(\Omega)$ one proceeds analogously to the proof of Lemma \ref{lem:RHS1}, where we oberse that $\mathbb{P}_2(\hE(\Omega))=0$ and hence its derivative in the angular direction admit  uniform (in $\alpha$) $\cH^N$-bounds. It follows from Proposition \ref{lam:Rest} that
%\begin{equation*}
%    \|\de_\beta\hE(\Omega)\|_{\cH^k}\leq C  \|\de_{\beta\beta}\hE(\Omega)\|_{\cH^k}\leq C\|\Omega\|_{\cH^k}
%\end{equation*}
%upon exploiting the series expansion of $\hE(\Omega)$ as in the proof of Lemma \ref{lam:Rest}. By consequence,
\begin{equation*}
    \left\|2 \hE(\Omega) - \sin(2\beta) \de_\beta \hE(\Omega) + (\sin^2\beta) \de_{\beta \beta} \hE(\Omega)\right\|_{\cH^N}\lesssim \|\Omega\|_{\cH^N}\lesssim\left(C_N\exp\left({\frac{C_N}{\alpha}t}\right)+\|\Omega_r\|_{\cH^N}\right).
\end{equation*}
It then follows arguing as in the proof of Lemma \ref{lem:remT} that
\begin{align*}
        \left|\left\langle \text{(RHS)}_\eta^3, \eta_r\right\rangle_{\cH^N}\right|&= \left|\left\langle [2 \hE(\Omega) - \sin(2\beta) \de_\beta \hE(\Omega) + (\sin^2\beta) \de_{\beta \beta} \hE(\Omega)](1-\csi-\xi_r), \eta_r\right\rangle_{\cH^N}\right|
        \\
        &\le C \|\Omega\|_{\cH^N}\left(1+\|\csi\|_{\cH^N}+\|\xi_r\|_{\cH^N}\right)\|\eta_r\|_{\cH^N}\\&
        \le C\left(C_N+\alpha\exp\left({\frac{C_N}{\alpha}t}\right)+\|\Omega_{r}\|_{\cH^N}\right)\left({(1+C_{N}\exp\left({\frac{C_N}{\alpha}t}\right)} + \|\xi_r\|_{\cH^N}\right)\|\eta_r\|_{\cH^N}\\
        &\le C_N{(1+\alpha C_{N+1})\exp\left({\frac{2C_N}{\alpha}t}\right)} \|\eta_r\|_{\cH^N}\\
        &\quad + \left(C_N\exp\left({\frac{C_N}{\alpha}t}\right)(\|\xi_r\|_{\cH^N}+\|\Omega_r\|_{\cH^N})+\|\Omega_r\|_{\cH^N}\|\xi_r\|_{\cH^N}\right)\|\eta_r\|_{\cH^N}.
    \end{align*}
The estimate for the term involving $\xi$ follows along the same lines. 
\end{proof}

We are now ready to proof the desired estimates on the remainder terms $(\Omega_r,\eta_r,\xi)$.

\begin{proof}[Proof of Proposition \ref{prop:rem}]
    Combining the statements from Lemma \ref{lem:remT}-\ref{lem:RHS3}, we conclude that
\begin{align*}
     \frac{\mathrm{d}}{\mathrm{d}t}F(t)^2&\leq C_{N+1}\exp\left({\frac{C_{N+1}}{\alpha}t}\right)\left(C_{N+1}+\alpha\exp\left({\frac{C_N}{\alpha}t}\right)\right)
     F(t)\\
     &\quad +\left(\frac{C_{N+1}}{\alpha}+C_N\exp\left({\frac{C_N}{\alpha}t}\right)+\frac{C_{N+1}}{\alpha}\exp\left({\frac{C_{N+1}}{\alpha}t}\right)\right)F(t)^2 +\frac{C}{\alpha} F(t)^3\\
     &\leq 
     C_{N+1}\exp\left({\frac{C_{N+1}}{\alpha}t}\right)\left(C_{N+1}+\alpha\exp\left({\frac{C_N}{\alpha}t}\right)\right)
     F(t)\\
     &\quad 
     +\left(\frac{C_{N+1}}{\alpha}+\frac{C_{N+1}}{\alpha}\exp\left({\frac{C_{N+1}}{\alpha}t}\right)\right)F(t)^2+\frac{C}{\alpha} F(t)^3.
\end{align*}
% \begin{align*}
%      \frac{\mathrm{d}}{\mathrm{d}t}F(t)^2&\leq C_{N+1}\exp\left({\frac{C_{N+1}}{\alpha}t}\right)\left(C_{N+1}+\alpha\exp\left({\frac{C_N}{\alpha}t}\right)\right)
%      F(t)+C_{N+1}\exp\left({\frac{C_{N+1}}{\alpha}t}\right)\frac{\|\Omega_r\|_{\cH^N}}{\alpha}F(t)\\
%      &\quad +\left(\frac{C_{N+1}}{\alpha}+C_N\exp\left({\frac{C_N}{\alpha}t}\right)\right)F(t)^2  +\frac{\|\Omega_r\|_{\cH^N}}{\alpha}F(t)^2+F(t)^3\\
%      &\leq 
%      C_{N+1}\exp\left({\frac{C_{N+1}}{\alpha}t}\right)\left(C_{N+1}+\alpha\exp\left({\frac{C_N}{\alpha}t}\right)\right)
%      F(t)\\
%      &\quad 
%      +\left(\frac{C_{N+1}}{\alpha}+\frac{C_{N+1}}{\alpha}\exp\left({\frac{C_{N+1}}{\alpha}t}\right)\right)F(t)^2+\frac{1}{\alpha} F(t)^3.
% \end{align*}
By means of a classical approximation argument one obtains
\begin{equation}\label{eq:boot}
\ba
F(t)&\leq F(0)+C_{N+1}\int_0^tC_{N+1}\exp\left({\frac{C_{N+1}}{\alpha}\tau}\right)+\alpha \exp\left({\frac{2C_{N+1}}{\alpha}t}\right) \, d\tau \\ &\quad +\int_0^{t}\left(\frac{C_{N+1}}{\alpha}+\frac{C_{N+1}}{\alpha}\exp\left({\frac{C_{N+1}}{\alpha}\tau}\right)\right)F(\tau)d\tau+C
\int_0^{t}\frac{1}{\alpha} F(\tau)^2 \, d\tau.
\ea
\end{equation}
We recall that the initial condition satisfies $F(0)\lesssim C_{N+1}\alpha \leq \alpha^{1-\eps}$ for any $\eps>0$ for $\alpha$ sufficiently small. Owing to the bootstrap argument provided by the following Lemma, we conclude the proof.
\end{proof}
\begin{Lem}\label{eq:bootstrap}
    There exists $0<\alpha_0 < \frac 13$ such that for any $0<\alpha<\alpha_0$ if 
   \begin{equation}\label{hyp:boot}
        F(0)\leq \alpha^{1-\eps}, \quad \sup_{t\in[0,T]}F(t)\leq 4 \sqrt{\alpha}, \quad T\leq \frac{\alpha\log |\log(\alpha)|}{4C_{N+1}},
    \end{equation}
    then $\displaystyle\sup_{t\in[0,T]}F(t)\leq 3 \sqrt{\alpha}$.
    \end{Lem}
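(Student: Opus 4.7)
The plan is to close the bootstrap by feeding the a priori bound $F(\tau) \le 4\sqrt{\alpha}$ into \eqref{eq:boot} and then applying a Gronwall-type inequality. The cornerstone is the choice of time horizon $T \le \frac{\alpha\log|\log\alpha|}{4 C_{N+1}}$, tailored so that the problematic exponential factor satisfies $\eul^{C_{N+1}T/\alpha} \le |\log\alpha|^{1/4}$, a function of $\alpha^{-1}$ so slowly growing that any loss it generates is negligible against the gain $\sqrt{\alpha}$.

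First, I would isolate and bound the purely inhomogeneous part
\begin{equation*}
A(t):= F(0) + C_{N+1}\int_0^t\left(C_{N+1}\eul^{C_{N+1}\tau/\alpha} + \alpha\eul^{2C_{N+1}\tau/\alpha}\right)d\tau;
\end{equation*}
integrating explicitly and using $F(0)\le\alpha^{1-\eps}$ together with $C_{N+1}\sim\delta|\log|\log\alpha||^{1/2}$, one obtains $A(T) \lesssim \alpha^{1-\eps} + C_{N+1}\alpha|\log\alpha|^{1/4} + \alpha^2|\log\alpha|^{1/2}$, which is $o(\sqrt{\alpha})$. Second, I would linearise the cubic contribution using the bootstrap hypothesis: writing $\frac{1}{\alpha}F(\tau)^2 \le \frac{4}{\sqrt{\alpha}}F(\tau)$ reduces \eqref{eq:boot} to the linear Gronwall inequality
\begin{equation*}
F(t) \le A(t) + \int_0^t \wt B(\tau)F(\tau)\,d\tau, \qquad \wt B(\tau):= \frac{C_{N+1}}{\alpha}\left(1+\eul^{C_{N+1}\tau/\alpha}\right)+\frac{4}{\sqrt{\alpha}}.
\end{equation*}

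Third, direct integration gives $\int_0^T \wt B(\tau)\,d\tau \le \tfrac{1}{4}\log|\log\alpha| + |\log\alpha|^{1/4} + \frac{4T}{\sqrt{\alpha}}$, and the last summand is $O(\sqrt{\alpha}|\log|\log\alpha||^{1/2}) = o(1)$ by the very size of $T$. Gronwall then yields
\begin{equation*}
F(T) \lesssim A(T)\cdot |\log\alpha|^{1/4}\,\eul^{|\log\alpha|^{1/4}} \lesssim \delta|\log|\log\alpha||^{1/2}\,\alpha\,|\log\alpha|^{1/2}\,\eul^{|\log\alpha|^{1/4}}.
\end{equation*}

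The only truly subtle point -- and really the sole reason for which the specific choice of $T$ works -- is the final comparison: dividing by $\sqrt{\alpha}$, one checks that $\sqrt{\alpha}\cdot\eul^{|\log\alpha|^{1/4}}\cdot|\log\alpha|^{1/2}|\log|\log\alpha||^{1/2}\to 0$ as $\alpha\to 0$, since in logarithmic scale the dominant term is $-\tfrac{1}{2}|\log\alpha|$, which beats the $|\log\alpha|^{1/4}$ growth. Hence for $\alpha\le\alpha_0$ sufficiently small, $F(T) \le 3\sqrt{\alpha}$, which by continuity of $t\mapsto F(t)$ propagates to the full interval $[0,T]$, improves the bootstrap assumption, and closes the argument. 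Beyond the bookkeeping of the three competing scales -- $\sqrt{\alpha}$ (the target), $|\log\alpha|^{1/4}$ (the exponential loss) and $|\log|\log\alpha||^{1/2}$ (the loss hidden in $C_{N+1}$) -- which is exactly the compatibility condition built into the definition of $T$, I anticipate no further obstacle.
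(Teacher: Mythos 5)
Your proof is correct and follows the same route as the paper's: feed the a priori bound $F\le 4\sqrt\alpha$ into \eqref{eq:boot}, then close with the integral form of Gronwall's lemma, the key point being that $T\sim\alpha\log|\log\alpha|/(4C_{N+1})$ confines the exponential factor to $|\log\alpha|^{1/4}\eul^{|\log\alpha|^{1/4}+o(1)}$, which any power $\alpha^{1/2-\eps}$ dominates; the only (harmless) variation is that the paper absorbs $\tfrac1\alpha F^2\le 16$ entirely into the constant, whereas you keep the linearized term $\tfrac4{\sqrt\alpha}F$ inside the Gronwall kernel, adding merely the negligible $4T/\sqrt\alpha=o(1)$ to the exponent. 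One small bookkeeping slip: when multiplying $A(T)$ by the Gronwall factor you retain the middle summand $C_{N+1}\alpha|\log\alpha|^{1/4}$, but the actually dominant piece of $A(T)$ is $\alpha^{1-\eps}$ (since $\alpha^{-\eps}\gg|\log\alpha|^{1/2}$); nothing breaks, though, because $\alpha^{1/2-\eps}|\log\alpha|^{1/4}\eul^{|\log\alpha|^{1/4}}\to 0$ just as well.
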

\begin{proof}
  Because  of inequality \eqref{eq:boot} and assumptions \eqref{hyp:boot}, we have
  \begin{align*}
      F(t)&\leq \alpha^{1-\eps}+C_{N+1}\int_0^tC_{N+1}\exp\left({\frac{C_{N+1}}{\alpha}\tau}\right)+\alpha \exp\left({\frac{2C_{N+1}}{\alpha}t}\right) \, d\tau\\
      &\quad
      +\frac{C_{N+1}}{\alpha}\int_0^{t}\left(1+\exp\left({\frac{C_{N+1}}{\alpha}\tau}\right)\right)F(\tau)d\tau+
      16\alpha\int_0^{t}\frac{1}{\alpha}d\tau\\
      &\leq 
      \alpha^{1-\eps}+C_{N+1}\alpha |\log\alpha|^{\frac14}+\alpha^2|\log\alpha|^{\frac12}
      +\frac{C_{N+1}}{\alpha}\int_0^{t}\left(1+\exp\left({\frac{C_{N+1}}{\alpha}\tau}\right)\right)F(\tau)d\tau+
      \frac{\alpha\log|\log\alpha|}{4C_{N+1}}\\
      &\leq 3\alpha^{1-\varepsilon}+\frac{C_{N+1}}{\alpha}\int_0^{t}\left(1+\exp\left({\frac{C_{N+1}}{\alpha}\tau}\right)\right)F(\tau)\, d\tau,
  \end{align*}
  where in the last inequality we have chosen $\alpha$ small enough. By means of the integral form of Gronwall lemma we obtain
  \begin{equation*}
F(t)\leq 3\alpha^{1-\varepsilon}\exp{\left(\frac{C_{N+1}}{\alpha}\int_0^{t}\left(1+\exp\left({\frac{C_{N+1}}{\alpha}\tau}\right)\right)d\tau\right)}\leq 3\alpha^{1-\varepsilon}\exp{ \left(\frac{\log|\log\alpha|}{4}+|\log\alpha|^{1/4}\right)}\leq 3 \sqrt{\alpha},
  \end{equation*}
  where, again, we have chosen $\alpha$ small enough.
\end{proof}

\section{Proof of Theorem \ref{thm:main} and Theorem \ref{thm:main2} }\label{sec:last}
We finally have all the ingredients to prove our result. 
We first initialize the approximating Leading Order Model \eqref{eq:leading1} with the following data 
\be
\Omega_{0, \mathrm{app}}^{\alpha, \delta}(R, \beta)= \Omega_0^{\alpha, \delta}(R, \beta), \quad \eta_{0, \mathrm{app}}^{\alpha, \delta}(R, \beta)= \bar \eta_0^{\alpha, \delta}(R), \quad \xi_{0, \mathrm{app}}^{\alpha, \delta}(R, \beta)= \de_y \rho_0^{\alpha, \delta}(x, y). 
\ee
We can then use Proposition \ref{prop:expl}, yielding that the approximated (horizontal) density gradient blows up as 
\be\ba
\sup\limits_{t \in [0, T^*(\alpha)]} \|\de_x \rho_{\mathrm{app}}(t) \|_{L^\infty(\R^2)}  &\ge  \|\bar \eta_{0}^{\alpha, \delta}\|_{L^\infty (\R^2)} \left(1+{\log |\log \alpha|}\right)^{\frac{1}{c_2}},
\ea\ee
where $T^*(\alpha) =  C{\alpha}\log |\log (\alpha)| \, \to \, 0$ and $C>0$ independent of $\alpha$.
Now, for $\de_x \rho (t, x, y)$ solving \eqref{eq:2Dbouss-grad} with initial datum $\de_x \rho_0^{\alpha, \delta}$, using Proposition \ref{prop:rem},
\be
\ba
\|\de_x \rho (t) \|_{L^\infty(\R^2)}& \ge  \| \de_x \rho_{\mathrm{app}} (t)\|_{L^\infty} -  \| \de_x \rho_{\mathrm{app}} (t) - \de_x \rho (t)\|_{L^\infty} \gtrsim \| \de_x \rho_{\mathrm{app}} (t)\|_{L^\infty} -  \| \de_x \rho_{\mathrm{app}} (t) - \de_x \rho (t)\|_{\cH^k}\\
& \ge \| \de_x \rho_{\mathrm{app}} (t)\|_{L^\infty} -  \| \eta_r (t)\|_{\cH^k}
 \ge \| \de_x \rho_{\mathrm{app}} (t)\|_{L^\infty} -  F(t) \ge \| \de_x \rho_{\mathrm{app}} (t)\|_{L^\infty} - \sqrt \alpha, 
\ea
\ee
for $t \in [0, T^*(\alpha)]$ and with $F$ as in \eqref{def:functional}.
Taking $0 < \alpha \le \alpha_0 \ll 1$ small enough, this implies that
\be
\|\de_x \rho (t) \|_{L^\infty(\R^2)} \ge \frac 12 \| \de_x \rho_{\mathrm{app}} (t)\|_{L^\infty}, \quad t \in [0, T^*(\alpha)],
\ee
so providing the result and concluding the proof of Theorem \ref{thm:main}.
\subsection{On the proof of Theorem \ref{thm:main2}}
The strategy of the proof and the related computations are exactly analogous: one first rewrites the original Boussinesq equations \eqref{eq:2Dbouss} in the respective version of \eqref{eq:2Dbouss-grad} and then derives the associated leading order model. It is immediate to realize that the equations for $\Ome$ and $\et$ in the Leading Order Model for the original 2D Boussinesq system \eqref{eq:2Dbouss} are exactly the ones of \eqref{eq:leading1}. The only difference would be the equation for $\csi$ in \eqref{eq:leading1}, where the term $\frac{\cL(\Ome)}{2\alpha}$ in the right-hand side should be discarded. Hence, the same growth rates as in Theorem \ref{thm:main} are derived verbatim.

%%%%%
      \section{Strong ill-posedness of 3D incompressible Euler equations with swirl}\label{sec:euler}
Consider the 3D incompressible Euler equations posed on $\R^3$ 
\be\label{eq:3DEuler}
\ba
\de_t \bu + (\bu \cdot \nabla) \bu + \nabla P &=0, \\
\nabla \cdot \bu&=0, 
\ea
\ee
where $\bu=\bu(t, x, y, z)=(u_1(t, x, y, z), u_2(t, x, y, z), u_3(t, x, y, z))^T: [0, \infty) \times \R^3 \to \R^3$ is the velocity field and $P=P(t, x, y, z) : [0, \infty) \times \R^3 \to \R$ is the incompressible pressure. Our approach relies on the analogy of \eqref{eq:3DEuler} and \eqref{eq:2Dbouss} which is well-established provided that the solutions under consideration are supported away from the $z$-axis, see \cite{drivas, jeong2}. To that end, the compact support of the initial data is properly chosen below.
A bootstrap argument will ensure that this property holds on a sufficiently long time scale. The vorticity field 
\be
\omega := \nabla \times \bu,
\ee
satisfies the following equation
\be\label{eq:3Dvorticity}
\de_t \omega + (\bu \cdot \nabla) \omega = (\omega \cdot \nabla) \bu,
\ee
where the right-hand side is referred to as \emph{vortex stretching}.
Passing to cylindrical coordinates 
\be
r:= \sqrt{x^2+y^2}, \qquad \theta=\arctan\left(\frac{y}{x}\right), \qquad z,
%\rho=\sqrt{r^2+z^2}, \qquad 
%\beta:= \arctan (z/r),
\ee
 we consider the special case of the \emph{axisymmetric} 3D Euler equations, which are symmetric with respect to the vertical $z$-axis and therefore the velocity $\bu $ (resp. the vorticity $\omega$) and the pressure $P$ are independent of the angle $\theta$, namely
\be\ba
(u_r^\aa (t, r, z), u_\theta^\aa (t, r, z), u_z^\aa (t, r, z))&=: \bu (t, x, y, z)^T, \\
(\omega_r^\aa (t, r, z), \omega_\theta^\aa (t, r, z), \omega_z^\aa (t, r, z))&=: \omega  (t, x, y, z)^T, \\
P^\aa (t, r, z)&=: P(t, x, y, z).
\ea\ee
Note that the 3D incompressibility condition $\nabla \cdot \bu=0$ in cylindrical coordinates reduces to 
\begin{equation*}
   \de_r u_r^\aa+\de_z u_z^\aa+\frac{1}{r}u_r^\aa=0.
\end{equation*}
Provided that $r>0$, this yields the weighted incompressibility condition $\Div_{r,z}(r(u_r^\aa, u_z^\aa))=0$ and thus
\begin{equation*}
    (u_r^\aa,u_z^\aa)^{T}=-\frac{1}{r}\nabla_{(r,z)}^{\perp}\wt \psi^\aa,
\end{equation*}
for some stream-function $\wt \psi^\aa$ where we recall that $\nabla_{(r,z)}^\perp = (-\de_z, \de_r)^T$. The identity  $\omega_\theta^\aa=\de_zu_r^\aa-\de_ru_z^\aa$ motivates the introduction of 
the \emph{potential vorticity} 
\begin{equation}\label{eq:potential-vorticity}
    \frac{\omega_\theta^\aa}{r}
\end{equation}
such that
\begin{equation}\label{eq:elliptic3D1}
    \frac{\de_{zz} \wt \psi^\aa}{r^2} + \frac 1 r \de_r \left(\frac{\de_r \wt \psi^\aa}{r} \right)=- \left(\frac{\omega_\theta^\aa}{r}\right).
\end{equation}
The potential vorticity then satisfies the transport equation with source term below
\begin{equation*}
  \de_t \left(\frac{\omega_\theta^\aa}{r}\right) + u_r^\aa \de_r \left(\frac{\omega_\theta^\aa}{r}\right)  + u_z^\aa \de_z \left(\frac{\omega_\theta^\aa}{r}\right)  = - \frac{1}{r^4} \de_z (r u_\theta^\aa)^2,  
\end{equation*}
where the right-hand side accounts for the \emph{swirl}. The velocity field $(u_r^\aa,u_z^\aa)$ is uniquely determined by the potential vorticity $r^{-1}\omega_{\theta}^\aa$ through the elliptic problem \eqref{eq:elliptic3D1}, enforcing that $\wt \psi^\aa$ vanishes quadratically on $r=0$ (see \cite[Section 2]{tarek1} for details). The swirl component $u_\theta^\aa$ of the velocity field satisfies
\begin{equation}\label{eq: trans swirl}
    \de_t (r u_\theta^\aa) +  u_r^\aa \de_r (r u_\theta^\aa) + u_z^\aa \de_z (r u_\theta^\aa)=0,
\end{equation}
and furthermore exploiting the regularity properties of the solutions \cite{kato72, constantin, bertozzi} under consideration gives 
\begin{align}\label{eq:utheta2}
        \de_t (r u_\theta^\aa)^2 +  u_r^\aa \de_r (r u_\theta^\aa)^2 + u_z^\aa \de_z (r u_\theta^\aa)^2=0.
\end{align}

The analogy between the 3D axisymmetric Euler equations with \emph{swirl} and the 2D Boussinesq equations becomes evident at this stage: the couple potential vorticity $r^{-1}\omega_\theta^\aa$ and $(ru_\theta^\aa)^2$ plays the role of vorticity $\omega$ and buoyancy $\rho$ in the Boussinesq equations \eqref{eq:2Dbouss} respectively. Following this analogy, we derive the respective version of \eqref{eq:2Dbouss-grad} for the 3D axisymmetric Euler equations with \emph{swirl} by applying the gradient $\nabla_{(r,z)}$ to \eqref{eq:utheta2}. We are led to 
\be\label{eq:3Daxisymm}\ba
\de_t \left(\frac{\omega_\theta^\aa}{r}\right) + u_r^\aa \de_r \left(\frac{\omega_\theta^\aa}{r}\right)  + u_z^\aa \de_z \left(\frac{\omega_\theta^\aa}{r}\right)  &= - \frac{1}{r^4} \de_z (r u_\theta^\aa)^2,\\
\de_t \de_r (r u_\theta^\aa)^2 +  u_r^\aa \de_{rr} (r u_\theta^\aa)^2 + u_z^\aa \de_{zr} (r u_\theta^\aa)^2&=- (\de_r u_r^\aa )\de_r  (r u_\theta^\aa)^2 - (\de_r u_z^\aa) \de_{z} (r u_\theta^\aa)^2 ,\\
 \de_t\de_z(r u_\theta^\aa)^2 +  u_r^\aa \de_{rz} (r u_\theta^\aa)^2 + u_z^\aa \de_{zz} (r u_\theta^\aa)^2&=-(\de_zu_r^\aa)\de_r(ru_\theta^\aa)^2-(\de_zu_z^\aa) \de_z(ru_\theta^\aa)^2,
% \de_t  (r \de_\beta u_\beta^\aa) +  u_r^\aa \de_{r} (r \de_\beta u_\beta^\aa) + u_3^\aa \de_{z} (r \de_\beta u_\beta^\aa)&=- (\de_\beta u_r^\aa )\de_r  (r u_\beta^\aa) - (\de_\beta u_3^\aa) \de_{z} (r u_\beta^\aa),
\ea\ee
together with the elliptic equation 
\be
-{\de_{zz}  \psi^\aa}-{\de_{rr}  \psi^\aa}  - \frac 1 r {\de_r  \psi^\aa} + \frac{\psi^\aa}{r^2}={\omega_\theta^\aa},  \label{eq:elliptic3D}
\ee
where we set $\psi^\aa=r^{-1}\wt\psi^\aa$ and where we impose  an odd symmetry in $z$ with conditions
\be\label{eq:3Daxiboundarycond}
\psi^\aa (r, 0)= \psi^\aa (0, z)=0.
\ee
%%%%
Now, since the support of our initial data will be far from the vertical axis $r=0$, we set 
\begin{equation}\label{eq:changeofvariablesalpha}
    r= 1+\zeta \qquad \Rightarrow \, \de_r \, \to \, \de_\zeta.
\end{equation}
By the change of coordinates in the $(r,z)$- plane to polar coordinates $(\rho, \beta)$ centered in $(r_0,z_0)=(1,0)$ with 
\begin{equation*}
    \rho=\sqrt{(r-1)^2+z^2}=\sqrt{\zeta^2+z^2}, \quad \beta=\arctan\left(\frac{z}{\zeta}\right),
\end{equation*}
the radial scaling $R=\rho^\alpha$ for $0 < \alpha \ll 1$ in \eqref{eq:scaling}, together with the new variables defined as in \eqref{eq:new-var} and \eqref{eq:psi-polar} by
\be\label{eq:psibig}
\Omega (\cdot, R, \beta):= {\omega^\aa_\theta (\cdot, r , z)}, \qquad \Psi (\cdot, R, \beta):= \rho^{-2} \psi^\aa (\cdot, r, z), 
\ee
one finally has that
\be\label{eq:der-3D}
\de_r \; \Rightarrow \; R^{-\frac{1}{\alpha}}\left(\alpha\cos \beta R \de_R - \sin \beta \de_\beta \right), \qquad \de_z  \; \Rightarrow \;   R^{-\frac{1}{\alpha}}\left(\alpha\sin \beta R \de_R +\cos \beta \de_\beta\right)
\ee
which we note to be similar to \eqref{eq:spatial-der}.
This change of coordinates yields
\be\label{eq:uru3}\ba
u_r^\aa &=\de_z\psi^\aa= \rho (2 (\sin \beta) \Psi + (\cos \beta) \de_\beta \Psi + \alpha (\sin \beta) R \de_R \Psi), \\
u_z^\aa&=-\de_r\psi^\aa-\frac{1}{1+\zeta}\psi^\aa= \rho \left(- \frac{1}{\cos \beta} \Psi - 2 (\cos \beta) \Psi + (\sin \beta) \de_\beta \Psi - \alpha (\cos \beta) R \de_R \Psi\right),
\ea\ee
which is analogous to \eqref{eq:u1}-\eqref{eq:u2} (up to a switching of signs).
The elliptic equation \eqref{eq:elliptic3D} reads
\begin{align}
-\alpha^2 R^2 \de_{RR} \Psi - \alpha (\alpha + 5) R \de_R \Psi - \de_{\beta \beta} \Psi + \de_\beta ((\tan \beta) \Psi)-6 \Psi&=\Omega, \label{eq:elliptic3Dnew}
\end{align}
with 
\be
\Psi (R, 0)= \Psi \left(R, \frac \pi 2\right)=0.
\ee
We refer for instance to \cite[Section 2]{tarek1} and \cite[Section 1.2]{jeong} and references therein for further details on the derivation and properties of \eqref{eq:elliptic3Dnew}.
As in \cite{chen1}, we will rely on elliptic estimates in the weighted space
\begin{align}\label{def:weights3D}
    \|f\|_{\cH^k(\rho_i)}:= \sum_{i=0}^k \|R^i\de_R^i f \rho_1^{1/2}\|_{L^2}+ \|R^i\de_R^i \de_\beta^{k-i} f \rho_2^{1/2}\|_{L^2}, \; \rho_i= \frac{(1+R)^4}{R^4} \sin (2\beta)^{-\sigma_i}, \; \sigma_1=\frac{99}{100}, \; \sigma_2=1+\frac{\alpha}{10}.
\end{align}
Throughout this section, we use the notation $\cH^k:=\cH^k(\rho_i)$ and shall prove the following. 
%%%%%

\begin{Thm}[Strong ill-posedness of the 3D axisymmetric Euler equations with swirl]\label{thm:main3sec3}
There exists $0<\alpha_0 \ll 1$ such that for any $0<\alpha \le \alpha_0 $ and any $\delta>0$, there exist initial data 
\be\label{eq:initialdata3D}
\omega^\aa_{\theta,0}(r, z)=-\Omega_0^{\alpha, \delta}(R, \beta), \qquad (r u_{\theta,0}^\aa)^2(r,z)=U_0^{\alpha, \delta}(R, \beta)
\ee
with 
\be\ba
\left\|({\omega^\aa_{\theta,0}}, \nabla_{r, z}(r u_{\theta,0}^\aa)^2)\right\|_{L^{\infty}(\R^2)} &=  \delta, 
\ea \ee
so that one of the following holds:
\begin{enumerate}[(i)]
\item if the initial datum takes the form 
\begin{align}\label{eq:3Deuler-initialU}
    \Omega_0^{\alpha, \delta}(R, \beta)=  \bar g_0^{\alpha, \delta} (R) \sin (2\beta)\cos \beta, \qquad U_0^{\alpha, \delta}(R, \beta)= R^{1/\alpha} \bar \eta_0^{\alpha, \delta} (R) \cos \beta,%\varphi^{\delta}(R-1/8), 
\end{align}
where $\bar g_0^{\alpha, \delta} (R), \bar \eta_0^{\alpha, \delta} (R) \in C_c^\infty([1/10, \infty))$, with $\bar g_0^{\alpha, \delta} (R)\ge 0$, then the  solution $\omega_\theta^\aa (t,r,z), u_\theta^\aa (t, r, z)$ to the Cauchy problem associated with the 3D axi\-symmetric Euler equations with swirl \eqref{eq:3Daxisymm}-\eqref{eq:elliptic3D} satisfies
%%%%
\be\label{eq:crucial-inequality}\ba
\sup\limits_{t \in [0, T^*(\alpha)]} \left\|\left(\frac{(u_\theta^\aa)^2}{r}+\frac12\de_r (u_\theta^\aa)^2\right) (t) \right\|_{L^\infty}  &\ge \frac 12  \|\de_r(u_{\theta,0}^\aa)^2\|_{L^\infty} \left(1+{\log |\log \alpha|}\right)^{\frac{1}{c_2}};
\ea\ee
\item if the initial datum takes the form 
\begin{align}
    \Omega_0^{\alpha, \delta}(R, \beta)=  \bar g_0^{\alpha, \delta} (R) \sin (2\beta)\cos \beta, \qquad U_0^{\alpha, \delta}(R, \beta)=R^{1/\alpha} \bar \eta_0^{\alpha, \delta} (R) \sin \beta,%\varphi^{\delta}(R-1/8), 
\end{align}
where $\bar g_0^{\alpha, \delta} (R), \bar \eta_0^{\alpha, \delta} (R) \in C_c^\infty([1/10, \infty))$, with $\bar g_0^{\alpha, \delta} (R)\le 0$,
then 
\be\label{eq:omegablow-upthm}
\ba
\sup\limits_{t \in [0, T^*(\alpha)]} \|\omega_\theta (t) \|_{L^\infty } &\ge \frac 12 \|\omega_{\theta, 0}^\aa\|_{L^\infty} \left(1+{\log |\log \alpha|}\right)^{\frac{1}{c_2}},\\
\sup\limits_{t \in [0, T^*(\alpha)]} \|\de_z(u_\theta^\aa)^2 (t) \|_{L^\infty } &\ge \frac 12  \|\de_z (u_{\theta, 0}^\aa)^2 \|_{L^\infty} \left(1+{\log |\log \alpha|}\right)^{\frac{1}{c_2}},
\ea\ee
\end{enumerate}
where 
\be\label{def:talpha-star}
T^*(\alpha)=C {\alpha}\log |\log (\alpha)|,\ee
and $C, c_2>0$ are independent of $\alpha$.
\end{Thm}
\begin{Rmk}[Comparison with the results of Elgindi \& Masmoudi \cite{tarek3} and  Bourgain \& Li \cite{bourgain2015}]\label{rmk:comparison}
The first results of ill-posedness of the 3D incompressible Euler equations in $L^\infty$ based spaces are due to Elgindi \& Masmoudi \cite{tarek3} and  Bourgain \& Li \cite{bourgain2015}. The mechanism of ill-posedness of Theorem \ref{thm:main3sec3} is completely different from \cite{bourgain2015}, and in particular from \cite[Theorem 1.6]{bourgain2015}, which is about the 3D axisymmetric Euler equations without swirl (see also \cite{BL21} for the strong ill-posedness of the 3D axisymmetric Euler equations without swirl in various critical spaces ). Notice indeed that in the first point of our Theorem \ref{thm:main3sec3}, the $L^{ \infty}$-norm inflation of the full vorticity field when $\bar g_0^{\alpha, \delta}\ge 0$ is a consequence of the norm inflation of the gradient of the swirl $u_\theta^\aa$, which is exactly zero in \cite{bourgain2015}. Hence, it is more interesting to compare Theorem \ref{thm:main3sec3} with \cite[Proposition 10.1]{tarek3}, where instead of considering the 3D axisymmetric Euler equations with velocity field depending only on $(r=\sqrt{x^2+y^2}, z)$ (and symmetric with respect to the vertical axis), the authors consider the situation where the velocity field depends only on the plane $(x, y)$. Though the framework is different, they provide an example of a datum for which the third component of the velocity field exhibits a norm inflation, while the horizontal vorticity (which may be compared to our potential vorticity) remains bounded.
\end{Rmk}
The proof of Theorem \ref{thm:main3sec3} follows. While the method of the proof is very similar to the one of Theorem \ref{thm:main}, the main difference consists in an argument ensuring the validity of the analogy of the Boussinesq \eqref{eq:2Dbouss-grad} and axisymmetric Euler \eqref{eq:3Daxisymm}-\eqref{eq:elliptic3D} for solutions supported away from the symmetry axis. We outline the proof of Theorem \ref{thm:main3sec3},  highlighting the important differences only.
\begin{enumerate}
    \item Derivation of the Leading Order Model \eqref{eq:LOM3D1} and growth rates for a suitable class of initial data supported away from $z=0$ on $[0,T^*(\alpha)]$.
   % \item Estimates on the growth rates of the solutions \eqref{eq:LOM3D1} in 
    \item Localized (in space) elliptic estimates on $[0,T']$, where $T'$ is the supremum over all times for which the support of the solution remains bounded away from $r=0$, see \eqref{eq:T'}.
    \item Remainder estimates in $[0,T']$, especially for the contribution of the swirl in the first equation of \eqref{eq:3Daxisymm}.
    \item A bootstrap argument yielding that $T'\geq T^*(\alpha)$.
    \item Conclusion of the proof of Theorem \ref{thm:main3sec3}.
\end{enumerate}
Each step is presented in a dedicated sub-section. 
\subsection{Derivation of the Leading Order model (LOM)}
As in Section \ref{sec:derivation} for the Bousinesq equations, the first step towards the LOM consists in a suitable expansion of the stream-function $\Psi$. We recall the 3D version of Elgindi's decomposition \cite{tarek1} of the Biot-Savart law.
\begin{Thm}[{\cite[Proposition 7.1]{tarek1}}, \cite{drivas}]\label{thm:tarek3D}
Given $\Omega=\Omega (R, \beta) \in H^k$ with $\Omega (R, 0)=\Omega (R, \frac \pi 2)=0$,
there is a unique solution to
\begin{equation}\label{eq:ellittica3D}
-\alpha^2 R^2 \de_{RR} \Psi - \alpha (\alpha + 5) R \de_R \Psi - \de_{\beta \beta} \Psi + \de_\beta ((\tan \beta) \Psi)-6 \Psi= \Omega,
\end{equation}
with boundary conditions $\Psi(R, 0)=\Psi(R, \frac \pi 2)=0$. It is given by
\be\label{eq:psi-main3D}
\Psi=\Psi(\Omega)(R, \beta) =\Psa + \hE, 
\ee 
where
\be \label{eq:psi23D}
\Psa=\Psa( \Omega)(R, \beta):=  \Ps( \Omega) + \mathcal{R}^\alpha (\Omega)
\ee
%satisfies the equation 
%\begin{align}\label{eq:ell23D}
   % -\alpha^2 R^2 \de_{RR} \Psi_2 - \alpha (\alpha + 5) R \de_R \Psi_2 = \frac{\sin 2\beta }{\pi} \int_0^{\pi/2} \Omega (R, \beta) 
   % \sin (2\beta)   \cos \beta  \, d\beta,
%\end{align}
and 
\be \label{eq:psiapp3D}
\Ps=\Ps( \Omega)(R, \beta):=  \frac{\cL_{12} ( \Omega)(R)}{4 \alpha}\sin (2\beta),\quad \cL_{12} ( \Omega)(R):= \frac{3}{8 \pi} \int_R^\infty \int_0^{2\pi} \frac{ \Omega(s, \beta) \sin (2\beta)\cos \beta}{s}\, d\beta \, d s,
\ee
satisfies the equation
\be\label{eq:L0}
6\Ps-\de_{\beta \beta }\Ps+ \de_\beta ((\tan \beta) \Ps) =0
\ee
and the error term 
$\mathcal{R}^\alpha( \Omega)$ satisfies the inequality
\begin{align}\label{eq:hardy3D}
\|\mathcal{R}^\alpha( \Omega)\|_{H^k} \le M_k \| \Omega\|_{H^k}.
\end{align}
Moreover, $\Ps (\Omega), \Psi_2(\Omega), \hE(\Omega)$ satisfy the following elliptic estimates
\be\ba
\alpha \left\| \de_\beta \left(\frac{\Ps}{\cos \beta}\right) \right\|_{L^2}+ \alpha \|\de_{\beta \beta} \Ps\|_{L^2}+\alpha^2 \|R^2 \de_{RR}\Ps\|_{L^2} & \le M_k  \|  \Omega\|_{L^2},\\
\alpha \left\| \de_\beta \left(\frac{\Psi_2}{\cos \beta}\right) \right\|_{L^2}+ \alpha \|\de_{\beta \beta} \Psi_2\|_{L^2}+\alpha^2 \|R^2 \de_{RR}\Psi_2\|_{L^2} & \le M_k  \| \Omega\|_{L^2},\\
\left\| \de_\beta \left(\frac{\hE}{\cos \beta}\right) \right\|_{L^2}+ \|\de_{\beta \beta} \hE\|_{L^2}+\alpha^2 \|R^2 \de_{RR}\hE\|_{L^2} & \le M_k \|  \Omega\|_{L^2}.
\ea\ee
The constant $M_k>0$ is independent of $\alpha$.
\end{Thm}
\begin{Rmk}
Note, as remarked in \cite[Remark 7.2]{tarek1}, that the mixed derivative $\alpha \|R\de_{R\beta} \Psi_2\| \le M_k \|\Omega\|_{L^2}$ is obtained by interpolation (the same applies to $\Ps$ and $\hE$), upon changing the constant $M_k$.
\end{Rmk}
Let us derive our Leading Order Model arguing as in Section \ref{sec:derivation}. From \eqref{eq:der-3D}-\eqref{eq:uru3}, as previously done to derive \eqref{eq:approx-der-vel}, we obtain that 
\be\label{eq:3Dexpansions}\ba
\de_r (r u_\theta^\aa)^2 & = - \frac{\sin \beta}{R^\frac 1 \alpha} \de_\beta  (r u^\aa_\theta){^2} + \text{l.o.t}, \qquad \de_z (r u_\theta^\aa)^2  =  \frac{\cos \beta}{R^\frac 1 \alpha} \de_\beta (r u^\aa_\theta){^2} + \text{l.o.t}, \\
u_r^\aa&= {R^\frac 1 \alpha} \frac{\cL_{12}( \Omega)}{2\alpha} \cos \beta + \text{l.o.t},\qquad 
u_z^\aa= - {R^\frac 1 \alpha} \frac{\cL_{12}( \Omega)}{\alpha} \sin \beta + \text{l.o.t}, \\
\de_z u_r^\aa&=\text{l.o.t},\qquad 
\de_z u_z^\aa= - \frac{\cL_{12}( \Omega)}{\alpha}  + \text{l.o.t}, \\
\de_r u_r^\aa&=  \frac{\cL_{12}( \Omega)}{2\alpha}+ \text{l.o.t},\qquad 
\de_r u_z^\aa=  \text{l.o.t},
\ea\ee
and the transport term
\be\label{eq:transport3D}
u_r^\aa \de_r + u_z^\aa \de_z = - \frac{3 \cL_{12}( \Omega)}{2\alpha} \sin (2\beta) \de_\beta + \text{l.o.t.}= - 6 \Ps \de_\beta + \text{l.o.t.}
\ee
Plugging now all the expansions \eqref{eq:3Dexpansions} into \eqref{eq:3Daxisymm}, keeping only the main order terms and introducing the notation
\be
\eta (\cdot, R, \beta) = \de_r (r u_\theta^\aa)^2, \qquad \xi(\cdot, R, \beta) = \de_z (r u_\theta^\aa)^2,
\ee
% auxiliary for us
% \begin{align*}
%     \de_t \Omega+\left(u_r^\aa \de_r + u_z^\aa \de_z\right)\Omega&=-\frac{2}{r^4}(ru_\theta)\xi\\
%     \de_t \eta+\left(u_r^\aa \de_r + u_z^\aa \de_z\right)\eta&=-(\de_ru_r^\aa)\eta-\de_ru_z^\aa \xi\\
%     \de_t \xi+\left(u_r^\aa \de_r + u_z^\aa \de_z\right)\xi&=-(\de_zu_r^\aa)\eta-\de_zu_z^\aa \xi\\
% \end{align*}
we derive the following Leading Order Model
\be\label{eq:LOM3D2}\ba
\de_t \wt \Omega_{\mathrm{app}} - 6 \Ps ( \wt \Omega_{\mathrm{app}}) \de_\beta { \wt \Omega_{\mathrm{app}}}&= \frac{\cL_{12}( \wt \Omega_{\mathrm{app}})}{2\alpha}  \wt \Omega_{\mathrm{app}} - {\frac{\csi }{ (1+\zeta)^3}}, \\
\de_t \et - 6 \Ps ( \wt \Omega_{\mathrm{app}}) \de_\beta \et &= - \frac{\cL_{12}( \wt \Omega_{\mathrm{app}})}{2\alpha} \et, \\
\de_t \csi - 6 \Ps ( \wt \Omega_{\mathrm{app}}) \de_\beta \csi & = \frac{\cL_{12}( \wt \Omega_{\mathrm{app}})}{\alpha} \csi.
\ea\ee
%to be compared with \eqref{LOM} where the only difference is the absence of the terms stemming from the linearization around the stratified equilibrium.
\begin{comment}
\textcolor{red}{$\csi$ dovrebbe essere nella RHS dell'eq. per $\Ome$?.} Now we change $\Ome \to - \Ome $, which yields 
\be\label{eq:LOM3D}\tag{3DLOM}\ba
\de_t \Ome + 3 \Ps (\Ome) \de_\beta \Ome &= \frac{1}{\sqrt R}  (\cos \beta)\left(\int_0^R \et (\cdot, s, \beta) \, ds \right) \et, \\
\de_t \et + 3 \Ps (\Ome) \de_\beta \et &= \frac{\cL_{12}(\Ome)}{2\alpha} \et, \\
\de_t \csi + 3 \Ps (\Ome) \de_\beta \csi & = -\frac{\cL_{12}(\Ome)}{2\alpha} (\sin \beta) \csi - \frac{\cL_{12}(\Ome)}{\alpha} \frac{(\cos^2 \beta)}{R^\frac 1\alpha} \et.
\ea\ee
\end{comment}
We will prove in the following that the second addend in the right-hand side of the equation for $\wt \Omega_\text{app}$ in \eqref{eq:LOM3D2} is negligible at main order, so that, changing $ \wt \Omega_{\mathrm{app}} \, \to \, -\Ome$, \eqref{eq:LOM3D2} rewrites as 
\be\label{eq:LOM3D1}\tag{LOM3D}\ba
\de_t {\Ome} + 6 \Ps (\Ome) \de_\beta {\Ome}&= - \frac{\cL_{12}(\Ome)}{2\alpha} \Ome, \\
\de_t \et + 6 \Ps (\Ome) \de_\beta \et &=  \frac{\cL_{12}(\Ome)}{2\alpha} \et, \\
\de_t \csi + 6 \Ps (\Ome) \de_\beta \csi & = - \frac{\cL_{12}(\Ome)}{\alpha} \csi.
\ea\ee
%
\begin{comment}
Note that if $\Ome$ solves the first equation of \eqref{eq:LOM3D1}, the potential vorticity \be
\frac{\Ome}{r}=\frac{\Ome}{1+\zeta}=\frac{\Ome}{R^\frac 1\alpha \cos \beta}\ee
satisfies the following equation
%
\be
\de_t \left(\frac{\Ome}{1+\zeta}\right) +6 \Ps (\Ome) \de_\beta  \left(\frac{\Ome}{1+\zeta}\right)=\frac{\cL_{12}(\Ome)}{\alpha}(2\sin^2\beta - \cos^2\beta)\left(\frac{\Ome}{1+\zeta}\right).
\ee
\end{comment}
%
Let us solve \eqref{eq:LOM3D1} for a class of initial data.
Similarly to the 2D Boussinesq equations, we choose 
\be\label{eq:ID-OA-3D}
{\Ome (0, \cdot)} = \bar g_0^{\alpha, \delta} (R) \sin (2\beta) \cos \beta,
\ee
where 
\be
\bar g_0^{\alpha, \delta}(R)\ge 0; \qquad  
\et (0, \cdot)=\bar \eta_0^{\alpha, \delta}(R)
\ee
are radial functions with compact support. As in Section \ref{sec:initialdata}, the initial datum for the full system \eqref{eq:3Daxisymm} is the following :
\be\label{eq:ID-u-3D}
(r u_\theta^\aa)^2 (0, \cdot)= R^\frac 1 \alpha \bar \eta_0^{\alpha, \delta}(R)\cos \beta,
\ee
where for example $\bar \eta_0^{\alpha, \delta}(R)$ is similar to the one given in Section \ref{sec:initialdata}.
We introduce, following \cite{chen1}, the \emph{support size} %$\supp (\Ome, \et, \csi)(t, \cdot)$
of
$\supp\{(\omega_\theta^\aa,u_\theta^\aa)(t, \cdot)\}$ 
for $t \in [0, T^*(\alpha)]$ denoted by 
\begin{align}\label{eq:supp-eta}
    \cS(t):= \text{ess inf} \{\tilde{\rho} \, : \, \omega_\theta^\aa(t, r, z)=(ru_\theta^\aa(t, r, z))^2=0 \quad  \text{for} \quad (r-1)^2+z^2\ge \tilde{\rho}^2\}, 
    \end{align}
    where we recall that $\rho=0$ for $(r,z)=(1,0)$.
Note that as $R=\rho^\alpha$, taking $\bar\eta_0^{\alpha,\delta}=\eta_0^{\alpha, \delta}(R-1/8)$ where $\eta_0^{\alpha, \delta}(\cdot) \in C_c^\infty $ is supported in $(1/30, 1/60)$, and similarly for $\bar g_0^{\alpha, \delta}(R)$, then for $\eps_0=1/60$, 
\be\label{eq:S0}
   (1/8+\eps_0)^\frac 1\alpha  \le  \cS(0) \le (1/8+2\eps_0)^\frac 1 \alpha. 
\ee
A central step of the proof of Theorem \ref{thm:main3sec3} will be to control the support size $\cS(t)$ up to the time for which $L^{\infty}$-norm inflation of  $\Omega (t, \cdot), \eta (t, \cdot), \xi (t, \cdot)$ occurs, see Section \ref{sec:supp-size}. For our choice of the initial data, see \eqref{eq:3Deuler-initialU} (Case (i) of Theorem \ref{thm:main3sec3}) where $(r u^\aa_{\theta, 0})^2=R^{1/\alpha} \bar \eta_0^{\alpha, \delta} (R) \cos \beta$, taking $\et (0, \cdot)=\bar \eta_0^{\alpha, \delta}$,
\be
\| \eta_r (0, \cdot)\|_{\cH^N}= \|\de_r (r u_\theta^\aa)^2 (0, \cdot)- \et (0, \cdot)\|_{\cH^N} = \alpha C_{N+1},
\ee
where $C_N=C_N(\|\bar g_0^{\alpha, \delta}\|_{\cH^N}, \|\bar \eta_0^{\alpha, \delta}\|_{\cH^N})$.
About $\csi(0, \cdot)$, we can choose $\csi(0, \cdot)=0$, yielding
\begin{align}
    \| \xi_r (0, \cdot)\|_{\cH^N}= \|\de_z (r u_\theta^\aa)^2 (0, \cdot)- \csi (0, \cdot)\|_{\cH^N}=\alpha C_{N+1}.
\end{align}
With the notation $g(t):=\Ome$, one has 
from \eqref{eq:LOM3D1} that
\be
\de_t g + \frac{3}{\alpha } \gamma \cL_{12}(g) \de_\gamma g=-\frac{\cL_{12}(g)}{2\alpha }   g,\qquad \gamma:=\tan \beta.
\ee
Then, setting 
\begin{equation}
    g(t)=: \gamma^{-\frac{1}{6}} \wt g (t),
\end{equation}
one has that $\wt g$ satisfies
\be\label{eq:omega-formula3D}
\de_t \wt g + \frac{3}{\alpha} \gamma \cL_{12}(\gamma^{-\frac 16} \wt g) \de_\gamma \wt g=0.
\ee
Now, the flow map is 
\begin{align}\label{eq:tildeg}
\phi_{\gamma} (t)= \gamma\exp\left({\frac{3}{\alpha} \int_0^t \cL_{12} (\wt g(\tau) \gamma^{-1/6}) \, d\tau}\right) \, \Rightarrow \; (\phi_{\gamma} (t))^{-1}= (\tan \beta)\exp\left({-\frac{3}{\alpha} \int_0^t \cL_{12} (\wt g(\tau) \gamma^{-1/6}) \, d\tau}\right), 
\end{align}
so that the solution to \eqref{eq:omega-formula3D} with initial datum  $\wt g(0)=\bar g_0^{\alpha, \delta}(R) \sin (2\beta)\cos \beta (\tan \beta)^{\frac 16}$ such that $$\Omega_{0, \mathrm{app}}=g(0)=\bar g_0^{\alpha, \delta}(R) \sin (2\beta) \cos \beta,$$
reads
\begin{align}
    \wt g(t)=\bar g_0^{\alpha, \delta}(R) \frac{2\gamma^\frac 76}{(1+\gamma^2)^\frac 32}= \bar g_0^{\alpha, \delta}(R)\frac{2 (\tan \beta)^\frac 76 \left(\exp\left({-\frac{3}{\alpha} \int_0^t \cL_{12} (\gamma^{- 1/6} \wt g(\tau)) \, d\tau}\right)\right)^\frac 76}{\left(1+(\tan \beta)^2\exp\left({-\frac{6}{\alpha} \int_0^t \cL_{12} (\gamma^{- 1/6} \wt g(\tau)) \, d\tau}\right)\right)^\frac 32}.
\end{align}
We then deduce that
\begin{align}\label{eq: g explicit}
    g(t) & = (\tan \beta)^{-1/6}  \wt g(t) =\bar g_0^{\alpha, \delta}(R)\frac{2 (\tan \beta) \left(\exp\left({-\frac{7}{2\alpha} \int_0^t \cL_{12} (g(\tau)) \, d\tau}\right)\right)}{\left(1+(\tan \beta)^2\exp\left({-\frac{6}{\alpha} \int_0^t \cL_{12} ( g(\tau)) \, d\tau}\right)\right)^\frac 32}.
\end{align}
This expression of $g(t)$ is completely analogous to \eqref{eq:g-formula}, and the respective upper and lower bounds can be obtained as the ones of Lemma \ref{prop:LOM} . Once again, we observe that the sign of $\cL_{12}(g(t))$ is determined by the initial datum $\bar g_0^{\alpha, \delta}(R)$. In particular $\|g(t)\|_{L^\infty} \lesssim \delta$ if $\bar g_0^{\alpha, \delta}(R)\ge 0$ (Case (i) of Theorem \ref{thm:main3sec3}), while it can display strong norm inflation if $\bar g_0^{\alpha, \delta}(R)\le 0$ (Case (ii) of Theorem \ref{thm:main3sec3}).
As done in Section \ref{sec:derivation}, we can now solve \eqref{eq:LOM3D1} for $\et$, yielding
\be\label{eq:eta-sol3D}\et(t, R, \beta)= \bar \eta_0^{\alpha, \delta} (R) \exp{\left(  \frac{3}{\alpha} \int_0^t {\cL_{12}\left({\Ome (\tau)}\right)}\, d\tau\right)}=\bar \eta_0^{\alpha, \delta} (R) \exp{\left(  \frac{3}{\alpha} \int_0^t {\cL_{12}\left({ g (\tau)}\right)}\, d\tau\right)}.\ee
We adapt Lemma \ref{prop:LOM} and apply it to see that, choosing $\bar g_0^{\alpha, \delta}(R)\ge 0$, the inequalities \eqref{eq:Lg-bounds}-\eqref{eq:g-upperandlower} hold, so that
\begin{equation}\label{eq:bound L12}
\frac{1}{\alpha} \int_0^t \cL_{12}(g (\tau)) \, d\tau \ge \frac{1}{ c_2} \log \left( 1+ \frac{c_2  t}{2\alpha} \int_R^\infty \frac{ \bar g_0^{\alpha, \delta}(s)}{s} \, ds  \right),
\end{equation}
for some constant $c_2>0$, independent of $\alpha$.
 %Note that the second term in the expression of $\Ome$ in \eqref{eq:omega-formula3D} is exactly the solution of the fundamental model in \cite[page 669]{tarek1}. 
%Finally, the explicit formula for $\csi (t, R, \beta)$ is exactly given by \eqref{eq:csi-explicit}.
We thus obtain the analogous of Proposition \ref{prop:expl} for \eqref{eq:LOM3D1}.
\begin{Prop}\label{prop:expl3D}
For any $\delta>0$, there exists $0<\alpha_0 \ll 1$ such that, for any $0<\alpha \le \alpha_0 $, there exist initial data $(\Omega_{0, \mathrm{app}}^{\alpha, \delta}(R, \beta), \xi_{0, \mathrm{app}}^{\alpha, \delta}(R, \beta), \eta_{0, \mathrm{app}}^{\alpha, \delta}(R, \beta))$ with
$$\|(\Omega_{0, \mathrm{app}}^{\alpha, \delta}, \eta_{0, \mathrm{app}}^{\alpha, \delta}, \xi_{0, \mathrm{app}}^{\alpha, \delta})\|_{L^\infty(\R^2)} =  \delta,$$ %\quad \|(\Omega_{0, \mathrm{app}}^{\alpha, \delta}, \eta_{0, \mathrm{app}}^{\alpha, \delta}, \xi_{0, \mathrm{app}}^{\alpha, \delta})\|_{\cH^{N+1}} =  C_{N+1},$$
such that either (i) or (ii) of the following are satisfied
\begin{enumerate}
\item[(i)] if the initial datum takes the form  \begin{align}
 \Omega_{0,\mathrm{app}}^{\alpha, \delta} (R, \beta) = \bar g_0^{\alpha, \delta} (R) \sin (2\beta)\cos \beta, \quad \eta_{0, \mathrm{app}}^{\alpha, \delta} (R, \beta)= \bar \eta_0^{\alpha, \delta}(R), \quad  \xi^{\alpha, \delta}_{0, \mathrm{app}}(R, \beta)=0, %\frac{\alpha}{2} R  (\de_R\bar \eta_0^{\alpha, \delta}(R)) \sin (2\beta),
\end{align}
where $\bar g_{0}^{\alpha, \delta} (R), \bar \eta_{0}^{\alpha, \delta} (R)  \in C_c^\infty([1/10, \infty))$
and $\bar g_0^{\alpha, \delta} (R)\ge 0$, then the solution $(\Ome^{\alpha, \delta}, \eta_\mathrm{app}^{\alpha, \delta}, \xi_\mathrm{app}^{\alpha, \delta})$ to the Cauchy problem associated with \eqref{eq:LOM3D1} satisfies
\be\ba
\|\et^{\alpha, \delta}(t) \|_{L^\infty(\R^2)}  &\ge \|\eta_{0, \mathrm{app}}^{\alpha, \delta}\|_{L^\infty (\R^2)} \left(1+\frac{c_2 t}{2\alpha }C_0\right)^{\frac{1}{c_2}},
\ea\ee
where $C_0=\sup\limits_{R \in \mathrm{supp}(\bar g_0^{\alpha, \delta}(R))} \int_R^\infty \frac{\bar g_0^{\alpha, \delta}(s)}{s} \, ds$ and $c_2>0$ is independent of $\alpha$. In particular, this yields that 
\be\label{eq:etablow-up}\ba
\sup\limits_{t \in [0, T^*(\alpha)]} \|\et^{\alpha, \delta}(t) \|_{L^\infty}  &\ge  \|\eta_{0, \mathrm{app}}^{\alpha, \delta}\|_{L^\infty} \left(1+{\log |\log \alpha|}\right)^{\frac{1}{c_2}};
\ea\ee
\item[(ii)] if the initial datum has the following form \begin{align}
 \Omega_{0,\mathrm{app}}^{\alpha, \delta} (R, \beta) = \bar g_0^{\alpha, \delta} (R) \sin (2\beta)\sin \beta, \quad \eta_{0, \mathrm{app}}^{\alpha, \delta} (R, \beta)= 0, \quad  \xi^{\alpha, \delta}_{0, \mathrm{app}}(R, \beta)=\bar \eta_0^{\alpha, \delta}(R),
\end{align}
where $\bar g_{0}^{\alpha, \delta} (R), \bar \eta_{0}^{\alpha, \delta} (R) \in C_c^\infty([1/10, \infty))$
and 
$\bar g_0^{\alpha, \delta} (R)\le 0$, then 
\be\label{eq:omegablow-up}\ba
\sup\limits_{t \in [0, T^*(\alpha)]} \|\Ome^{\alpha, \delta}(t) \|_{L^\infty}  &\ge  \|\Omega_{0, \mathrm{app}}^{\alpha, \delta}\|_{L^\infty} \left(1+{\log |\log \alpha|}\right)^{\frac{1}{c_2}},
\ea\ee
\end{enumerate}
where $T^*(\alpha)= C{\alpha}\log |\log (\alpha)|$ and $C>0$ is independent of $\alpha$.
\end{Prop}
\subsubsection{Further estimates on the solutions to the \eqref{eq:LOM3D1}}
Applying Lemma \ref{prop:LOM} to equation \eqref{eq: g explicit} yields
\begin{itemize}
    \item Case (i): \eqref{eq:etablow-up} holds, while ${\Ome}$ does not blow up
\be
\left\|{\Ome}\right\|_{L^\infty} \lesssim \|\bar g_0^{\alpha, \delta}\|_{L^\infty} \lesssim \delta, \qquad  \left
\|{\Ome} \right\|_{\cH^N} \le C_N;
\ee
\item Case (ii): \eqref{eq:omegablow-up} holds and
\be\label{eq:xiblow-up}\ba
\sup\limits_{t \in [0, T^*(\alpha)]} \|\csi^{\alpha, \delta}(t) \|_{L^\infty}  &\ge  \|\xi_{0, \mathrm{app}}^{\alpha, \delta}\|_{L^\infty} \left(1+{\log |\log \alpha|}\right)^{\frac{1}{c_2}},
\ea\ee
while $\et$ does not blow up
\be
\left\|{\et}\right\|_{L^\infty} \lesssim \|\bar \eta_0^{\alpha, \delta}\|_{L^\infty} \lesssim \delta.
\ee
\end{itemize}
Notice that in Case (i) and Case (ii) of Proposition \ref{prop:expl3D}, the roles of $\et^{\alpha, \delta}, \csi^{\alpha, \delta}$ are simply switched, therefore most of the proofs will only treat Case (i).  
In Case (i), exactly as in Proposition \ref{lem:35}, one gets that
\begin{align}\label{eq:est-app}
   \|\Ome (t) \|_{\cH^k} \lesssim C_k \exp\left({\frac{C_k}{\alpha} t}\right), \quad \|\et (t) \|_{\cH^k} \lesssim C_k \exp\left({\frac{C_k}{\alpha} t}\right), \quad  \|\csi (t) \|_{\cH^k} \lesssim  C_{k+1}\alpha\exp\left({\frac{C_k}{\alpha} t}\right).
\end{align}

%%%%%

\begin{Rmk}[Comparison with the fundamental model in \cite{tarek1}]
    The fundamental model for the 3D axisymmetric Euler equations without swirl in \cite[page 668]{tarek1} reads
    \be
    \de_t \Ome = \frac{\cL_{12}(\Ome)}{2\alpha}\Ome,
    \ee
    which is solved by
\be\Ome=\frac{\Omega_{0, \mathrm{app}}^{\alpha, \delta}}{\left(1- \frac{t}{2\alpha}\cL_{12}(\Omega_0^{\alpha, \delta})\right)^2}.
    \ee
    Passing from \eqref{eq:LOM3D2} to \eqref{eq:LOM3D1}, we performed the change of variable $\Ome \, \to \, -\Ome$, so that, applying it again, the above equation is
     \be
    \de_t \Ome = -\frac{\cL_{12}(\Ome)}{2\alpha}\Ome,
    \ee
    whose solution reads 
    \be
    \Ome=\frac{\Omega_{0, \mathrm{app}}^{\alpha, \delta}}{\left(1+ \frac{t}{2\alpha}\cL_{12}(\Omega_{0, \mathrm{app}}^{\alpha, \delta})\right)^2},
    \ee
    with $\Omega_{0, \mathrm{app}}^{\alpha, \delta}= R^{-\frac 1 \alpha} \bar g_0^{\alpha, \delta}(R) \sin (2\beta) \cos \beta$, such that
    \begin{itemize}
        \item if $\bar g_0^{\alpha, \delta}(R)\ge 0$, then $\cL_{12}(\Omega_{0, \mathrm{app}}^{\alpha, \delta})=\frac{3}{16} \int_R^\infty \bar g_0^{\alpha, \delta}(s)(s) \, ds >0$ and $\Ome$ does not blow up;
        \item if $\bar g_0^{\alpha, \delta}(R)\le 0$, then $\cL_{12}(\Omega_{0, \mathrm{app}}^{\alpha, \delta})<0$, then $\Ome (t)$ blows up at algebraic rate $x^{-2}$ for $x \sim 0$ for times $t \sim \alpha$.
    \end{itemize}
    From inequality \eqref{eq:omegablow-upthm}, we see that the inflation rate of the true solution $\omega_\theta^\aa (t) $ is actually $\log |\log x|$ as $x \sim 0$ for a time-scale $t \sim \alpha \log |\log \alpha|$.
\end{Rmk}
\subsection{Localized elliptic estimates}\label{sec:supp-size}
We aim to develop elliptic estimates for the stream function $\Psi$ as in \eqref{eq:psibig} which are localized in space to avoid the singularity occurring in $r=0$.
For that purpose, we introduce
\begin{equation}\label{eq:T'}
    T'=\sup\left\{ T\in [0,T^*(\alpha)] \,\, : (1/10)^\frac{1}{\alpha}\leq \cS(t)\leq (1/6)^\frac{1}{\alpha} \, \text{for all} \, t\in [0,T]\right\}, \quad \text{with} \quad T^*(\alpha) \; \text{in} \, \eqref{def:talpha-star},
\end{equation}
and prove the required estimates for all $t\in [0,T']$. Note that in view of the local well-posedness of \eqref{eq:3Daxisymm}-\eqref{eq:elliptic3D} one has $T'>0$ by continuity.
First, we provide an estimate for $\Psi$ away from the support of $\omega_\theta^\aa$ the proof of which follows the same lines as \cite[Lemma 9.4]{chen1}.
\begin{Lem}\label{lem:outside}
    Let $t\in [0,T']$, $\psi^\aa$ be the solution to \eqref{eq:elliptic3D}-\eqref{eq:3Daxiboundarycond} and $\cS(t)$ be defined in \eqref{eq:supp-eta}. For any $(2\cS(t))^\alpha<\lambda<\frac{1}{2}$    
%    $\lambda> (2 \cS(t))^\alpha$
, recalling from \eqref{eq:psibig} that $\Psi(\cdot, R, \beta)=\rho^{-2}\psi^\aa (\cdot, r, z)$, it holds that 
    \begin{align}
        \|\Psi \mathbf{1}_{\lambda \le R \le 2\lambda} \|_{L^2} \lesssim (1+|\log (\lambda^\frac 1 \alpha)|) \cS(t) \lambda^{-\frac 1\alpha}\|\Omega\|_{L^2}.
    \end{align}
\end{Lem}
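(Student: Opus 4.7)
The plan is to build on the axisymmetric Biot--Savart representation of $\psi^\aa$, exploiting the fact that the hypothesis $(2\cS(t))^\alpha<\lambda$ guarantees that on the annulus $\{\lambda\leq R\leq 2\lambda\}$ we have $\rho\geq \lambda^{1/\alpha}\geq 2\cS(t)$, so that the evaluation points $(r,z)$ are strictly separated from the support of $\omega_\theta^\aa$. First I would write
\begin{equation*}
\psi^\aa(r,z)=\int G(r,z;r',z')\,\omega_\theta^\aa(r',z')\,\mathrm{d}r'\,\mathrm{d}z',
\end{equation*}
where $G$ is the Green's function of the elliptic operator in \eqref{eq:elliptic3d} with the boundary conditions \eqref{eq:3daxiboundarycond}. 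Since the definition of $T'$ gives $\cS(t)\leq (1/6)^{1/\alpha}$, all source points satisfy $|r'-1|\leq \cS(t)\leq 1/2$ and stay in a fixed neighborhood of the circle $r=1$, so that the classical representation of $G$ in terms of complete elliptic integrals applies.

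The key step will be the pointwise kernel estimate on the regime of interest, namely when the source $(r',z')$ lies in $\{\rho'\leq \cS(t)\}$ and the target $(r,z)$ lies in $\{\rho\sim \lambda^{1/\alpha}\}$. Then $|(r-1,z)-(r'-1,z')|\gtrsim \lambda^{1/\alpha}$, and a careful analysis of the elliptic integrals (of the type carried out in \cite[Lemma 9.4]{chen1}) yields
\begin{equation*}
|G(r,z;r',z')|\lesssim 1+|\log \lambda^{1/\alpha}|,
\end{equation*}
the logarithmic factor encoding the weak singularity of the kernel inherited from the $2$d-Laplacian behavior of the axisymmetric operator along the symmetry direction.

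Combining the kernel bound with Cauchy--Schwarz and $|\supp \omega_\theta^\aa|\lesssim \cS(t)^2$, I obtain the pointwise bound
\begin{equation*}
|\psi^\aa(r,z)|\lesssim (1+|\log \lambda^{1/\alpha}|)\,\|\omega_\theta^\aa\|_{L^1}\lesssim (1+|\log \lambda^{1/\alpha}|)\,\cS(t)\,\|\omega_\theta^\aa\|_{L^2}
\end{equation*}
uniformly for $(r,z)$ with $\rho\sim \lambda^{1/\alpha}$. Returning to the $(R,\beta)$ coordinates with the convention $\Psi=\rho^{-2}\psi^\aa$ and $R=\rho^\alpha$, the factor $\rho^{-2}$ contributes a pointwise multiplier of size $\lambda^{-2/\alpha}$, while the Jacobian $\mathrm{d}r\,\mathrm{d}z=\alpha^{-1}R^{(2-\alpha)/\alpha}\mathrm{d}R\,\mathrm{d}\beta$ together with the identification $\|\Omega\|_{L^2}=\|\omega_\theta^\aa\|_{L^2}$ (both taken against $\mathrm{d}R\,\mathrm{d}\beta$, per the notation) yields, after integration over the annulus (whose measure in $(R,\beta)$ is $O(\lambda)$),
\begin{equation*}
\|\Psi\mathbf{1}_{\lambda\leq R\leq 2\lambda}\|_{L^2}\lesssim (1+|\log \lambda^{1/\alpha}|)\,\cS(t)\,\lambda^{-1/\alpha}\,\|\Omega\|_{L^2}.
\end{equation*}

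The main obstacle is the careful tracking of the kernel estimate, which requires expanding the axisymmetric Green's function in terms of elliptic integrals and isolating the logarithmic behavior; since the geometry is precisely the one addressed in \cite[Lemma 9.4]{chen1}, I plan to invoke their computation rather than reprove it, and concentrate the work on the coordinate change and the bookkeeping of the weights in the measure.
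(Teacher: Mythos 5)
Your plan follows the same skeleton as the paper's proof: represent $\psi^\aa$ by the axisymmetric Biot--Savart kernel, use the separation $\rho\gtrsim\lambda^{1/\alpha}\gtrsim 2\cS(t)$ to get a pointwise logarithmic bound on the kernel, combine with the small support of $\omega_\theta^\aa$, and convert to $(R,\beta)$. One genuine difference in execution: the paper deliberately avoids the elliptic-integral analysis of \cite[Lemma 9.4]{chen1} (which involves a bounded domain and dynamic rescaling) by instead quoting the explicit whole-space Biot--Savart formula and the kernel bound of \cite[(2.27)--(2.28)]{marchioro}, obtaining the $1+|\log|$ estimate directly; your plan to ``invoke their computation'' imports machinery the present setting does not need. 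A second point to tighten is the measure bookkeeping at the end: the $\|\omega_\theta^\aa\|_{L^1}$ and $\|\omega_\theta^\aa\|_{L^2}$ coming out of Cauchy--Schwarz are with respect to $\mathrm{d}r'\,\mathrm{d}z'$, not $\mathrm{d}R\,\mathrm{d}\beta$, so the ``identification $\|\Omega\|_{L^2}=\|\omega_\theta^\aa\|_{L^2}$'' is not free but costs the Jacobian factor $\alpha^{-1/2}R^{(2-\alpha)/(2\alpha)}$ on the support of $\Omega$, i.e.\ roughly $\alpha^{-1/2}\cS(t)^{1-\alpha/2}$. Once this is inserted, the final inequality still closes, but only after using that $\cS(t)$ is exponentially small in $1/\alpha$ (as guaranteed by \eqref{eq:S0}, \eqref{eq:T'}), so you should not omit that step.
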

The $L^2$-estimate is proven along the same lines as the respective version in \cite[Lemma 9.4]{chen1}. However, the proof given here is simplified as, in contrast to \cite{chen1}, no dynamic scaling is used and the elliptic problem is posed on the whole space for which an explicit Biot-Savart formula is known, e.g. \cite{marchioro}.

\begin{proof}
    Exploiting the explicit representation formula for the Biot-Savart law of the axi-symmetric Euler eq., namely for \eqref{eq:elliptic3D}, see \cite[Section 2]{marchioro}, one has that 
    \begin{equation*}
        \left|\psi^\aa(r,z)\right|\lesssim  \left|\int_{-\pi}^\pi \int_0^{\infty} \int_{-\infty}^{\infty}\frac{\cos\gamma}{4\pi\sqrt{\frac{(z-z')^2+(r-r')^2}{rr'}+2(1-\cos\gamma)}}\omega_\theta^\aa (r',z')\sqrt{\frac{r'}{r}}d\gamma dr'dz' \right|.
    \end{equation*}
    Taking into account that $(r,z)$ is chosen away from $\mathrm{supp}(\omega_\theta^\aa)$, integrating in $\gamma$ and using the estimate \cite[(2.27)-(2.28)]{marchioro}, one concludes that
    \begin{equation*}
        \left|\psi^\aa(r,z)\right|\lesssim  \int_0^{\infty}\int_{-\infty}^{\infty} |\omega_\theta^{\aa}(r',z')|\left(1+\left|\log\left(\frac{(r-r')^2+(z-z')^2}{rr'}\right)\right|\right)\sqrt{\frac{r'}{r}}dr'dz'.
    \end{equation*}
Note that in this setting, there exist two positive $r_0>0, r_0'>0$ such that $r>r_0, r'> r_0'$. We conclude that
\begin{equation*}
    \left|\Psi(R,\beta)\right|\lesssim R^{-\frac{2}{\alpha}}\left(1+\left|\log(R^{\frac{2}{\alpha}})\right|\right)\int |\omega_\theta^\aa(r',z')|\sqrt{\frac{r'}{r}}dr' dz'\lesssim R^{-\frac{2}{\alpha}}\left(1+\left|\log(R^{\frac{2}{\alpha}})\right|\right)\frac{\cS(t)^{2-\frac{2}{\alpha}}}{\sqrt{\alpha}}\|\Omega\|_{L^2},
\end{equation*}
provided  $\lambda\leq R \leq 2\lambda$. The estimate then follows by computing the localized $L^2$-norm.
\end{proof}

\subsubsection{Localized elliptic estimates}
Here and below we use the notation $D_R:=R\de_R$. Given $\chi(R) \in C_c^\infty([0,\infty)) $ such that 
\begin{equation*}
    \mathbf{1}_{[0,1]}(R)\leq \chi(R)\leq \mathbf{1}_{[0,2)}(R), \qquad D_R\chi(R)\lesssim \chi(R)
\end{equation*}
for all $R\geq 0$, let us introduce the cut-off function $\chi_\lambda (R):=\chi(R/\lambda)$, where $\lambda>0$ is a parameter to be chosen. 
We first develop some localized elliptic estimates that will be used to estimate the support size of our unknowns.

\begin{Prop}\label{lem:localized-ell}
Suppose that $t\in [0,T']$ with $T'$ in \eqref{eq:T'}, then there exists $(2\cS(t))^{\alpha}<\lambda<1/2$ such that the analogous decomposition of Theorem \ref{thm:tarek3D} holds, i.e.
\begin{align}
    \Psi_\lambda= \Psa^\lambda+\hE^\lambda, \quad \Psa^\lambda=\Psa(\Omega \chi_\lambda+Z_2)=\Ps(\Omega)+\widetilde{\mathcal{R}}^\alpha,
\end{align}
where $\Psa(\Omega), \Ps(\Omega)$ are given in \eqref{eq:psi23D} and \begin{align}
Z_2:=2\alpha^2 R^2 \frac{\chi'_\lambda}{\lambda}\de_R \Psi+
 \alpha^2R^2\frac{\chi_\lambda''}{\lambda^2}+\alpha(\alpha+5)R \frac{\chi_\lambda'}{\lambda}\Psi.
\end{align}
For $k \ge 3$, the following estimates hold:
\begin{align}\label{eq:first-loc}
    \alpha^2\|R \de_R \Psi_\lambda\|_{L^2}^2+\alpha\|\Psi_\lambda\|_{L^2}^2 + \alpha\|\de_\beta \Psi_\lambda\|_{L^2}^2 &\lesssim \alpha^{-1}\|\Omega\|_{L^2}^2, \\
    \alpha^2\|R \de_{RR} \Psi_\lambda\|_{\cH^k(\rho_i)} + \alpha \|R \de_{R\beta}\Psi_\lambda\|_{\cH^k(\rho_i)} + \|\de_{\beta \beta}(\Psi_\lambda-\Ps (\Omega))\|_{\cH^k(\rho_i)} & \lesssim \|\Omega\|_{\cH^k(\rho_i)}.
\end{align}
\end{Prop}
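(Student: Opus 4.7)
The plan is to multiply the elliptic equation \eqref{eq:elliptic3dnew} by the radial cut-off $\chi_\lambda(R)$ and work with the localized stream function $\Psi_\lambda := \chi_\lambda \Psi$. A direct computation using the Leibniz rule shows that $\Psi_\lambda$ satisfies
\[
-\alpha^2 R^2 \partial_{RR}\Psi_\lambda - \alpha(\alpha+5)R\partial_R\Psi_\lambda - \partial_{\beta\beta}\Psi_\lambda + \partial_\beta((\tan\beta)\Psi_\lambda) - 6\Psi_\lambda = \chi_\lambda \Omega + Z_2,
\]
with $Z_2$ precisely the commutator term defined in the statement. I would pick $\lambda$ in the range $(2\cS(t))^\alpha < \lambda < 1/2$, so that $\chi_\lambda \equiv 1$ on $\supp(\Omega)$ (hence $\chi_\lambda\Omega = \Omega$, whence $\Ps(\chi_\lambda\Omega) = \Ps(\Omega)$ since $\cL_{12}$ integrates in $R$ from $R$ to $\infty$), while $Z_2$ is supported in the annulus $\{\lambda \le R \le 2\lambda\}$, disjoint from $\supp(\Omega)$. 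Applying Theorem \ref{thm:tarek3d} to the right-hand side then produces the decomposition $\Psi_\lambda = \Psa^\lambda + \hE^\lambda$ with $\Psa^\lambda = \Ps(\Omega) + \widetilde{\mathcal R}^\alpha$, where $\widetilde{\mathcal R}^\alpha$ now encodes both the original remainder $\mathcal R^\alpha(\Omega)$ and the contribution coming from $Z_2$.

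For the first $L^2$-type estimate, I would test the equation for $\Psi_\lambda$ against $\Psi_\lambda$. Integration by parts in $R$ yields the term $\alpha^2\|R\partial_R\Psi_\lambda\|_{L^2}^2$ up to a coercivity constant $(\alpha+5)/2 - 1>0$ that absorbs the $-6\Psi_\lambda^2$ contribution and the boundary terms; integration by parts in $\beta$ together with the identity $\partial_\beta((\tan\beta)\Psi_\lambda)\Psi_\lambda = \tfrac{1}{2}\sec^2(\beta)\Psi_\lambda^2 + \tfrac{1}{2}\tan\beta\,\partial_\beta(\Psi_\lambda^2)$ produces the positive term $\|\partial_\beta \Psi_\lambda\|_{L^2}^2$ with non-negative contributions from the $\tan\beta$ piece. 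The right-hand side is bounded by $(\|\chi_\lambda \Omega\|_{L^2} + \|Z_2\|_{L^2})\|\Psi_\lambda\|_{L^2}$, and the norm of $Z_2$ on the annulus $\{\lambda \le R \le 2\lambda\}$ is controlled through Lemma \ref{lem:outside} by $\cS(t)\lambda^{-1/\alpha}\|\Omega\|_{L^2}$; the choice $\lambda > (2\cS(t))^\alpha$ reduces this to a constant multiple of $\|\Omega\|_{L^2}$. Rearranging by Young's inequality provides the claimed bound, with the $\alpha^{-1}$ loss coming from the logarithmic factor in Lemma \ref{lem:outside} and the coefficients of the elliptic operator.

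For the higher-order weighted estimates, I would apply the elliptic theory of Theorem \ref{thm:tarek3d} and its variants (in the spirit of Proposition \ref{lam:Rest}) to the decomposed equation for $\Psi_\lambda$. The principal term $\Ps(\Omega)$ is controlled directly from $\cL_{12}$, and the remainder $\Psi_\lambda - \Ps(\Omega)$ satisfies a $\partial_{\beta\beta}$ estimate that is uniform in $\alpha$ because the source $\chi_\lambda\Omega + Z_2 - \Omega_2$ is orthogonal to $\sin(2\beta)$ in $L^2_\beta$ (up to the explicitly controlled $\widetilde{\mathcal R}^\alpha$). Derivatives of $Z_2$ fall either on $\chi_\lambda$, producing factors of $\lambda^{-1} \lesssim 1$ (since the allowed range for $\lambda$ is bounded away from $0$ uniformly in $\alpha$ on $[0,T']$ thanks to $\cS(t)\le (1/6)^{1/\alpha}$), or on $\Psi$ restricted to the annulus $\{\lambda \le R \le 2\lambda\}$, where interior elliptic regularity propagated from Lemma \ref{lem:outside} yields $\cH^k$-control in terms of $\|\Omega\|_{\cH^k}$. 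The main obstacle will be the careful interaction of the weights $\rho_i = (1+R)^4 R^{-4}\sin(2\beta)^{-\sigma_i}$ with the $\tan\beta$ term in the elliptic operator: the exponents $\sigma_1 = 99/100$ and $\sigma_2 = 1 + \alpha/10$ in \eqref{def:weights3d} are tuned precisely so that the boundary contributions from integration by parts in $\beta$ against $\sin(2\beta)^{-\sigma_i}$ remain subcritical and can be absorbed at every order $k$. Following the strategy of \cite{chen1}, organizing this absorption uniformly in $\alpha$ and $k$ while keeping the loss in $\lambda$ under control is the delicate part of the argument.
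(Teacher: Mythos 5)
Your overall strategy matches the paper's: localize the stream function with $\chi_\lambda$, compute the commutator $Z_2$, apply Elgindi's decomposition (Theorem \ref{thm:tarek3d}) to the localized equation, test against $\Psi_\lambda$ for the $L^2$-estimate, control $Z_2$ through Lemma \ref{lem:outside}, and defer the higher-order weighted estimates to the machinery of \cite{chen1}. Your observations that $\chi_\lambda\equiv 1$ on $\supp(\Omega)$ (hence $\Ps(\chi_\lambda\Omega)=\Ps(\Omega)$) and that $Z_2$ is supported in the annulus $\{\lambda\le R\le 2\lambda\}$ are correct and in the spirit of the paper's proof.

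However, there is a genuine gap in your coercivity argument for the first estimate. You claim that a ``coercivity constant $(\alpha+5)/2-1>0$'' coming from the $R$-integration by parts absorbs the $-6\Psi_\lambda^2$ contribution. This is false on two counts. First, the actual coefficient of $\|\Psi_\lambda\|_{L^2}^2$ produced by integrating $-\alpha(\alpha+5)R\de_R\Psi_\lambda\Psi_\lambda$ and $-\alpha^2 R^2\de_{RR}\Psi_\lambda\Psi_\lambda$ by parts is $\tfrac{5\alpha-\alpha^2}{2}$, which is $O(\alpha)$ and vanishes as $\alpha\to 0$; it cannot absorb the $-6\|\Psi_\lambda\|_{L^2}^2$ coming from the zeroth-order term. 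Even adding the $\tfrac12\|(\sec\beta)\Psi_\lambda\|_{L^2}^2$ term from the $\tan\beta$ integration by parts, you only gain another $\tfrac12\|\Psi_\lambda\|_{L^2}^2$, still far short of $6$. The coercivity \emph{must} come from the angular derivative, and this requires the spectral decomposition in $\{\sin(2n\beta)\}_{n\ge 1}$ that you do not invoke. The key observations, which the paper makes explicitly, are: (a) the $\sin(2\beta)$ mode lies in the kernel of the angular part of the elliptic operator, since
\begin{equation*}
\de_{\beta\beta}\left(f(R)\sin(2\beta)\right)-\de_\beta\!\left((\tan\beta)f(R)\sin(2\beta)\right)+6f(R)\sin(2\beta)=0,
\end{equation*}
so the energy estimate gives no information on this mode and its bound must instead come from the explicit formula \eqref{eq:psiapp3d}; and (b) on the orthogonal complement (modes $\sin(2n\beta)$, $n\ge 2$) one has the Poincar\'e inequality $\|\de_\beta\Psi_\lambda\|_{L^2}^2\ge 8\|\Psi_\lambda\|_{L^2}^2$, which yields
\begin{equation*}
-(6-\alpha)\|\Psi_\lambda\|_{L^2}^2+\|\de_\beta\Psi_\lambda\|_{L^2}^2\ge\left(1-\tfrac{6-\alpha}{8}\right)\|\de_\beta\Psi_\lambda\|_{L^2}^2\ge\tfrac14\|\de_\beta\Psi_\lambda\|_{L^2}^2.
\end{equation*}
Without this two-step spectral argument your testing-against-$\Psi_\lambda$ computation does not close. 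Your outline of the higher-order weighted estimates is only a sketch, but it points to the same sources (\cite{chen1} and the elliptic theory of Proposition \ref{lam:Rest}) as the paper, so I do not consider it a gap given the level of detail in the paper's own proof.
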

\begin{proof}
Note that with the bound of $\cS(t)$ provided by \eqref{eq:T'} and for $(2\cS(t))^{\alpha}<\lambda<1/2$ one has that 
$\lambda^{-\frac{1}{\alpha}}\cS(t)<  \frac{\cS(t)}{2\cS(t)}=\frac{1}{2}$. However, choosing for instance $\lambda=\frac{4}{9}$ yields 
$\lambda^{-\frac 1 \alpha} \cS(t) \lesssim \alpha.$
The localized stream function
$\Psi_\lambda$ satisfies
\begin{align*}
\alpha^2 R^2 \de_{RR} \Psi_\lambda + \alpha (\alpha + 5) R \de_R \Psi_\lambda + \de_{\beta \beta} \Psi_\lambda - \de_\beta ((\tan \beta) \Psi_\lambda)+6 \Psi_\lambda&=-\Omega \chi_\lambda+Z_2.
\end{align*}
Estimating the left-hand side follows the same lines of the proof of \cite[Proposition 7.8]{tarek1}, multiplying by $\Psi_\lambda$ yields
\begin{align*}
    \alpha^2 \|R \de_R \Psi_\lambda\|_{L^2}^2  + \frac{5\alpha -\alpha^2}{2}\|\Psi_\lambda\|_{L^2}^2+\|\de_\beta \Psi_\lambda\|_{L^2}^2 + \frac{1}{2} \|(\sec \beta )\Psi_\lambda\|_{L^2}^2 - 6 \|\Psi_\lambda\|_{L^2}^2
     \lesssim | \langle Z_2, \Psi_\lambda \rangle|+ \|\Omega \chi_\lambda\|_{L^2} \|\Psi_\lambda\|_{L^2}.
\end{align*}
Following the proof \cite[Lemma 9.6]{chen1}, we integrate by parts the first term of the right-hand side and by the property of the cut-off $(R \de_R \chi_\lambda)^2\lesssim \chi_\lambda, \, |D_R^k \chi_\lambda| \lesssim \mathbf{1}_{\lambda \le R \le 2\lambda}$, noticing that $|\log (\lambda^\frac 1 \alpha)| \lesssim \alpha^{-1}$ gives
\begin{align*}
    \langle Z_2, \Psi_\lambda \rangle \lesssim \alpha  \|\Psi \mathbf{1}_{\lambda \le R \le 2 \lambda}\|_{L^2}^2 \lesssim \alpha^{-1}  \|\Omega \|_{L^2}^2,
\end{align*}
where we used Lemma \ref{lem:outside}.
Using the Fourier Sine expansion $\{\sin(2n \beta)\}_{n \ge 1}$, note that for $n=1$, namely $\Psi_{2, \lambda}=\hat{\Psi}_2 \sin (2\beta),$ the term $\de_{\beta \beta}\Psi_{2, \lambda}-\de_\beta ((\tan \beta) \Psi_{2, \lambda}) + 6 \Psi_{\lambda, 2}=0$. In fact, the desired estimate for $\Psi_{2, \lambda}$ is obtained using its explicit formula.  
For the modes $n \ge 2$ it holds that
\begin{align*}
    \|\de_\beta \Psi_\lambda\|_{L^2}^2 \ge 8 \|\Psi_\lambda\|_{L^2}^2. 
\end{align*}
One can then rearrange some of the terms in the right-hand side of the above inequality, yielding
\begin{align*}
    -(6-\alpha) \|\Psi_\lambda\|_{L^2}^2 + \|\de_\beta \Psi_\lambda\|_{L^2}^2 \ge  \left(-\frac{(6-\alpha)}{8}+1\right) \|\de_\beta \Psi_\lambda\|_{L^2}^2\ge \frac 1 4 \|\de_\beta \Psi_\lambda\|_{L^2}^2.
\end{align*}
Finally, using that $\|\de_\beta \Psi_\lambda\|_{L^2} \ge 2 \|\Psi_\lambda\|_{L^2}$ and the Young  inequality yields 
\begin{align*}
\|\Omega\chi_\lambda\|_{L^2}\|\Psi_\lambda\|_{L^2} \le \frac 12 \|\Omega\|_{L^2} \|\de_\beta \Psi_\lambda\|_{L^2} \le \frac{1}{4\alpha}\|\Omega\|^2_{L^2}+\frac{\alpha}{4}\|\de_\beta \Psi_\lambda\|_{L^2}^2,
\end{align*}
so that the latter is absorbed by the left-hand side.
This concludes the proof of the first estimate. \\
Let us deal with the second estimate, whose proof, which is an adaptation of \cite[Proposition 9.9]{chen1}, follows by a weighted $L^2(\rho_i)$ estimate where the weights $\rho_i$ are defined in \eqref{def:weights3D}. More precisely, the estimate
\begin{align*}
   \alpha^2\|R^2 \de_{RR}\Psi_\lambda\|_{L^2(\rho_i)} + \alpha \|R \de_{R\beta}\Psi_\lambda\|_{L^2(\rho_i)} + \|\de_{\beta \beta}(\Psi_\lambda-\Ps (\Omega+Z_2))\|_{L^2(\rho_i)} & \lesssim \|\Omega \chi_\lambda+Z_2\|_{L^2(\rho_i)}
\end{align*}
is given by the proof of \cite[Proposition 9.9]{chen1}. We only have to show that
\begin{align*}
    \|\Ps(Z_2)\|_{L^2(\rho_i)} \lesssim \|\Omega\|_{L^2(\rho_i)}, \quad \|Z_2\|_{L^2(\rho_i)} \lesssim \|\Omega\|_{L^2(\rho_i)}.
\end{align*}
Consider the last addend of $Z_2$, for which one has
\begin{align*}
    \frac \alpha \lambda (\alpha +5)\| R \chi_\lambda' \Psi \rho_1^{1/2}\|_{L^2} &\lesssim\frac \alpha \lambda (\alpha +5)\|R \chi_\lambda' (\sin 2\beta)^{-\frac{99}{100}}\Psi\|_{L^2} \lesssim \frac \alpha \lambda (\alpha +5)\|R \chi_\lambda' \de_\beta \Psi\|_{L^2}\\& = \alpha (\alpha+5) \|D_R \chi_\lambda \de_\beta \Psi\|_{L^2} \lesssim  \alpha (\alpha+5)\alpha^{-1} \|\Omega\|_{L^2} \lesssim \|\Omega\|_{L^2(\rho_1)},
\end{align*}
where we used that $|D_R \chi_\lambda| \lesssim \mathbf{1}_{\lambda/2 \le R \le \lambda}$ and the estimate of Lemma \ref{lem:outside}. Estimating the other addends of $Z_2$ is analogous.  \\
Now, by Lemma \ref{lem:outside} and owing to  $\lambda^{-\frac 1 \alpha} \cS(t) \lesssim \alpha$ (which is stronger than $\lambda> (2\cS(t))^\alpha$ in Lemma \ref{lem:outside}), we have 
\begin{align}\label{eq:est-fuori}
    \|\Psi\mathbf{1}_{\lambda \le R \le 2\lambda}\|_{L^2} \lesssim \|\Omega\|_{L^2}.
\end{align}
From that and from the presence of the factor $\alpha$ in front of all the terms of $Z_2$, the estimate $\|\Ps(Z_2)\|_{L^2(\rho_i)} \lesssim \|\Omega\|_{L^2(\rho_i)}$ follows as well. To obtain weighted and localized estimates for the derivatives, the rest of the proof of \cite[Proposition 9.9]{chen2022} is based on a finite induction where at the step $j$, one considers $j$-th derivatives and $\lambda=\lambda_j$ decreasing in $j \in \{0, \cdots, J\}$ with $J \in \N$. Here we choose $\lambda_j$ so that $\lambda_0=\frac{4}{9}+2^{-5}$ and $\lambda_J=\frac{4}{9}+2^{-5-J}$.
The rest is identical to \cite[Proposition 9.9]{chen2022} and therefore it is omitted.
\end{proof}

\subsection{Remainder estimates %\ref{thm:main3sec3}.
 }\label{sec:remainder3D}
The only substantial difference between \eqref{eq:leading1} for the 2D Boussinesq equations and \eqref{eq:LOM3D1} for the 3D axisymmetric Euler equations is the equation for $\Ome$. More precisely, for the remainders due to the transport term, namely $\mathcal{T}f(R, \beta)$ in \eqref{eq:transport terms}, one can prove the analogous of Lemma \ref{lem:remT} with $\cH^N$ in \eqref{def:weights3D} and taking into account the localization of Proposition \ref{lem:localized-ell}. In particular, for $\Psi_\lambda = \Psi \chi_\lambda$ as in Proposition \ref{lem:localized-ell}, one notices that
\begin{align*}
    \Psi=\Psi_\lambda+\Psi \mathbf{1}_{R \ge 2\lambda}. 
\end{align*}
The first addend is estimated by Proposition \ref{lem:localized-ell}, while the latter is a lower-order term, with similar estimates to (actually better than) \eqref{eq:est-fuori}.
Therefore, to prove the analogous of Proposition \ref{prop:rem}, we only need to control the error due to the second addend in the right-hand side of the equation for $\Omega_\text{app}$ in \eqref{eq:LOM3D1}, namely
\be\ba\label{eq:resto3D}
\frac{\xi}{r^3}=\frac{\xi \chi_\lambda + \xi  (1-\chi_\lambda)}{(1+R^\frac 1\alpha \cos \beta)^3 }.
\ea\ee
\begin{Lem}\label{lem:1rem3D} There exists $\alpha_0 \ll 1$ small enough such that, if $\alpha \le \alpha_0$, for all $t\in [0,T']$ with $T'$ defined in \eqref{eq:T'}, taking $N \ge 4$ it holds that
\be
\left\langle \frac{\xi}{r^3}, \Omega_r \right \rangle_{\cH^N}
\le \left(C_N\exp\left({\frac{C_N}{\alpha}t}\right)+\|\xi_r\|_{\cH^N}\right) \|\Omega_r\|_{\cH^N}.
\ee
More precisely, there exists $\lambda>0$ such that 
\be
\left\langle \frac{\xi \chi_\lambda}{r^3}, \Omega_r \right \rangle_{\cH^N}
\le \left(C_N\exp\left({\frac{C_N}{\alpha}t}\right)+\|\xi_r \chi_\lambda\|_{\cH^N}\right) \|\Omega_r\|_{\cH^N}.
\ee
\end{Lem}
\begin{proof}
The considered remainder term is divided into two pieces as in \eqref{eq:resto3D}. We note that for $t\in [0,T']$ it holds that $ \supp(\xi)\subset \{(R,\beta) : \frac{1}{10}\leq R \leq \frac16\}$ by definition. Hence, for $\lambda=\frac49$ yields that 
\begin{equation*}
    (\xi(1-\chi_\lambda))(R)=0
\end{equation*}
for all $R\geq 0$. On the other hand, there exists $\alpha_0>0$ small enough such that, for all $0<\alpha\le \alpha_0$,
    \begin{align*}
        \frac{\chi_\lambda(R)}{(1+R^\frac{1}{\alpha}\cos\beta)^3} \le 2.
    \end{align*}
The desired $L^2(\rho_i)$ estimate is then a consequence of Cauchy-Schwarz inequality and the bound of $\|\csi\|_{\cH^k}$ in \eqref{eq:est-app}. To bound the $\cH^N$-norm, it is sufficient to observe that the most singular term is the one involving derivatives of $(1+R^\frac{1}{\alpha}\cos\beta)^{-3}$ which can be bounded by exploiting
\begin{equation*}
    \alpha^{-N} R^{\frac{1}{\alpha}}\lesssim 1,
\end{equation*}
for $\frac{1}{10}<R<\frac{1}{6}$ and $\alpha$ small enough.

% the most singular term is the one involving
% \begin{equation*}
%     \left\|\chi_{\lambda}(R)\xi\de_R^N\left(1+R^{\frac{1}{\alpha}}\cos(\beta)\right)^{-3}\right\|_{L^2}\lesssim \|\xi\chi_{\lambda}(R)\|_{L^{\infty}}\left(\int_{?}\frac{1}{\alpha^{2N}}R^{\frac{2N}{\alpha}-2}d R\right)^{\frac{1}{2}}
% \end{equation*}
\end{proof}
Note that the choice of the regularity index $N\ge 4$ is due to the fact that in $\cH^N, \, N\ge 4$, the remainder estimates below are simpler as the embedding theorems are better, as observed in \cite[Section 8]{tarek1}.
Arguing as for the proof of Proposition \ref{prop:rem} and relying on the Sobolev-embeddings for $\cH^k$ proven in \cite[Corollary 8.3]{tarek1}, we conclude that the remainders are small in $\cH^N$-norm up to time $T'$. 
\begin{Cor}\label{cor:rem3D}
 Let $N\geq 4$, then 
    \begin{equation*}
        F(t)=\|\Omega_{r}(t)\|_{\cH^N}+\|\eta_{r}(t)\|_{\cH^N}+\|\xi_{r}(t)\|_{\cH^N}.
    \end{equation*}
    Then 
    \begin{equation*}
        F(t)\le C \sqrt{\alpha}
        %C_{N+1}\sqrt{\alpha},
    \end{equation*}
    for all $0\leq t\leq T'$ with $T'$ defined in \eqref{eq:T'} and where $C_{N+1}$ only depends on the $\cH^{N+1}$-norm of the initial data $(\Omega_0,\eta_0,\xi_0)$. In particular,
    \begin{equation}%\label{eq:est-rim-Linfty}
        \|\Omega_r(t)\|_{L^{\infty}}+ \|\eta_r(t)\|_{L^{\infty}}+ \|\xi_r(t)\|_{L^{\infty}}\le C \sqrt{\alpha}
        %\leq C_{N+1}\sqrt{\alpha}
    \end{equation}
    for all $0\leq t\leq T'$, where $C>0$ is independent of $\alpha$.
    % with $T=\frac{\alpha}{4C_{N+1}}\log|\log\alpha|$.
\end{Cor}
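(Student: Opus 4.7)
The plan is to mirror the strategy employed for Proposition \ref{prop:rem} in the 2d Boussinesq setting, replacing each ingredient by its axisymmetric 3d counterpart already established in Section \ref{sec:euler}. First, I would derive the system for $(\Omega_r,\eta_r,\xi_r)$ by subtracting the \eqref{eq:LOM3d1} from \eqref{eq:3daxisymm} written in $(R,\beta)$ coordinates via \eqref{eq:der-3d}--\eqref{eq:uru3}. The resulting equations admit the same structural decomposition as in Section \ref{sec:system rem}: a transport operator $\mathcal{T}$ built from $\Psi_\mathrm{app}+\Psi_r$, a quasilinear leading contribution involving $\cL_{12}(\Omega_\mathrm{app})/\alpha$, several $O(\alpha)$ contributions $\mathrm{I}_{j,\bullet}$, and a contribution from $\hE(\Omega)$ whose $\de_\beta$-derivatives are orthogonal to the low-frequency modes killed by $\Psa$. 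In the present 3d setting, there is one additional forcing term, namely $-\xi/r^3$ in the equation for $\Omega_r$, which is precisely the one controlled by Lemma \ref{lem:1rem3d}.

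Next, I would run an $\cH^N$ energy estimate (with the weights $\rho_i$ in \eqref{def:weights3d}), exactly as in Lemmas \ref{lem:remT}--\ref{lem:RHS3}, but using the localized elliptic estimates of Proposition \ref{lem:localized-ell} instead of Proposition \ref{lam:Rest}. The splitting $\Psi=\Psi_\lambda+\Psi\mathbf{1}_{R\geq 2\lambda}$ for a fixed $\lambda\in((2\cS(t))^\alpha,1/2)$ is legal on $[0,T']$ thanks to \eqref{eq:T'}; the piece $\Psi\mathbf{1}_{R\geq 2\lambda}$ is far from $\supp(\Omega)$ and is controlled by Lemma \ref{lem:outside}, which is of lower order, while $\Psi_\lambda$ satisfies the same type of bounds as $\Psi$ did in the 2d case. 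For the Sobolev-product estimates I would rely on the embedding $\cH^k\hookrightarrow\cW^{k-2,\infty}$ proven in \cite[Corollary 8.3]{tarek1}, which requires $N\geq 4$ (whence the assumption). The upshot is the differential inequality
\begin{equation*}
\frac{d}{dt}F(t)^2\leq C_{N+1}\eul^{\frac{C_{N+1}}{\alpha}t}\bigl(C_{N+1}+\alpha\eul^{\frac{C_N}{\alpha}t}\bigr)F(t)+\Bigl(\frac{C_{N+1}}{\alpha}+\frac{C_{N+1}}{\alpha}\eul^{\frac{C_{N+1}}{\alpha}t}\Bigr)F(t)^2+\frac{1}{\alpha}F(t)^3,
\end{equation*}
identical in structure to \eqref{eq:boot}, the only new contribution $-\langle\xi/r^3,\Omega_r\rangle_{\cH^N}$ being absorbed via Lemma \ref{lem:1rem3d} into the linear/quadratic terms on the right.

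With that inequality in hand, the proof concludes by repeating verbatim the bootstrap argument of Lemma \ref{eq:bootstrap}: starting from $F(0)\lesssim \alpha C_{N+1}\leq \alpha^{1-\eps}$ and assuming $F(t)\leq 4\sqrt\alpha$ on $[0,T']$, an integral Gronwall estimate upgrades this to $F(t)\leq 3\sqrt\alpha$ provided $T'\leq \frac{\alpha\log|\log\alpha|}{4C_{N+1}}=T^*(\alpha)$, which is guaranteed by the definition \eqref{eq:T'}. The $L^\infty$ bound then follows from $\cH^N\hookrightarrow L^\infty$.

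The main obstacle, compared with the 2d case, is twofold: (i) the elliptic problem \eqref{eq:elliptic3dnew} is singular at $r=0$, which is why all elliptic estimates must be run locally in space and one must ensure the cut-off $\chi_\lambda$ interacts harmlessly with the commutators appearing in the $\cH^N$-energy identity — the point is precisely that the error $Z_2$ produced by commuting $\chi_\lambda$ through the elliptic operator, bounded via Lemma \ref{lem:outside}, behaves like the lower-order source terms already present; and (ii) the new term $\xi/r^3$, which has no 2d analogue and couples the dynamics of $\Omega_r$ to $\xi$. The key observation handling (ii) is that $\supp(\xi)\subset\{1/10\leq R\leq 1/6\}$ on $[0,T']$ by virtue of the support bound in \eqref{eq:T'}, so that $\chi_\lambda/(1+R^{1/\alpha}\cos\beta)^3$ is uniformly bounded there and the factor $R^{1/\alpha}$ annihilates all the negative powers of $\alpha$ arising from differentiation, which is exactly the content of Lemma \ref{lem:1rem3d}.
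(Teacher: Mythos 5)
Your proposal is correct and follows essentially the same route as the paper: you identify the same new ingredients beyond Proposition \ref{prop:rem} — the localized elliptic estimates of Proposition \ref{lem:localized-ell} (with the splitting $\Psi=\Psi_\lambda+\Psi\mathbf{1}_{R\geq 2\lambda}$ justified by Lemma \ref{lem:outside} on $[0,T']$), the control of the new $\xi/r^3$ source term via Lemma \ref{lem:1rem3d}, the weighted $\cH^N(\rho_i)$ spaces of \eqref{def:weights3d}, and the embedding from \cite[Corollary 8.3]{tarek1} that motivates $N\geq 4$ — and then close the argument with the same bootstrap as in Lemma \ref{eq:bootstrap}. The only slight imprecision is the identification ``$T^*(\alpha)=\frac{\alpha\log|\log\alpha|}{4C_{N+1}}$'': the paper defines $T^*(\alpha)$ only up to a multiplicative constant (``$\sim$''), while the bootstrap Lemma \ref{eq:bootstrap} runs up to time exactly $\frac{\alpha\log|\log\alpha|}{4C_{N+1}}$; since $T'\leq T^*(\alpha)$ by definition of \eqref{eq:T'}, this is harmless once the implicit constant in $T^*(\alpha)$ is fixed appropriately, but worth noting for precision.
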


\subsection{Support size}\label{sec:support}
The objective of this section is to show that the support of $\omega_\theta^\aa, (ru_\theta^\aa)^2$ remains sufficiently localized and especially away from the symmetric axis up to the time for which norm-inflation of \eqref{eq:LOM3D1} occurs. More precisely, we show in particular that $T'\geq T^*(\alpha)$ with $T'$ and $T^*(\alpha)$ defined in \eqref{def:talpha-star} and \eqref{eq:T'} respectively.
We exploit that the transport type equation with anelastic constraint
\begin{equation*}
    \de_t f+ u\cdot\nabla_{r,z}f=0, \qquad \Div_{r,z}(r u)=0, 
\end{equation*}
satisfied by $(ru_\theta^\aa)^2$, see \eqref{eq:utheta2},
can equivalently be described through the unique associate flow
\begin{equation}\label{eq:flow}
    \dot{X}(t,x_0)=u(t,X(t,x_0)), \qquad X(0)=x_0.
\end{equation}
This is well-known in the cases of (nearly) incompressible vector fields $u$, see e.g. \cite{bertozzi}. Note that $\Div(u)=-r^{-1}\nabla_{r,z}r\cdot u$ is in general not bounded. Existence and uniqueness of the associated flow are proven in \cite[Proposition 2.2]{hientzsch} for the lake equations which represents a generalization of the axi-symmetric Euler eq. (without swirl). 

\begin{Lem}\label{lem.local-supp}
    Suppose, as in \eqref{eq:S0}, that $$(1/8+\eps)^\frac 1 \alpha \le \cS(0) \le (1/7+\eps )^\frac 1 \alpha.$$ Then, for all $t \in [0, T^*(\alpha)]$ it holds that 
    $$ (1/8)^\frac 1 \alpha \le \cS(t) \le (1/7+2\eps )^\frac 1 \alpha.$$
    In particular, it holds that $T'=T^*(\alpha)$ for $T'$ defined in \eqref{eq:T'} and there exists $\lambda>0$ such that
    \begin{align}
        \lambda^{-\frac 1 \alpha} \cS(t) \lesssim \alpha \qquad \text{for all} \,\, t \in [0, T^*(\alpha)],
    \end{align}
    and for $\alpha \le \alpha_0 \ll 1 $ small enough.
\end{Lem}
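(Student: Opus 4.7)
The plan is to run a bootstrap argument on the support size, using that the joint support of $(\omega_\theta^\aa, u_\theta^\aa)$ is transported by the flow of $(u_r^\aa, u_z^\aa)$ up to a source term that lives inside that same support. Set
\[
T_1 := \sup\Bigl\{T \in [0, T^*(\alpha)] : (1/8)^{1/\alpha} \le \cS(t) \le (1/7 + 2\eps)^{1/\alpha} \text{ for all } t \in [0,T]\Bigr\},
\]
where $\eps = |\log|\log\alpha||^{-1/4}$. By \eqref{eq:S0} and continuity of $\cS(t)$, $T_1>0$; and since $1/10 < 1/8$ and $1/7 + 2\eps < 1/6$ for $\alpha \le \alpha_0$ small, the bootstrap bounds are strictly interior to those defining $T'$ in \eqref{eq:T'}, so $T_1 \le T'$ and on $[0,T_1]$ all the localized elliptic estimates of Proposition \ref{lem:localized-ell} and the remainder bounds of Corollary \ref{cor:rem3D} apply. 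The goal is to improve these bounds strictly on $[0,T_1]$, forcing $T_1 = T^*(\alpha)$.

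Since $(ru_\theta^\aa)^2$ solves the pure transport equation \eqref{eq:utheta2}, its support is exactly transported by the flow $\dot X = (u_r^\aa, u_z^\aa)(t,X)$. The source term $-r^{-4}\de_z(ru_\theta^\aa)^2$ in the equation for $\omega_\theta^\aa/r$ is supported in $\supp((ru_\theta^\aa)^2)$, so the joint support of $(\omega_\theta^\aa, u_\theta^\aa)(t,\cdot)$ is contained in the image under $X(t,\cdot)$ of the initial joint support. It therefore suffices to track, along any trajectory starting in $\supp(\omega_\theta^\aa(0,\cdot)) \cup \supp(u_\theta^\aa(0,\cdot))$, the evolution of $\rho(t) = \sqrt{(r(t)-1)^2 + z(t)^2}$, which satisfies
\[
\frac{d\rho}{dt} = u_r^\aa \cos\beta + u_z^\aa \sin\beta.
\]
Using \eqref{eq:uru3}, the decomposition $\Omega = -\Ome + \Omega_r$, and $R^{1/\alpha} = \rho$, together with the elliptic estimates of Proposition \ref{lem:localized-ell} and the Sobolev embedding $\cH^N \hookrightarrow \cW^{1,\infty}$ for $N \ge 4$ (see \cite[Corollary 8.3]{tarek1}), one obtains the pointwise bound
\[
\left|\frac{d\rho}{dt}\right| \lesssim \rho\left(\frac{|\cL_{12}(\Ome)|}{\alpha} + \frac{\|\Omega_r\|_{\cH^N}}{\alpha}\right).
\]

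Integrating via Gronwall yields
\[
\log\left(\frac{\rho(t)}{\rho(0)}\right) \lesssim \int_0^t \frac{|\cL_{12}(\Ome(\tau))|}{\alpha}\, d\tau + \int_0^t \frac{\|\Omega_r(\tau)\|_{\cH^N}}{\alpha}\, d\tau.
\]
The first integral is $\lesssim \log|\log\alpha|^\mu$ for some $\mu < 1$ by the adaptation of Lemma \ref{prop:LOM} to the 3d kernel \eqref{eq:psiapp3d} applied to the explicit formula \eqref{eq: g explicit}; the second is bounded by $T^*(\alpha)\sqrt{\alpha}/\alpha = \sqrt{\alpha}\log|\log\alpha|/C_{N+1}$, which is negligible. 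Hence
\[
R(t) = \rho(t)^\alpha \le R(0)\exp(C\alpha\log|\log\alpha|^\mu),
\]
and since $\alpha\log|\log\alpha|^\mu = o(\eps)$ as $\alpha \to 0$, the outer bound improves to $R(t) \le 1/7 + \eps + o(\eps) < 1/7 + 2\eps$. Applying the same estimate in reverse (lower bound on $\rho$ along the trajectory realizing the outer radius), $R(t) \ge (1/8 + \eps)(1 - o(\eps)) > 1/8$, strictly improving the lower bootstrap bound as well. This closes the bootstrap, giving $T_1 = T^*(\alpha)$. For the final claim, choosing for instance $\lambda = 4/9 \in (1/7 + 2\eps,\, 1/2)$, one has $\lambda^{-1/\alpha}\cS(t) \le \bigl((1/7 + 2\eps)/\lambda\bigr)^{1/\alpha}$, which is exponentially small in $1/\alpha$ and hence $\lesssim \alpha$.

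The main obstacle is controlling the true velocity $(u_r^\aa, u_z^\aa)$ uniformly on $[0, T^*(\alpha)]$: the Biot--Savart map is singular at $r=0$, so the elliptic estimates of Proposition \ref{lem:localized-ell} can only be invoked provided the support stays away from the axis, which is precisely what the bootstrap is designed to verify. The circular dependency is broken because the quantitative growth factor $\exp(\alpha\log|\log\alpha|^\mu) \to 1$ strictly faster than $\eps = |\log|\log\alpha||^{-1/4}$ shrinks, leaving the required margin to strictly improve the bootstrap bounds.
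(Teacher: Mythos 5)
Your proof is correct and takes essentially the same route as the paper's: track Lagrangian trajectories of the flow, decompose the stream function via Elgindi's formula and $\Omega$ into $\Ome$ and $\Omega_r$, control the $\Ome$ contribution by the explicit solution formula (adapted \eqref{eq:g-upperandlower}) and the $\Omega_r$ contribution by Corollary \ref{cor:rem3D}, then close via a continuity/bootstrap argument. The only cosmetic differences are that you track the unscaled radius $\rho(t)$ and then convert to $R(t)=\rho(t)^{\alpha}$ (which introduces the compensating factors of $\alpha$ and $1/\alpha$), whereas the paper tracks $R(t)$ directly through $\dot R=-\alpha R\,\de_\beta\Psi$; and you make the bootstrap structure explicit by defining $T_1$, which the paper handles implicitly through the definition of $T'$. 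Your intermediate bound $\int_0^t \alpha^{-1}|\cL_{12}(\Ome)|\,d\tau\lesssim \log|\log\alpha|^{\mu}$ is not tight (the adaptation of \eqref{eq:g-upperandlower} with $T^*(\alpha)=\alpha\log|\log\alpha|/C_{N+1}$ actually gives $\lesssim\log\bigl(1+|\log|\log\alpha||^{1/2}\bigr)\lesssim\log\log|\log\alpha|$), but this only strengthens your conclusion $\alpha\log(\rho(t)/\rho(0))=o(\eps)$, so the bootstrap still closes.
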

As an immediate consequence, we note that the assumptions of Proposition \ref{lem:localized-ell} are satisfied until time $T^*(\alpha)$, so that the remainder terms can be controlled within that time interval.

\begin{proof}
We represent the flow defined in \eqref{eq:flow} in terms of the polar coordinates $X(t)=(R(t),\beta(t))^t$. Consider the equation for $(r u_\theta^\aa)^2$ in \eqref{eq:utheta2}. From \eqref{eq:der-3D}-\eqref{eq:uru3}, writing more carefully the transport term \eqref{eq:transport3D}, one obtains 
\begin{align}\label{eq:transport-polar}
    u_r \de_r + u_z \de_z&= (-\alpha R \de_\beta \Psi)\de_R +(2\Psi+\alpha R \de_R \Psi)\de_\beta.
\end{align}
We want to rely on Lemma \ref{lem:localized-ell} by means of a bootstrap argument. To this end, we work in the time interval $[0,T']$, where $T'$ is defined in \eqref{eq:T'}. Within such times, 
one has in particular that $\lambda^{-\frac 1 \alpha} \cS(t)\lesssim \alpha $ for some $\lambda > 0$ and $\alpha \le \alpha_0 \ll 1 $ small enough. From  \eqref{eq:transport-polar} (one can compare with \cite[(9.66) p. 64]{chen2022}), it follows that
\begin{align*}
    \dot R(t) &= -\alpha R \de_\beta \Psi (\Omega)(R(t), \beta(t)). 
\end{align*}
Then, we can apply Theorem \ref{thm:tarek3D}, yielding
\begin{align}
    \dot R(t) &= -\alpha R \de_\beta \Psi =-R\left(\frac{\cL_{12}(\Omega)}{2}+2\alpha \mathcal{R}^\alpha (\Omega)\right) \cos (2\beta)-\alpha R\de_\beta \hE(\Omega)\notag\\
    &=-R\left(\frac{\cL_{12}(\Ome)}{2}+2\alpha \mathcal{R}^\alpha (\Ome)\right) \cos (2\beta) -R\left(\frac{\cL_{12}(\Omega_r)}{2}+2\alpha \mathcal{R}^\alpha (\Omega_r)\right) \cos (2\beta)\notag\\
    &\quad - \alpha R  \de_\beta \hE(\Ome)- \alpha R  \de_\beta \hE(\Omega_r)\notag.
\end{align}
This gives 
\begin{align}
    \log \left(\frac{R(t)}{R(0)}\right) &=- \int_0^t  \frac{\cL_{12}(\Ome(\tau))}{2} \cos(2\beta) \, d\tau \notag\\&\quad - \int_0^t  \frac{\cL_{12}(\Omega_r(\tau))}{2} \cos(2\beta) \, d\tau - \alpha \int_0^t (2\mathcal{R}(\Ome+\Omega_r)\cos (2\beta) +\de_\beta \hE(\Ome+\Omega_r))(\tau) \, d\tau.\notag
\end{align}
Appealing now to the explicit formula of $\Ome$ in \eqref{eq: g explicit} and estimate \eqref{eq:g-upperandlower} with $g(t)=\Ome(t)$, for $ t \in [0, T^*(\alpha)]$, yields
\begin{align}
     \log \left(\frac{R(t)}{R(0)}\right)
    &\lesssim \alpha\log \left(1+\frac{c}{2\alpha}t\right) + \sqrt \alpha + \left|\int_0^t (\cL_{12}+\alpha \mathcal{R}^\alpha+\de_\beta \hE)(\Omega_r(\tau))\, d\tau\right|, \label{eq:est-R}
\end{align}
where we used Theorem \ref{thm:tarek3D}, Proposition \ref{lem:localized-ell} and \eqref{eq:est-app}, from which, in particular, it follows that
\begin{align*}
    \alpha \left| \int_0^t (\mathcal{R}^\alpha +\de_\beta \hE)(\Ome(\tau)) \, d\tau\right|\le 2 \alpha t \|\Ome\|_{\cH^N}  \le  \sqrt \alpha, \quad t \in [0, T^*(\alpha)].
\end{align*}
To handle the last term of the right-hand side of \eqref{eq:est-R}, we use Corollary \ref{cor:rem3D}, in particular
\begin{align}
    \|\Omega_r(t)\|_{L^\infty} \le 4\sqrt \alpha.
\end{align}
Applying the same reasoning as before this yields, for $\alpha $ small enough, that 
\begin{align*}
    R(0) + \sqrt \alpha \le \sup_{t \in [0, T^*(\alpha)]} R(t)\le R(0)+2\sqrt \alpha \quad \Rightarrow \quad \left(\frac{1}{8}\right)^\frac 1 \alpha  \le  \cS(t)  \le \left(\frac 17 + 2\eps\right)^\frac 1 \alpha,
\end{align*}
since $\eps \sim |\log|\log|\alpha||^{-\frac 14} \gg \sqrt \alpha$.
Now we turn to $\left(\frac{\omega_\theta^\aa}{r}\right)$ in \eqref{eq:3Daxisymm}, satisfying the equation below:
\begin{align*}
    (\de_t+u_r^\aa\de_r + u_z^\aa \de_z)\left(\frac{\omega_\theta^\aa}{r}\right)=-\frac{1}{r^4}\de_z(ru_\theta^\aa)^2. 
\end{align*}
The transport part is the same as before, the difference is the presence of the right-hand side.
We can rewrite the equation for $\omega_\theta^\aa$ as follows:
\begin{align}
    \frac{d}{dt}\omega_\theta^\aa(t, X(t, x_0)) = - \frac{1}{r^3} \de_z (r u_\theta^\aa (t, X(t, x_0)) )^2.
\end{align}
We already know from the above discussion that  $\left(\frac 18\right)^\frac 1 \alpha \lesssim \supp_{\rho} (ru_\theta^\aa)^2 \lesssim  \left(\frac 17\right)^\frac 1 \alpha$. Reasoning as before and solving the Liouville equation ensures that $\left(\frac 18\right)^\frac 1 \alpha \lesssim \cS(t) \lesssim  \left(\frac 17\right)^\frac 1 \alpha$.
\end{proof}

\subsection{Proof of Theorem \ref{thm:main3sec3}}
With all these ingredients, the proof of Theorem \ref{thm:main3sec3} is easily deduced. By definition of $\eta(\cdot, R, \beta)=\de_r (ru_\theta^\aa)^2$, it follows that
\begin{align*}
    \frac{\eta}{2r^2}=\frac{\de_r(r u_\theta^\aa)^2}{r^2}=\frac{ (u_\theta^\aa)^2}{r}+\frac{1}{2}\de_r (u_\theta^\aa)^2.
\end{align*}
Consider now the approximate solution $(\Ome (t), \et (t), \csi (t))$. Using the remainder estimates in Section \ref{sec:remainder3D} and the localization of the support in Section \ref{sec:support}, in Case (i) we have that 
\begin{align*}
    \sup_{t \in [0, T^*(\alpha)]} \left\| \frac{(u_\theta^\aa)^2}{r}+\frac12\de_r (u_\theta^\aa)^2\right\|_{L^\infty}& \gtrsim \left\| \et \right\|_{L^\infty}-\sqrt \alpha \gtrsim \|\eta_{0, \mathrm{app}}^{\alpha, \delta}\|_{L^\infty} \left(1+\frac{\log |\log \alpha|}{C_{N+1}}\right)^{\frac{1}{c_2}},
\end{align*}
from which \eqref{eq:crucial-inequality} follows. Similarly, in Case (ii) one obtains 
\begin{align*}
    \sup_{t \in [0, T^*(\alpha)]} \left\| \de_z (u_\theta^\aa)^2\right\|_{L^\infty}&  \gtrsim \|\xi_{0, \mathrm{app}}^{\alpha, \delta}\|_{L^\infty} \left(1+\frac{\log |\log \alpha|}{C_{N+1}}\right)^{\frac{1}{c_2}}.
\end{align*}
The proof is concluded.
%%%%%
\section*{Acknowledgment}
RB thanks Tarek M. Elgindi for several helpful discussions and suggestions, especially regarding the application to the 3D axisymmetric Euler equations. 
The authors thank the anonymous referees for their helpful comments and suggestions.
RB is partially supported by the GNAMPA group of INdAM, the Italian Ministry of University and Research, PRIN 2020 entitled ``PDEs, fluid dynamics and transport equation'' and PRIN 2022HSSYPN TESEO, NextGenEU. \\
LEH is funded by the Deutsche Forschungsgemeinschaft (DFG, German Research Foundation) – Project-ID 317210226 – SFB 1283.\\
FI is partially supported by  PRIN project 2017JPCAPN entitled ``Qualitative and quantitative aspects of nonlinear PDEs."

\bibliographystyle{siam}
\bibliography{biblio}

\end{document}